\documentclass[12pt]{amsart}

\usepackage[
text={440pt,575pt},
headheight=9pt,
centering
]{geometry}

\usepackage{amsmath, amssymb, amsthm, enumerate, bm}
\usepackage{mathtools}
\usepackage{dsfont}

\allowdisplaybreaks 

\usepackage{mathptmx}

\usepackage{caption}
\captionsetup{labelfont=small}

\usepackage{tabularx}
\newcolumntype{x}[1]{!{\centering\arraybackslash\vrule width #1}}
\usepackage{booktabs}
\setlength{\aboverulesep}{0pt}
\setlength{\belowrulesep}{0pt}

\usepackage{xcolor}
\usepackage{hyperref}
\definecolor{darkblue}{RGB}{0,0,160}
\hypersetup{
colorlinks,%
citecolor=darkblue,%
filecolor=black,%
linkcolor=darkblue,%
urlcolor=darkblue
}

\usepackage{todonotes}
\setlength{\marginparwidth}{2 cm}
%
%




\newcolumntype{E}{>{\footnotesize \selectfont}l<{}} 
\newcolumntype{F}{>{\small\selectfont}l<{}}

\newcommand{\C}{\mathbb{C}}

\newcommand{\N}{\mathbb{N}}

\newcommand{\R}{\mathbb{R}}

\newcommand{\V}{\mathbb{V}}

\newcommand{\Nc}{\mathcal{N}}

\DeclareMathOperator{\diag}{diag}

\DeclareMathOperator{\pnt}{\raise 0.5mm \hbox{\large\bf.}}

\newtheoremstyle{thm}{}{}
     {\em}
     {}
     {\bf}
     {.}
     {0.5em}
     {\thmname{#1}\thmnumber{ #2}\thmnote{ #3}}

\newtheoremstyle{def}{}{}
     {\rm}
     {}
     {\bf}
     {.}
     {0.5em}
     {\thmname{#1}\thmnumber{ #2}\thmnote{ #3}}

\theoremstyle{thm}

\newtheorem{thm}{Theorem}[section]
\newtheorem{lem}[thm]{Lemma}
\newtheorem{cor}[thm]{Corollary}
\newtheorem{prop}[thm]{Proposition}
\newtheorem{conj}[thm]{Conjecture}

\theoremstyle{def}

\newtheorem{rem}[thm]{Remark}
\newtheorem{exa}[thm]{Example}
\newtheorem{Def}[thm]{Definition}
\newtheorem{Alg}[thm]{Algorithm}
\newtheorem{Not}[thm]{Notation}

\let\epsilon=\varepsilon
\let\phi=\varphi
\let\kappa=\varkappa

\makeatletter
\@namedef{subjclassname@2020}{%
  \textup{2020} Mathematics Subject Classification}
\makeatother

\title[Covariance matrices of length power functionals
of random geometric graphs]{
Covariance matrices of length power functionals
of random geometric graphs -- an asymptotic analysis
}

\author{Matthias Reitzner, Tim R\"omer, and Mandala von Westenholz}

\subjclass[2020]{Primary: 15B52, 60D05; Secondary: 05C80, 60G55}

\keywords{covariance matrix, length power functional, Poisson point process}

\begin{document}
\begin{abstract}
Asymptotic properties of a vector of length power functionals of random geometric graphs are investigated. More precisely, its asymptotic covariance matrix is studied as the intensity of the underlying homogeneous Poisson point process increases. This includes a systematic discussion of matrix properties like rank, definiteness, determinant, eigenspaces or decompositions of interest. 
For the formulation of the results a case distinction is necessary. Indeed, in the three possible regimes the respective covariance matrix is of quite different nature 
which leads to different statements. Finally, stochastic  consequences for random geometric graphs are derived.
\end{abstract}

\maketitle

\section{Introduction} 
\label{Kap: Regimes}

Studying data seen as point clouds is of great interest in data analysis, machine learning, and related fields. One very interesting recent approach is topological data analysis where geometric as well as topological information is investigated for applications like the use of persistent homology or visualization methods (see, e.g., \cite{Carlsson}, \cite{CarlssonZomorodian}, \cite{EdelsbrunnerHarer}, \cite{EdelsbrunnerLetscher}, \cite{Wasserman}). In particular, in the aforementioned areas, simplicial complexes and their properties are of significance like Vietoris-Rips as well as C\v{e}ch complexes (see \cite{EdelsbrunnerHarer}). Their common $1$-skeleton is a geometric graph and is one of the main objects in the following studies. 

More precisely, given a point cloud $X\subseteq \R^d$ and a given value $\delta$, the geometric graph $\mathcal{G}(X, \delta)$ is defined as follows. Vertices are the points from $X$ and there exists an edge between two such points if and only if the distance between them is smaller or equal to $\delta$.

A central idea of stochastic geometry to study generic behaviours of objects of interest is the use of point processes to construct random ones. In particular, this yields  random simplicial complexes and geometric graphs (see, e.g., \cite{Adler}, \cite{AkinwandeReitzner} \cite{BobrowskiKahle}, \cite{Grygiereketal2020}, \cite{Kahle}, \cite{Penrose}) , where the vertices originate from a \emph{Poisson point process} $\eta$ (see \cite{LastPenrose}). Such graphs have been studied from various points of views (see, e.g., \cite{Reitzner}). Note that here the original approach is due to Gilbert for modelling networks \cite{Gilbert}. 

In this paper we consider a sequence of homogeneous Poisson point processes $\eta_t$ with \emph{intensity} $t$. 
It is always assumed that the processes are supported in a convex compact set $W \subseteq \R^d$. 
Given the processes $\eta_t$ and a sequence of distances $\delta_t$, we study asymptotic behaviours
of the induced random graphs $\mathcal{G}(\eta_t, \delta_t)$ as $\delta_t \rightarrow 0$ for $t \rightarrow \infty$. 

Recall, that in the random geometric graph $\mathcal{G}(\eta_t, \delta_t)$ there exists an edge between vertices $x, y \in \eta_t$ with $x\neq y$ if and only if  
$
\parallel x -y \parallel \leq \delta_t.
$
In the past, Poisson functionals have been investigated at various places  (see, e.g., \cite{Peccati1}, \cite{Peccati} or \cite{Yoshifusa}). For example, research work about a length power functional $L_t^{(\tau)}$ of a random geometric graph has been carried out. In particular, Reitzner--Schulte--Thäle \cite{Reitzner}  studied certain algebraic properties of the \emph{asymptotic covariance matrix} $\Sigma_n$ of a vector $(\tilde{L}_t^{(\tau_1)}, \ldots,\, \tilde{L}_t^{(\tau_n)})$ of  normalized and centered length power functionals. Taking especially this work as a motivation the main goals of this paper are:

\begin{enumerate} 
\item Expanding the analysis of $\Sigma_n$ as extensive as possible concerning algebraic aspects like rank, definiteness, determinant, eigenspaces and selected decompositions.
\item Using the gained algebraic results in (i) for a stochastic analysis on the entry wise length power functionals in the considered vector. 
\end{enumerate}
 
In dependence to the asymptotic behavior of the term $t\delta_t^d$, which occurs as the expected degree of a vertex in the random geometric graph, the following regimes arise:
\begin{itemize}
\item \emph{subcritical (sparse) regime}  for $t\delta_t^d \xrightarrow{t \rightarrow \infty} 0$,
\item \emph{critical (thermodynamic) regime}  for $t\delta_t^d \xrightarrow{t \rightarrow \infty} c \in \R_{> 0}$,
\item \emph{supercritical (dense) regime} for $t\delta_t^d \xrightarrow{t \rightarrow \infty}  \infty$.
\end{itemize} 

The structure of the covariance matrix $\Sigma_n$ differs significantly in each regime. This is introduced in Section \ref{Kap: Poisson-Funktionale}. Remarkably, the matrix in the critical regime can be decomposed into the corresponding matrices of the other two regimes. 
For the convenience of the reader we summarize in Section \ref{Kap: summary} all main results of the manuscript. The asymptotic analysis needs tools related to symmetric positive semidefinite matrices, which are discussed in Section \ref{Kap: Positive semidefinite symmetric matrices}. 
The core part of our investigations is started 
right afterwards. Due to the different matrix structures in the regimes, there are separate examinations in Sections \ref{Kap: Superkrit. Reg.}--\ref{Kap: Krit. Reg.}. Every question of interest is answered in the supercritical regime. 
Beside substantial progress, in the other regimes there remain also open problems. Cross-regime observations related to decompositions are discussed in Section \ref{Kap. Regimeunabhaengige Untersuchung II}. 
In Section \ref{Kap: stoch. appl.} we apply our results to the original stochastic situation. It is not too difficult to show 
(see Theorem \ref{thm:stochconv-dense}) that in the supercritical regime
\[
\tilde L_t^{(\tau)}  - \frac d{\tau+d} \tilde L_t^{(0)} \stackrel{\mathbb{P}}{\to} 0
\text{ as } t \to \infty.
\]
Thus, asymptotically the number of edges $ \tilde L_t^{(0)} $ determines all other length power functionals. More involved are the investigations in the
other regimes. In Theorem \ref{thm:stochconv-(sub)crit} we prove that
\[
\tilde L_t^{(\tau)}  - \frac d{\tau+d} \tilde L_t^{(0)} \stackrel{\mathbb{P}}{\to} \frac{1}{\max\{\sqrt c,1\}}Z
\text{ as } 
t \to \infty,
\]
where $Z$ is a normal distributed random variable independent of $\tilde L_t^{(0)}$. Hence, the length power functionals asymptotically decompose into the number of edges and some independent noise.
Finally, in Section \ref{Kap: Resumee} open questions 
and possible strategies to solve them 
are discussed.

\section{Preliminaries} \label{Kap: Poisson-Funktionale}
A Poisson point process is always given as $\eta_t \colon \Omega  \rightarrow \mathbf{N}(W), \,\omega \mapsto \sum_{i=1}^{N(\omega )} \delta_{X_i(\omega)}$, where 
$W \subseteq \R^d$ is a convex compact set, 
$X_i \colon \Omega \rightarrow W$ for $i \in [N(\omega)]$
as well as $N \colon \Omega \rightarrow \N$ are random variables, and 
$\mathbf{N}(W)$ is the set of finite counting measures on $W$. 
Here we also write $[n]=\{1,\dots,n\}$ for $n\in \N$.
Throughout the paper we use: 

\begin{Not} 
\label{Not: PPP} \
\begin{enumerate}
\item The notation $\eta_t(\omega)$ means the counting measure  $\sum_{i=1}^{N(\omega)} \delta_{X_i(\omega)}$ and simultaneously also the point set $\{X_1(\omega), \ldots, \,X_{N(\omega)}(\omega)\}$ understood as the support of the measure.
\item Realizations are simply written as $\eta_t=\eta_t(\omega)$, 
$ x_i=X_i(\omega)$, and $N=N(\omega)$.
\item Given a Borel set $A\in \mathcal{B}(W)$
the abbreviation 
$\eta_t(A)=\eta_t(\omega)(A)=\sum_{i=1}^N \delta_{x_i}(A)$ is induced by (ii) and is the number of random points in $A$.
If we identify $\eta_t$ with its support, then 
$\eta_t\cap A=\eta_t(\omega)\cap A=
\{x_1,\ldots, \,x_n\}\cap A$ stands for the points lying in $A$.
\item $\eta_{t, \,\neq}^m (\omega)= \{ (x_{1}, \ldots, \,x_{m}) \mid 
x_i =X_{j_i}(\omega)  \text{ pairwise distinct}
\text{ for }
j_i\in [N(\omega)]\}$ 
is the set of all $m$-tuples of pairwise distinct points in $\eta_t$. We also write $\eta_{t, \,\neq}^m$ for $\eta_{t, \, \neq}^m(\omega)$.
\end{enumerate}  
\end{Not}

The discussion in the following is based on  \textit{Poisson functionals} in $W$. Such a functional with an underlying Poisson point process $\eta \colon \Omega \rightarrow \mathbf{N}(W)$ is a random variable
\[F \colon \Omega \rightarrow \R
\text{ with a decomposition }
F = f\circ \eta \hspace{4pt} \text{ $\mathbb{P}$-a.s}.
\] 
for a measurable function $f \colon  \mathbf{N}(W) \rightarrow \R$. For further details see, e.g., \cite{SchneiderWeil}.
In particular, applications in the context of our work are discussed
in
\cite{AkinwandeReitzner}, \cite{Grygierek}, \cite{Peccati1}, and \cite{Reitzner}. As above we also shorten $F(\omega) = f(\eta(\omega))$ by $F = f(\eta)$. The central Poisson functional of this paper is the so-called \textit{length power functional}
\begin{align}
\label{Eq: Laengen-Potenz-Funktional}
L_t^{(\tau)}=L^{(\tau)}(\mathcal{G}( \eta_t, \, \delta_t))= \frac{1}{2} \sum_{(x_1,\, x_2) \in \eta_{t,\, \neq}^2} \mathds{1}( \parallel x_1- x_2\parallel \leq \delta_t ) \parallel x_1-x_2 \parallel^{\tau}
\text{ for }  \tau \in \R,
\end{align} 
where here and in the following $\mathds{1}( \cdot)$ is an indicator function. The functional sums up the $\tau$-th powers of the edge lengths in the random geometric graph $\mathcal{G}( \eta_t, \, \delta_t)$. For $\tau = 1$ this corresponds to the total edge length and for $\tau =0$ to the number of edges in the random geometric graph. Note that this number and generalizations of it were already investigated in \cite{Grygierek} from an stochastic point of view. Our main goal is an extensive examination of the asymptotic covariance matrix of a functional vector 
\[(\tilde{L}_t^{( \tau_1)}, \ldots ,\,\tilde{L}_t^{(\tau_n)})\] 
for powers 
$-d/2<\tau_1 < \ldots< \tau_n$ of \textit{centered and normalized length power functionals}  
\begin{align} \label{Eq: normiertes, zentriertes Laengen Potenz Funktional}
\tilde{L}_t^{(\tau)} :=\frac{\left(L_t^{(\tau)} - \mathbb{E}(L_t^{(\tau)})\right)} {\max\{t\delta_t^{\tau+d/2}, \,t^{3/2}\delta_t^{\tau +d }\}}.
\end{align} 

This matrix was explicitly determined 
in \cite[Thm. 3.3]{Reitzner}  by Reitzner--Schulte--Thäle:

\begin{thm}[(\cite{Reitzner})]
\label{Satz: Kov.matrix Laengen Potenz Funktional}
Let $ -d/2<\tau_1 < \ldots < \tau_n$. The random vector 
$(\tilde{L}_t^{(\tau_1)}, \ldots , \,\tilde{L}_t^{(\tau_n)})$ has the asymptotic covariance matrix  
\begin{align}
\Sigma_n = \begin{cases}  
\Sigma_n^{\emph{sb}} &\text{ for } \, \lim\limits_{t \rightarrow \infty} t\delta_t^d =0,  \\
\Sigma_n^{\emph{sb}} + c \Sigma_n^{\emph{sp}}& \text{ for } \,\lim\limits_{t \rightarrow \infty} t\delta_t^d =c \in (0, \, 1],   \\
\frac{1}{c}\Sigma_n^{\emph{sb}}+ \Sigma_n ^{\emph{sp}} & \text{ for }\,\lim\limits_{t \rightarrow \infty} t\delta_t^d \in (1, \,\infty), \\
\Sigma_n^{\emph{sp}} &\text{ for }  \,\lim\limits_{t \rightarrow \infty} t\delta_t^d =\infty \\
\end{cases} \label{Eq: Kov.matrix Laengen Potenz Funktional} 
\end{align} 

with  
$\Sigma_n^{\emph{sb}} := 
V(W) \left(\frac{d\cdot \kappa_d}{2( \tau_i + \tau_j +d) }\right), \, \Sigma_n^{\emph{sp}} := V(W) \left(\frac{d^2\cdot \kappa_d^2}{(\tau_i +d) (\tau_j +d)}\right) \in \R^{n \times n}$
where $V(W)$ and $\kappa_d$
are the volumes of $W$  and of the $d$-dimensional unit ball (w.r.t. the Lebesgue measure).
\end{thm}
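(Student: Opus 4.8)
The plan is to recover $\Sigma_n$ by computing $\lim_{t\to\infty}\mathrm{Cov}\big(\tilde L_t^{(\tau_i)},\tilde L_t^{(\tau_j)}\big)$ directly from the Poisson $U$-statistic structure of the length power functionals, and then to carry out the case distinction by tracking which of the two competing normalizing factors in \eqref{Eq: normiertes, zentriertes Laengen Potenz Funktional} is asymptotically larger. Concretely, writing $g_\tau(x,y)=\mathds{1}(\|x-y\|\le\delta_t)\|x-y\|^\tau$ one has $L_t^{(\tau)}=\sum_{\{x,y\}\subseteq\eta_t}g_\tau(x,y)$, so the multivariate Mecke equation gives $\mathbb{E}(L_t^{(\tau)})=\tfrac{t^2}{2}\int_{W^2}g_\tau(x,y)\,d(x,y)$, and the Fock space (chaos) representation of the covariance of two Poisson $U$-statistics of order two yields, with $G_\tau(x):=\int_W g_\tau(x,y)\,dy$,
\[
\mathrm{Cov}\big(L_t^{(\tau_i)},L_t^{(\tau_j)}\big)=t^{3}\!\int_W G_{\tau_i}(x)G_{\tau_j}(x)\,dx+\frac{t^{2}}{2}\!\int_{W^2}g_{\tau_i}(x,y)g_{\tau_j}(x,y)\,d(x,y),
\]
the first summand being the first-order (``shared vertex'') chaos term and the second the second-order (``shared edge'') chaos term; all higher chaoses vanish because the kernel has only two arguments.

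Next I would estimate the two integrals as $\delta_t\to0$. By convexity and compactness of $W$, for every $x$ at distance $>\delta_t$ from $\partial W$ one has $G_\tau(x)=d\kappa_d\int_0^{\delta_t}r^{\tau+d-1}\,dr=\tfrac{d\kappa_d}{\tau+d}\delta_t^{\tau+d}$, while in general $0\le G_\tau(x)\le\tfrac{d\kappa_d}{\tau+d}\delta_t^{\tau+d}$; the hypothesis $-d/2<\tau_i$ forces $\tau_i+d>0$ and $\tau_i+\tau_j+d>0$, so every integral converges. Since the $\delta_t$-neighbourhood of $\partial W$ has Lebesgue measure $O(\delta_t)$, its contribution to both integrals is of strictly lower order, hence
\[
\int_W G_{\tau_i}G_{\tau_j}\,dx=V(W)\frac{d^{2}\kappa_d^{2}}{(\tau_i+d)(\tau_j+d)}\,\delta_t^{\tau_i+\tau_j+2d}(1+o(1)),
\]
\[
\int_{W^2}g_{\tau_i}g_{\tau_j}\,d(x,y)=V(W)\frac{d\kappa_d}{\tau_i+\tau_j+d}\,\delta_t^{\tau_i+\tau_j+d}(1+o(1)).
\]

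Finally I would divide by $\max\{t\delta_t^{\tau_i+d/2},t^{3/2}\delta_t^{\tau_i+d}\}\cdot\max\{t\delta_t^{\tau_j+d/2},t^{3/2}\delta_t^{\tau_j+d}\}$ and use the identities $t\delta_t^{\tau+d/2}\cdot t\delta_t^{\tau'+d/2}=t^{2}\delta_t^{\tau+\tau'+d}$, $\ t^{3/2}\delta_t^{\tau+d}\cdot t^{3/2}\delta_t^{\tau'+d}=t^{3}\delta_t^{\tau+\tau'+2d}$, and $\big(t\delta_t^{\tau+d/2}\big)\big/\big(t^{3/2}\delta_t^{\tau+d}\big)=(t\delta_t^d)^{-1/2}$. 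These show that the normalization is governed entirely by $\lim(t\delta_t^d)$: when $t\delta_t^d\to0$ or $\to c\in(0,1]$ the first factor dominates and the normalized covariance tends to $\Sigma_n^{\mathrm{sb}}+(\lim t\delta_t^d)\,\Sigma_n^{\mathrm{sp}}$, i.e.\ $\Sigma_n^{\mathrm{sb}}$ resp.\ $\Sigma_n^{\mathrm{sb}}+c\,\Sigma_n^{\mathrm{sp}}$; when $t\delta_t^d\to c\in(1,\infty)$ or $\to\infty$ the second factor dominates and one obtains $(\lim t\delta_t^d)^{-1}\Sigma_n^{\mathrm{sb}}+\Sigma_n^{\mathrm{sp}}$, i.e.\ $\tfrac1c\Sigma_n^{\mathrm{sb}}+\Sigma_n^{\mathrm{sp}}$ resp.\ $\Sigma_n^{\mathrm{sp}}$ (at $c=1$ both expressions agree, which is why the split is placed there, and there the possibly oscillating choice of maximum is immaterial). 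The step using Mecke and the Fock representation is routine and the case distinction is purely arithmetic; the main obstacle is the bookkeeping in the second step, namely verifying uniformly over the relevant range of exponents that the boundary-layer errors are genuinely lower order and that no intermediate-scale term survives the normalization, so that the limiting matrix is exactly the stated one with no additional summand.
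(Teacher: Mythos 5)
The paper does not actually prove this theorem --- it is imported verbatim from Reitzner--Schulte--Th\"ale \cite{Reitzner} (Theorem 3.3 there) --- so there is no internal proof to compare against; your argument is, however, a correct reconstruction of the proof in that reference, which likewise computes the covariance exactly via the (finite) Wiener--It\^o chaos expansion of second-order Poisson $U$-statistics and then tracks which of the two normalizations in \eqref{Eq: normiertes, zentriertes Laengen Potenz Funktional} dominates. The only points worth making fully explicit are that the active branch of the maximum is governed by $(t\delta_t^d)^{-1/2}$ alone and hence is the same for every index $i$ simultaneously (so no mixed-denominator case arises), and that the boundary layer has volume $O(\delta_t)$ because $W$ is convex and compact, so its contribution is of relative order $o(1)$ --- both of which you essentially note.
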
 

For example, for $n=1$ one obtains $\Sigma_n=(\sigma_{11})$ 
for a certain constant $\sigma_{11}>0$
described above. All questions studied in this paper are trivial in this case. Therefore we assume $n\geq 2$ in the remaining discussion.
$\Sigma_n$ is symmetric, positive semidefinite and  hence diagonalizable. We denote its real eigenvalues sorted and in increasing order by $
\lambda_1\leq \ldots\leq \lambda_n.
$

\section{Main results} \label{Kap: summary}

The following tables present an overview of our algebraic results for the asymptotic covariance matrix $\Sigma_n$ for $n\geq 2$, which are described in each regime. Open cases are also mentioned. 
In Sections \ref{Kap: Superkrit. Reg.}--\ref{Kap: Krit. Reg.}
first properties are discussed regime-dependent
and include rank, determinant, definiteness, and eigenspaces. 
A summary is:
 
\begin{center}
\captionsetup{type=table}
\begin{tabular}[h]{| F x{1pt}E|E|E|}

\toprule    

&\small{\textbf{supercritical regime}} & \small{\textbf{subcritical regime}}  & \small{\textbf{critical regime} }\\
\toprule

\textbf{rank}&$\text{rank}(\Sigma_n)=1$ (\ref{Kor: Rang superkrit. Reg.}(i))& maximal rank (\ref{Kor: Det. Kov.matrix subkrit. Reg.}(i))&maximal rank (\ref{Satz: pos. Def., Inv., Det. krit. Reg.}(i))\\  
\midrule
 
\textbf{invertibility}& singular (\ref{Kor: Rang superkrit. Reg.})&regular (\ref{Kor: Det. Kov.matrix subkrit. Reg.}), &regular (\ref{Satz: pos. Def., Inv., Det. krit. Reg.}),\\

&&inverse in \ref{Satz: Inv. subkrit. Reg.}&inverse in \ref{Satz: Inverse krit. Regime}\\
\midrule
 
\textbf{Jordan}& diagonalizable & diagonalizable& diagonalizable \\

\textbf{normal form}&  & &\\
\midrule 
 
\textbf{determinant} &  $\text{det}(\Sigma_n)=0$ (\ref{Kor: Rang superkrit. Reg.}(ii))& $\text{det}(\Sigma_n)>0$ (\ref{Kor: Det. Kov.matrix subkrit. Reg.}),& $\text{det}(\Sigma_n)>0$ (\ref{Satz: pos. Def., Inv., Det. krit. Reg.}(ii)), \\

&& explicit formula in (\ref{Eq: Det. Sigma subkrit. Reg})&explicit formula in \ref{Satz: Det. krit. Reg.}\\ 
\midrule
 
\textbf{definiteness} & positive semidefinite& positive definite (\ref{Kor: Det. Kov.matrix subkrit. Reg.}(ii)) &positive definite (\ref{Satz: pos. Def., Inv., Det. krit. Reg.}(iii))\\ 
 
& not positive definite (\ref{Kor: Rang superkrit. Reg.}(iii))&& \\
 
\midrule

\textbf{characteristic}& explicit formula in (\ref{Kor: Rang superkrit. Reg.} (iv))&explicit formula in (\ref{Satz: char. Poly. Sigma subkrit. Reg. }) &formula in (\ref{Satz: char. Poly. Sigma krit. Reg.})\\

\textbf{polynomial}&  &&\\

\midrule

\textbf{eigenvalues}& explicit formula in (\ref{Satz: EW.e, ER.e superkrit. Reg.}) &bounds in (\ref{Satz: Schranken EWe Sigma subkrit. Reg.}),&bounds in (\ref{Satz: Schranken EWe krit. Reg.}) and\\

&& explicit formula &(\ref{Bem: Schranken EWe krit. Reg.}),    \\

&&for $n=2$ 
& 
explicit formula \textit{open} \\
\midrule    

\textbf{eigenspaces}& explicit formula in (\ref{Satz: EW.e, ER.e superkrit. Reg.}) & \textit{open }&\textit{open }\\
\bottomrule

\end{tabular}
\caption{\small{first results in all regimes}} \label{Tab: first results in all regimes}
\end{center} 

The supercritical regime is completely solved in the sense that we obtain for every aspect either explicit formulas or qualitative statements. In the other two regimes some questions are still open. We would like to point out that from an qualitative point of view
the results in the subcritical and critical regime coincide, since the critical case is just a rank-$1$-disturbance of the subcritical one. In Sections \ref{Kap: Superkrit. Reg.}--\ref{Kap: Krit. Reg.} we study also matrix decompositions and prove:

\begin{center}
\captionsetup{type=table}
\begin{tabular}[h]{| F x{1pt}E|E|E|} 

\toprule    

&\small{\textbf{supercritical regime}} & \small{\textbf{subcritical regime}}  & \small{\textbf{critical regime} }\\
\toprule

\midrule
\textbf{LU }& explicit formula in (\ref{Satz: LR-Zerleg. superkrit. Reg.}(i)) &explicit formula in (\ref{Satz: LR-Zerleg. subkrit. Reg.})&explicit formula for the \\
\textbf{decomposition}&  & &vector of natural increasing \\
\textbf{}&  & &powers in (\ref{Satz: LR-Zerlegung krit. Reg.})\\

\midrule
\textbf{Cholesky }& explicit formula in (\ref{Satz: LR-Zerleg. superkrit. Reg.}(ii)) &explicit formula in (\ref{Satz: Cholesky-Zerleg. subkrit. Reg.})&explicit formula for the \\
\textbf{decomposition}&  & &vector of natural increasing \\
\textbf{}&  & &powers in (\ref{Satz: Cholesky-Zerleg. krit. Reg.})\\

\midrule
\textbf{Square root }& explicit formula in (\ref{Satz: LR-Zerleg. superkrit. Reg.}(iii)) & \textit{open}& \textit{open} \\

\bottomrule
\end{tabular}
\caption{\small{decompositions in all regimes}}\label{Tab: decompositions in all regimes}
\end{center} 
   
Observe that LU and Cholesky decompositions are derived in the supercritical as well as in the subcritical regime. In the critical regime they are investigated only in the special case of interest, where $\tau_1 =0, \ldots, \tau_n = n-1$ and $d=2$. Significant connections between the matrices and their decompositions are additionally discussed in Section \ref{Kap. Regimeunabhaengige Untersuchung II}. We conclude the paper in Sections \ref{Kap: stoch. appl.} and 
 \ref{Kap: Resumee} with applications and open problems as mentioned before.

\section{Positive semidefinite symmetric matrices} \label{Kap: Positive semidefinite symmetric matrices}

Here we recapitulate facts related to a matrix $\Sigma_n$ of length power functionals (see Theorem \ref{Satz: Kov.matrix Laengen Potenz Funktional}), which are used all over the manuscript. Recall that $\Sigma_n$ has the following structure:
\[\left( \begin{array}{cccc}
 \lim \limits_{t \rightarrow \infty} \V(\tilde{L}_t^{(\tau_1)})&  \lim \limits_{t \rightarrow \infty} \C\text{ov}(\tilde{L}_t^{(\tau_1)}, \, \tilde{L}_t^{(\tau_2)}) &\ldots & \lim \limits_{t \rightarrow \infty} \C\text{ov}(\tilde{L}_t^{(\tau_1)}, \,\tilde{L}_t^{(\tau_n)})\\
 \lim \limits_{t \rightarrow \infty} \C\text{ov}(\tilde{L}_t^{(\tau_2)}, \,\tilde{L}_t^{(\tau_1)})&  \lim \limits_{t \rightarrow \infty} \V (\tilde{L}_t^{(\tau_2)}) &\ldots & \lim \limits_{t \rightarrow \infty} \C\text{ov}(\tilde{L}_t^{(\tau_2)}, \, \tilde{L}_t^{(\tau_n)})\\
\vdots& \vdots  &\hdots &\vdots \\
 \lim \limits_{t \rightarrow \infty} \C\text{ov}(\tilde{L}_t^{(\tau_n)}, \, \tilde{L}_t^{(\tau_1)}) &  \lim \limits_{t \rightarrow \infty} \C\text{ov}(\tilde{L}_t^{(\tau_n)}, \, \tilde{L}_t^{(\tau_2)})&\ldots &  \lim \limits_{t \rightarrow \infty} \V (\tilde{L}_t^{(\tau_n)}) \\ 
\end{array}\right)\in \R^{n \times n}.\] 

A sum $C=A+B$ of symmetric matrices $A, \, B$ is again symmetric and matrix properties of interest of $C$ can be obtained by the ones of $A,B$ as follows:

\begin{lem} 
\label{Lemma: Courant-Fisher}
Let $A, \, B \in \R^{n \times n}$ be two symmetric matrices and $C= A+B\in \R^{n \times n}$ with corresponding eigenvalues 
$\lambda_1^A \leq \ldots\leq \lambda_n^A$, $\lambda_1^B \leq \ldots \leq \lambda_n^B$ and $\lambda_1^C \leq \ldots \leq \lambda_n^C$.
Then:
\begin{enumerate}  
\item $\lambda_j^A + \lambda_1^B \leq \lambda_j^C \leq \lambda_j^A + \lambda_n^B \text{ for } j \in [n] $.
\item If $A$ is positive definite and $B$ is positive semidefinite, then $C$ is positive definite.
\item If $A$ and $B$ are positive semidefinite, then $\emph{rank}(C) \geq \emph{max}\{ \emph{rank}(A), \,\emph{rank}(B) \}.$
\end{enumerate}
\end{lem}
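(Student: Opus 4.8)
The plan is to prove the three assertions of Lemma~\ref{Lemma: Courant-Fisher} using the Courant--Fischer min-max characterization of eigenvalues of symmetric matrices. Recall that for a symmetric $M \in \R^{n\times n}$ with eigenvalues $\lambda_1^M \le \ldots \le \lambda_n^M$ one has
\[
\lambda_j^M = \min_{\substack{U \subseteq \R^n \\ \dim U = j}} \ \max_{\substack{x \in U \\ \|x\| = 1}} \ x^\top M x
= \max_{\substack{U \subseteq \R^n \\ \dim U = n-j+1}} \ \min_{\substack{x \in U \\ \|x\| = 1}} \ x^\top M x.
\]
For part (i), I would fix $j \in [n]$ and choose a $j$-dimensional subspace $U$ attaining the minimum in the first formula for $A$. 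Then for every unit vector $x \in U$ we have $x^\top C x = x^\top A x + x^\top B x \le \lambda_j^A + \lambda_n^B$, the last step using that $x^\top B x \le \lambda_n^B\|x\|^2$ for all $x$ (i.e. $\lambda_n^B$ is the largest Rayleigh quotient of $B$). Taking the max over unit $x \in U$ and then noting $\lambda_j^C$ is a min over all $j$-dimensional subspaces gives $\lambda_j^C \le \lambda_j^A + \lambda_n^B$. The lower bound $\lambda_j^C \ge \lambda_j^A + \lambda_1^B$ follows symmetrically, either by the analogous argument with $x^\top B x \ge \lambda_1^B \|x\|^2$, or simply by applying the upper bound to the decomposition $A = C + (-B)$ and rearranging, since $\lambda_n^{-B} = -\lambda_1^B$.

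For part (ii), assume $A$ is positive definite and $B$ positive semidefinite. Then $\lambda_1^A > 0$ and $\lambda_1^B \ge 0$, so by part (i) with $j = 1$ we get $\lambda_1^C \ge \lambda_1^A + \lambda_1^B > 0$; hence all eigenvalues of the symmetric matrix $C$ are positive, which means $C$ is positive definite. (Alternatively this is immediate from $x^\top C x = x^\top A x + x^\top B x > 0$ for $x \ne 0$.)

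For part (iii), assume $A$ and $B$ are positive semidefinite. It suffices to show $\operatorname{rank}(C) \ge \operatorname{rank}(A)$; the bound $\operatorname{rank}(C) \ge \operatorname{rank}(B)$ is then symmetric and the claim follows by taking the maximum. Set $r = \operatorname{rank}(A)$, so $\lambda_{n-r+1}^A, \ldots, \lambda_n^A$ are the positive eigenvalues of $A$ while $\lambda_1^A = \cdots = \lambda_{n-r}^A = 0$. Applying part (i) with $j = n-r+1$ gives $\lambda_{n-r+1}^C \ge \lambda_{n-r+1}^A + \lambda_1^B > 0$, since $\lambda_{n-r+1}^A > 0$ and $\lambda_1^B \ge 0$. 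Therefore at least $r$ of the eigenvalues of $C$ (namely $\lambda_{n-r+1}^C \le \cdots \le \lambda_n^C$) are strictly positive, so $\operatorname{rank}(C) \ge r = \operatorname{rank}(A)$, as desired.

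None of the three parts presents a serious obstacle once the Courant--Fischer characterization is invoked; the only point requiring a little care is bookkeeping the index shifts in part (iii) — making sure that $\operatorname{rank}(A) = r$ translates to $\lambda_{n-r+1}^A$ being the smallest \emph{nonzero} eigenvalue, so that the correct index $j = n-r+1$ is fed into part (i). I would state the Courant--Fischer formula explicitly at the start (or cite a standard reference such as Horn--Johnson), then dispatch (i), (ii), (iii) in that order, since (ii) and (iii) are quick corollaries of (i).
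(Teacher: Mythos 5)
Your proposal is correct and follows exactly the route the paper takes: the paper's proof consists of the single remark that (i) follows from Courant--Fischer and then yields (ii) and (iii), and your write-up simply supplies the details of that same argument. The index bookkeeping in part (iii) (using that for a positive semidefinite matrix the rank equals the number of positive eigenvalues, so $j=n-r+1$ is the right index to feed into (i)) is handled correctly.
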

\begin{proof}
(i) follows from the theorem of Courant--Fischer. This yields then (ii) and (iii).
\end{proof}

Bounds for eigenvalues of positive definite
symmetric matrices can, e.g., be found in
\cite[(2.3)]{Wolkowicz} (upper bounds)
and \cite[(2.54)]{Wolkowicz} (lower bounds). More precisely, they are:
\begin{lem} 
\label{Lemma: allg. Schranken EW.e}
Let $A \in \R^{n \times n}$ be a positive definite symmetric matrix with eigenvalues $\lambda_1 \leq \ldots  \leq \lambda_n$, $m = \emph{tr}(A)/n$, and $s^2 = \emph{tr}(A^2)/n - m^2$. 
Then 
\[ 
\emph{det}(A)/\left(m+\frac{s}{\sqrt{n-1}}\right)^{n-1}
\leq \lambda_1 \leq \ldots \leq \lambda_n \leq m +s \sqrt{n -1}. \] \end{lem}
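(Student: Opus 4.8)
The plan is to reduce everything to two elementary facts about the shifted eigenvalues $\mu_i := \lambda_i - m$. Since $\operatorname{tr}(A) = \sum_{i=1}^n \lambda_i$ and $\operatorname{tr}(A^2) = \sum_{i=1}^n \lambda_i^2$, the definitions of $m$ and $s^2$ give $\sum_{i=1}^n \mu_i = 0$ and $\sum_{i=1}^n \mu_i^2 = \operatorname{tr}(A^2) - n m^2 = n s^2$. These two identities are the only structural input; the rest is the Cauchy--Schwarz inequality together with AM--GM.

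First I would prove the upper bound $\lambda_n \le m + s\sqrt{n-1}$. Writing $\mu_n = -\sum_{i=1}^{n-1}\mu_i$ and applying Cauchy--Schwarz gives $\mu_n^2 \le (n-1)\sum_{i=1}^{n-1}\mu_i^2 = (n-1)\bigl(n s^2 - \mu_n^2\bigr)$, hence $n\,\mu_n^2 \le n(n-1)s^2$, i.e. $\mu_n \le s\sqrt{n-1}$. Running the symmetric computation on $\mu_1 = -\sum_{i=2}^n \mu_i$ yields the auxiliary lower bound $\lambda_1 \ge m - s\sqrt{n-1}$, which is needed in the next step (and which is weaker than, hence implied by, the bound we are trying to prove, but easier to get at first).

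Next I would establish the lower bound on $\lambda_1$. Since $A$ is positive definite, all $\lambda_i$ are strictly positive, so AM--GM applied to $\lambda_2,\dots,\lambda_n$ gives $\prod_{i=2}^n \lambda_i \le \bigl(\tfrac{1}{n-1}\sum_{i=2}^n \lambda_i\bigr)^{n-1} = \bigl(\tfrac{nm-\lambda_1}{n-1}\bigr)^{n-1}$, where $\tfrac{nm-\lambda_1}{n-1} = \tfrac{1}{n-1}\sum_{i=2}^n\lambda_i > 0$. Feeding in $\lambda_1 \ge m - s\sqrt{n-1}$ from the previous step shows $\tfrac{nm-\lambda_1}{n-1} \le m + \tfrac{s}{\sqrt{n-1}}$, whence $\det(A) = \lambda_1 \prod_{i=2}^n \lambda_i \le \lambda_1\bigl(m + \tfrac{s}{\sqrt{n-1}}\bigr)^{n-1}$. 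Dividing by the positive quantity $\bigl(m + \tfrac{s}{\sqrt{n-1}}\bigr)^{n-1}$ gives $\lambda_1 \ge \det(A)/\bigl(m + \tfrac{s}{\sqrt{n-1}}\bigr)^{n-1}$, and the intermediate chain $\lambda_1 \le \dots \le \lambda_n$ is just the ordering convention. (Alternatively, one may simply cite \cite[(2.3), (2.54)]{Wolkowicz}.)

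The computation is short and there is no genuinely hard step; the one point requiring care is that the $n-1$ largest eigenvalues cannot be bounded individually (indeed $\lambda_n$ may be as large as $m + s\sqrt{n-1}$), so one must bound their \emph{product} via AM--GM and then estimate the arithmetic mean $\tfrac{nm-\lambda_1}{n-1}$ using the auxiliary bound on $\lambda_1$ rather than term by term. One should also note that $n \ge 2$ is needed for the $\sqrt{n-1}$ and $(n-1)$-th power to make sense, which is consistent with the standing assumption of the paper.
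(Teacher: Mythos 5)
Your proof is correct. Note that the paper itself does not prove this lemma at all: it simply cites the bounds (2.3) and (2.54) of Styan--Wolkowicz, so your Cauchy--Schwarz argument for $|\lambda_i - m| \le s\sqrt{n-1}$ and the AM--GM step bounding $\prod_{i\ge 2}\lambda_i$ by $\bigl(\tfrac{nm-\lambda_1}{n-1}\bigr)^{n-1}$ together constitute exactly the standard derivation underlying that reference, supplied here in full. The only caveat, which you already flag, is the standing assumption $n\ge 2$.
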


Matrix decompositions of $\Sigma_n$ are studied in Sections \ref{Kap: Superkrit. Reg.}--\ref{Kap: Krit. Reg.}. Recall that an \emph{LU decomposition} of a matrix $A \in \mathbb{R}^{n \times n}$ is an equation 
$PA= LU$,
where $L=(l_{ij})\in \mathbb{R}^{n \times n}$ is a normalized lower triangular matrix and $U=(u_{ij})\in \mathbb{R}^{n \times n}$ is an upper triangular matrix, which doesn't need to be normalized, i.e. also entries $u_{ii} = 0$ are allowed. Moreover, $P$ is a matrix obtained from the unit matrix by a certain number of row switches. For the next theorem see, e.g., \cite{RichterWick}.

\begin{thm} \label{Alg: LR}
Let $A \in \mathbb{R}^{n \times n} $ be a matrix. There exists an algorithm which terminates and returns the matrices $P$, $L$ and $U$ for a correct LU decomposition $ PA = L U$. If $A$ is regular and $P = I_n$, this decomposition is unique and $u_{ii} \neq 0$ for $ i \in [n]$.
\end{thm}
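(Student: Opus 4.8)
The plan is to describe the classical Gaussian elimination with partial pivoting, since the statement is essentially an existence-and-correctness assertion for that algorithm together with a uniqueness claim in the regular case. I would proceed by induction on $n$. For $n=1$ there is nothing to do: take $P=L=I_1$ and $U=A$. For the inductive step, consider $A\in\R^{n\times n}$ and look at the first column. If every entry of the first column is zero, set $P_1=I_n$ and note that $A$ already has the block form $\left(\begin{smallmatrix} 0 & * \\ 0 & A' \end{smallmatrix}\right)$; otherwise pick a row index $k$ with $a_{k1}\neq 0$ and let $P_1$ be the permutation matrix swapping rows $1$ and $k$, so that $(P_1A)_{11}\neq 0$. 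In either case one elimination step produces a normalized lower triangular matrix $L_1$ (identity plus a single column of multipliers below the diagonal, which are $0$ in the degenerate case) such that
\[
L_1^{-1}P_1A=\begin{pmatrix} u_{11} & w^{\top} \\ 0 & A_{n-1} \end{pmatrix},
\]
where $u_{11}=(P_1A)_{11}$ and $A_{n-1}\in\R^{(n-1)\times(n-1)}$. Apply the inductive hypothesis to $A_{n-1}$ to obtain $P'A_{n-1}=L'U'$, and then assemble $P,L,U$ from $P_1$, $L_1$, and the block-diagonal extensions of $P'$, $L'$, $U'$, checking that the permutations commute past the multiplier column correctly (a standard bookkeeping point: conjugating $L_1$ by the block permutation $\mathrm{diag}(1,P')$ only permutes the entries of its subdiagonal column, so it stays normalized lower triangular). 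Termination is immediate because the recursion strictly decreases $n$.

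For the uniqueness part, assume $A$ is regular and that the pivoting never requires a row swap, i.e. $P=I_n$; equivalently all leading principal minors of $A$ are nonzero. Suppose $A=L_1U_1=L_2U_2$ are two LU decompositions with $L_i$ normalized lower triangular and $U_i$ upper triangular. Since $A$ is regular, each $U_i$ is regular, hence $U_i$ has no zero on its diagonal; in particular both $L_i$ and $U_i$ are invertible. Then $L_2^{-1}L_1=U_2U_1^{-1}$: the left side is normalized lower triangular and the right side is upper triangular, so both equal $I_n$, giving $L_1=L_2$ and $U_1=U_2$. Finally, $u_{ii}\neq 0$ for all $i$ follows because $\prod_{i=1}^n u_{ii}=\det(U)=\det(A)\neq 0$ (using $\det(P)=\det(L)=1$), or more precisely because the $i$-th leading principal minor of $A$ equals $\prod_{j=1}^i u_{jj}$ and is nonzero by regularity applied to each leading block.

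The main obstacle — really the only nontrivial point — is the commuting bookkeeping between later pivot permutations and the earlier multiplier columns, i.e. verifying that the naive "do one elimination step, then recurse" actually assembles into a global identity $PA=LU$ with $L$ still normalized lower triangular. The clean way to handle this is to phrase the whole elimination as
\[
L_{n-1}^{-1}P_{n-1}\cdots L_1^{-1}P_1\,A=U,
\]
and then to observe that for $j<k$ the conjugated elementary matrix $\widetilde{L}_j:=(P_{n-1}\cdots P_{j+1})L_j(P_{n-1}\cdots P_{j+1})^{-1}$ is again a normalized lower triangular elementary matrix (its off-diagonal column, living in positions below row $j$, is merely permuted among those positions), so that $PA=LU$ with $P=P_{n-1}\cdots P_1$ and $L=\widetilde L_1\cdots \widetilde L_{n-1}$. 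This is the standard argument and I would only sketch it, referring to \cite{RichterWick} for the routine details rather than writing out the index manipulations in full.
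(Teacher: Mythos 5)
Your proposal is correct and complete: the inductive existence argument with the conjugation bookkeeping for the pivot permutations, the triangularity argument $L_2^{-1}L_1=U_2U_1^{-1}=I_n$ for uniqueness, and the determinant identity $\det(A)=\prod_{i=1}^n u_{ii}$ for the nonvanishing of the pivots are all sound. The paper does not prove this statement itself but cites \cite{RichterWick}, and your argument is exactly the classical one found there, so there is nothing to compare beyond noting agreement.
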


In this manuscript $I_n$ is the $n \times n$ unit matrix. Besides LU decompositions we also investigate a \emph{Cholesky decomposition} $A = GG^t$ of a  positive semidefinite symmetric matrix $A \in \mathbb{R}^{n \times n}$ into a product of a lower triangular matrix $G=(g_{ij})\in \mathbb{R}^{n \times n}$ and its transpose. See, e.g., \cite{RichterWick} or \cite[Theorem 4.2.6.]{GolubVanLoan} for a proof of:

\begin{Alg} 
\label{Algo: Korrektheit Cholesky Alg. pos. semidef.}
\begin{align*}
\textbf{Input:}& \text{  Matrix }A \in \mathbb{R}^{n \times n}
\text{ positive semidefinite and symmetric}. \\
\textbf{Output:}& \text{ Matrix } G= (g_{ij}) \in \mathbb{R}^{n \times n}  
\text{ s.t. }A = GG^t.\\
\textbf{Do:}&
\end{align*}
Compute $G=(g_{ij})\in \mathbb{R}^{n \times n }$  column by column as
\begin{align} \label{Eq: Eintrage Cholesky-Alg. pos. semidef.}
g_{ij} = \left\{
\begin{array}{ll}
0& \text{ for } \,i < j,\\
\sqrt{a_{ii}-  \sum_{k=1}^{i-1}g_{ik}^2} &\text{ for } \, i=j, \\
\frac{1}{g_{jj}}(a_{ij}- \sum_{k=1}^{j-1} g_{ik}g_{jk} )& \text{ for } \, i >j, \text{ and } g_{jj} \neq 0, \\
0&\text{ for } \,  i >j  \text{ and } g_{jj} =0.\\
\end{array}
\right.
\end{align}  If $A$ is positive definite, then $G$ is uniquely determined and $g_{ii}>0$ for all $i \in [n]$.  
\end{Alg}

An LU decomposition $A =LU$ and a Cholesky decomposition $A =G G^t$ 
of $A \in \mathbb{R}^{n \times n}$ are both decompositions into a lower and an upper triangular matrix. If $A$ is symmetric and positive definite, then the unique decompositions can be merged into another. Indeed:
\begin{enumerate}
    \item The matrix $G$ from the Cholesky-decompo\-sition of $A$ is given by 
$G = LD^{\frac{1}{2}}$, where $D^{\frac{1}{2}} = (\sqrt{d_{ij}}) \in \mathbb{R}^{n \times n}$ is a diagonal matrix with $d_{ii}= u_{ii}$.
    \item In the LU decomposition of $A$ one has $L = G\tilde{D}$  and $U = \tilde{D}^{-1}G^t$, where $\tilde{D} = (\tilde{d}_{ij}) \in \mathbb{R}^{n \times n}$ is a diagonal matrix with $\tilde{d}_{ii}= 1/g_{ii}$.
\end{enumerate}

\section{Supercritical Regime} \label{Kap: Superkrit. Reg.}

Recall Theorem \ref{Satz: Kov.matrix Laengen Potenz Funktional} and the notation used in (\ref{Eq: Kov.matrix Laengen Potenz Funktional}).
In this section $\Sigma_n$ is always considered in the supercritical regime (i.e. $t\delta_t^d \rightarrow \infty$) for $n\geq 2$. First examinations regarding eigenvalues and eigenspaces of $\Sigma_n$ are: 

\begin{exa} \label{Bsp: EW.e, ER.e superkrit. Reg.}  
We consider the asymptotic covariance matrix $\Sigma_2$ for any dimension $d$ with $\tau_2 > \tau_1 >- d/2$ and volume $V(W)$. 
A calculation of the characteristic polynomial yields
\begin{align*}
\chi(\Sigma_2) = (\lambda-\lambda_1)(\lambda-\lambda_2)  \in \R[\lambda]
\end{align*}
with eigenvalues $\lambda_1=0 \, \text{ and } \,\lambda_2 = \frac{V(W)d^2\kappa_d^2\left((\tau_1+d)^2+(\tau_2+d)^2\right)}{(\tau_1+d)^2(\tau_2+d)^2}.$
The eigenspaces are
\[
\text{eig}(\Sigma_2, \,\lambda_1) =  \{ r ( 
\frac{\tau_1+d}{\tau_2+d},
-1 )^t  \mid r \in \R \}
\text{ and }
\text{eig}(\Sigma_2, \,\lambda_2) = \{ r ( 
\frac{\tau_2+d}{\tau_1+d},
1 )^t \mid r \in \R\}. 
\]

\end{exa}
This examples can be generalized to arbitrary $n\geq 2$. We introduce the abbreviation:
\begin{align} \label{Eq: Abbreviation a_i}
   a_i =\tau_i+d \,\text{ for }\,  i \in [n]
   \text{ and }
   b=\sum_{k=1}^n \prod_{l \in [n] \setminus \{k\}}a_l^2.  
\end{align}

According to (\ref{Eq: Kov.matrix Laengen Potenz Funktional})
we have in the supercritical regime 
for the symmetric and positive semidefinite matrix $\Sigma_n$ that
\begin{equation}
\label{Eq: Kov.matrix Laengen Potenz Funktional supercritical}
\Sigma_n
=
V(W)d^2\kappa_d^2
\left( 
\begin{array}{cccc}
\frac{1}{a_1^2} & \frac{1}{a_1 a_2}  & \ldots & \frac{1}{a_1 a_n} \\
\frac{1}{a_1a_2} & \frac{1}{a_2^2} & \ldots  & \frac{1}{a_2 a_n}  \\
\vdots & \vdots& \ldots & \vdots \\
\frac{1}{a_1 a_n} & \frac{1}{a_2a_n} & \ldots & \frac{1}{a_n^2}\\
\end{array}
\right).
\end{equation}
\begin{thm} \label{Satz: EW.e, ER.e superkrit. Reg.} 
Let $\Sigma_n$ be as in (\ref{Eq: Kov.matrix Laengen Potenz Funktional}) in the supercritical regime for $n\geq2$. Then its eigenvalues are $\lambda_1 =0$, $\lambda_2 =V(W)d^2\kappa_d^2(\sum_{i=1}^n 1/ a_i^2 )$.
Let
\[
v_1 = ( \frac{a_1}{a_2},-1,0,\ldots,0)^t,   \ldots ,\, 
v_{n-1}=( \frac{a_1}{a_n},0,\ldots,0,-1,)^t,
\text{ and }
v_n = \left(a_n/a_1 ,\, a_n/a_2 ,\ldots, \, 1\right)^t.
\]
Then the eigenspaces of $\Sigma_n$ are
$\emph{eig}(\Sigma_n, \, \lambda_1)
= 
\langle 
v_1, \ldots, v_{n-1} 
\rangle \text{ and }
\emph{eig}(\Sigma_n, \,\lambda_2) =
\langle 
v_n
\rangle. 
$
\end{thm}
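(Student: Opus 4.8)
The plan is to observe that $\Sigma_n$ in the supercritical regime, as displayed in \eqref{Eq: Kov.matrix Laengen Potenz Funktional supercritical}, is a scalar multiple of a rank-one matrix. Indeed, setting $w = (1/a_1, 1/a_2, \ldots, 1/a_n)^t$, one sees immediately that $\Sigma_n = V(W) d^2 \kappa_d^2 \, w w^t$, since the $(i,j)$-entry of $w w^t$ is exactly $1/(a_i a_j)$. A nonzero matrix of the form $c\, w w^t$ with $c > 0$ has rank $1$, hence eigenvalue $0$ with multiplicity $n-1$, and a single nonzero eigenvalue. The nonzero eigenvalue is $c\, \langle w, w\rangle = c\, w^t w$, because $(c\, w w^t) w = c\, w (w^t w) = (c\, w^t w) w$, so $w$ is an eigenvector with eigenvalue $c\, w^t w$. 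This gives $\lambda_2 = V(W) d^2 \kappa_d^2 \sum_{i=1}^n 1/a_i^2$, and $\lambda_1 = 0$; since $w \neq 0$ and $c > 0$, we have $\lambda_2 > 0$, so there are exactly two distinct eigenvalues (for $n \geq 2$).

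Next I would identify the eigenspaces. The eigenspace for $\lambda_2$ is the line $\langle w \rangle$; since $v_n = a_n w$ (componentwise, the $i$-th entry of $v_n$ is $a_n/a_i$, and $1/a_i$ scaled by $a_n$), we have $\langle v_n \rangle = \langle w \rangle = \mathrm{eig}(\Sigma_n, \lambda_2)$. The eigenspace for $\lambda_1 = 0$ is $\Ker(\Sigma_n) = \Ker(w w^t) = w^\perp$, the hyperplane orthogonal to $w$, which has dimension $n-1$. It remains to check that $v_1, \ldots, v_{n-1}$ lie in $w^\perp$ and are linearly independent. For $v_k = (a_1/a_{k+1}, 0, \ldots, 0, -1, 0, \ldots, 0)^t$ (with the $-1$ in position $k+1$), the inner product with $w$ is $(a_1/a_{k+1})(1/a_1) + (-1)(1/a_{k+1}) = 1/a_{k+1} - 1/a_{k+1} = 0$, so each $v_k \in w^\perp$. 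Linear independence is clear: the lower-right $(n-1)\times(n-1)$ block of the matrix with columns $v_1, \ldots, v_{n-1}$ (rows $2$ through $n$) is $-I_{n-1}$. Hence $\langle v_1, \ldots, v_{n-1}\rangle$ is an $(n-1)$-dimensional subspace of the $(n-1)$-dimensional space $w^\perp$, so they coincide.

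The argument is essentially a one-line structural observation ($\Sigma_n$ is a positive multiple of $ww^t$) followed by bookkeeping; there is no serious obstacle. The only point requiring a little care is the matching of the stated generators $v_1, \ldots, v_{n-1}, v_n$ to the intrinsic descriptions $w^\perp$ and $\langle w\rangle$ — in particular verifying orthogonality of each $v_k$ to $w$ and the rescaling $v_n = a_n w$ — but these are immediate from the definitions in \eqref{Eq: Abbreviation a_i}. One could alternatively compute the characteristic polynomial directly via the matrix determinant lemma, $\det(\lambda I_n - c\, w w^t) = \lambda^{n-1}(\lambda - c\, w^t w)$, but the rank-one viewpoint makes the eigenspace computation more transparent.
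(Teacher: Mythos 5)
Your proof is correct and follows essentially the same route as the paper's: both arguments come down to exhibiting the vectors $v_1,\dots,v_n$ and verifying they are eigenvectors, which the paper does by direct matrix--vector multiplication and you do by first factoring $\Sigma_n = V(W)d^2\kappa_d^2\, ww^t$ with $w=(1/a_1,\dots,1/a_n)^t$. Your rank-one packaging is a slightly cleaner organization of the same computation, and it has the small advantage of making explicit why $\mathrm{eig}(\Sigma_n,0)$ is exactly $w^\perp$ (hence exactly the span of $v_1,\dots,v_{n-1}$), a point the paper's proof leaves to the reader via the count of $n$ linearly independent eigenvectors.
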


\begin{proof}
For $\lambda_2 $ one sees that 
$v_n$ is an eigenvector, since 
\begin{align*}
\left( \begin{array}{cccc}
\frac{1}{a_1^2} & \frac{1}{a_1 a_2}  & \ldots & \frac{1}{a_1 a_n} \\
\vdots & \vdots& \ldots & \vdots \\
\frac{}{a_1 a_n} & \frac{}{a_2a_n} & \ldots & \frac{}{a_n^2}\\
\end{array}\right) 
v_n 
&=
\left( \begin{array}{c}
\frac{a_n}{a_1^3} + \frac{a_n}{a_1a_2^2} + \cdots + \frac{1}{a_1a_n} \\
\vdots  \\
\frac{1}{a_1^2} +\frac{1}{a_2^2}+ \ldots + \frac{1}{a_n^2}  
\end{array}\right)
=
\left(\frac{1}{a_1^2}+ \frac{1}{a_2^2}+ \ldots + \frac{1}{a_n ^2}\right) 
v_n.
\end{align*}  
A similar computation shows that the $(n-1)$-many (obviously) linearly independent vectors 
$v_1,   \ldots, v_{n-1}$ are eigenvectors for $\lambda_1$. This concludes the proof.
\end{proof}

\begin{cor} 
\label{Kor: Rang superkrit. Reg.}
With the the assumptions of Theorem \ref{Satz: EW.e, ER.e superkrit. Reg.} 
it follows:~(i) $\emph{rank}(\Sigma_n) = 1$; (ii) $\emph{det}(\Sigma_n) =0$;
(iii) $\Sigma_n$ is positive semidefinite but, not positive definite;
(iv) The characteristic polynomial of $\Sigma_n$ is $\chi(\Sigma_n)= (\lambda-\lambda_1)^{n-1}(\lambda-\lambda_2) \in \R[\lambda] .$
\end{cor}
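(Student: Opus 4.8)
The plan is to deduce everything directly from Theorem \ref{Satz: EW.e, ER.e superkrit. Reg.}, which already pins down the full eigenstructure of $\Sigma_n$. The key preliminary observation is that the two listed eigenspaces exhaust the spectrum: since $\emph{eig}(\Sigma_n,\lambda_1)$ has dimension $n-1$ (spanned by the manifestly linearly independent vectors $v_1,\dots,v_{n-1}$) and $\emph{eig}(\Sigma_n,\lambda_2)$ has dimension $1$ (spanned by $v_n$), and eigenspaces for distinct eigenvalues are linearly independent, these together span $\R^n$; hence $\lambda_1=0$ and $\lambda_2=V(W)d^2\kappa_d^2\sum_{i=1}^n 1/a_i^2$ are the only eigenvalues, with geometric multiplicities $n-1$ and $1$ respectively. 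Note $\lambda_2>0$ because $a_i=\tau_i+d>0$ (recall $\tau_i>-d/2$), so in particular $\lambda_1\neq\lambda_2$.

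For (i), the kernel of $\Sigma_n$ is exactly $\emph{eig}(\Sigma_n,\lambda_1)$, which has dimension $n-1$; by rank--nullity, $\emph{rank}(\Sigma_n)=n-(n-1)=1$. For (ii), $\det(\Sigma_n)$ is the product of the eigenvalues counted with multiplicity, and $\lambda_1=0$ occurs (with multiplicity $n-1\geq 1$ since $n\geq 2$), so $\det(\Sigma_n)=0$; alternatively this is immediate from (i) together with $n\geq 2$. For (iii), all eigenvalues of $\Sigma_n$ are nonnegative ($0$ and $\lambda_2>0$), so $\Sigma_n$ is positive semidefinite; it fails to be positive definite because $v_1\neq 0$ satisfies $v_1^t\Sigma_n v_1=\lambda_1\|v_1\|^2=0$ (equivalently, $0$ is an eigenvalue).

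For (iv), recall that $\Sigma_n$ is symmetric, hence diagonalizable, so for each eigenvalue the algebraic multiplicity equals the geometric multiplicity. By the spectral description above these are $n-1$ for $\lambda_1$ and $1$ for $\lambda_2$, and they sum to $n=\deg\chi(\Sigma_n)$; therefore $\chi(\Sigma_n)=(\lambda-\lambda_1)^{n-1}(\lambda-\lambda_2)$. I do not anticipate any real obstacle here: the entire statement is a bookkeeping consequence of Theorem \ref{Satz: EW.e, ER.e superkrit. Reg.}, the only point worth stating carefully being the dimension count that certifies $\{\lambda_1,\lambda_2\}$ is the complete spectrum and the use of symmetry to pass from geometric to algebraic multiplicities in (iv).
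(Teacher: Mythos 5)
Your argument is correct and is exactly the route the paper intends: the corollary is stated without proof as an immediate consequence of Theorem \ref{Satz: EW.e, ER.e superkrit. Reg.}, and your bookkeeping (dimension count certifying the full spectrum, $\lambda_2>0$ from $a_i=\tau_i+d>d/2>0$, and symmetry to equate algebraic and geometric multiplicities) fills in precisely the steps the authors leave implicit.
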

The preceding outcome of the rank and definiteness was already found in \cite[Proposition 3.4]{Reitzner}.
The results related to eigenvalues and eigenspaces of $\Sigma_n$ in Theorem \ref{Satz: EW.e, ER.e superkrit. Reg.} lead to the next fact. 

\begin{cor} 
\label{Kor: Diag.matrix, char. Poly. superkrit. Reg.}
With the the assumptions of Theorem \ref{Satz: EW.e, ER.e superkrit. Reg.} set  $D=  \diag(\lambda_1,\dots,\lambda_1,\lambda_2)\in \R^{n \times n}$
and $S \in \R^{n \times n}$ with column vectors $v_1, \dots,v_n$.
Then $\Sigma_n$ is similar to $D$ and $D=S^{-1} \Sigma_n S$, where $S^{-1} = (\tilde{s}_{ij})\in \R^{n \times n}$ with entries
\begin{align*} 
\tilde{s}_{ij} = \left\{
\begin{array}{ll}
\frac{\prod_{k=1}^n a_k^2}{a_{j}a_n b}&\text{ for }\,i=n,\\
-\frac{\sum_{k \in [n] \setminus \{i\}} a_{i}^2\prod_{l\in [n] \setminus \{k, \, i\}} a_{l}^2}{b}&\text{ for}\,j=i+1,\\
\frac{\prod_{k=1}^n a_k^2}{a_{i+1}a_j b}& \text{ else.}\\
\end{array}
\right.
\end{align*} 
\end{cor}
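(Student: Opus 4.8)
The plan is to read off the similarity statement directly from Theorem~\ref{Satz: EW.e, ER.e superkrit. Reg.}, and then to establish the closed form of $S^{-1}$ by verifying that the displayed matrix multiplies $S$ to the identity. For the first part, recall that the real symmetric matrix $\Sigma_n$ is diagonalizable and that, by Theorem~\ref{Satz: EW.e, ER.e superkrit. Reg.}, its eigenspace for $\lambda_1$ is $\langle v_1,\dots,v_{n-1}\rangle$ and its eigenspace for $\lambda_2$ is $\langle v_n\rangle$; since these are the eigenspaces for the two distinct eigenvalues of a diagonalizable matrix, their sum is all of $\R^n$, so $v_1,\dots,v_n$ is a basis of $\R^n$ and $S$ is invertible. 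The eigenvector identities $\Sigma_n v_j=\lambda_1 v_j$ for $j\le n-1$ and $\Sigma_n v_n=\lambda_2 v_n$ say precisely that the $j$-th column of $\Sigma_n S$ equals the $j$-th column of $SD$; hence $\Sigma_n S=SD$, i.e.\ $D=S^{-1}\Sigma_n S$, which is the similarity claim.

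It remains to identify $S^{-1}$. Let $T=(\tilde s_{ij})$ be the matrix defined by the three-case formula in the statement; I would check $TS=I_n$ entrywise. The computation is short because the columns $v_1,\dots,v_{n-1}$ of $S$ are very sparse, namely $v_j=\tfrac{a_1}{a_{j+1}}e_1-e_{j+1}$ for $j\le n-1$, while $v_n=(a_n/a_1,\dots,a_n/a_{n-1},1)^t$. Thus $(TS)_{ij}=\tfrac{a_1}{a_{j+1}}\,\tilde s_{i1}-\tilde s_{i,j+1}$ for $j\le n-1$, whereas $(TS)_{in}=\sum_{m=1}^{n-1}\tfrac{a_n}{a_m}\tilde s_{im}+\tilde s_{in}$. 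Substituting the formula, every resulting sum is cleared by the single identity $\sum_{k=1}^{n}a_k^{-2}=b/\prod_{k=1}^{n}a_k^2$, which is immediate from the definition of $b$ in (\ref{Eq: Abbreviation a_i}); note $a_k=\tau_k+d>0$, so no denominator vanishes (and, in passing, $\det S=a_1a_n\sum_{k=1}^n a_k^{-2}\neq 0$). Its reformulation $\prod_{l\neq m}a_l^2-b=-\sum_{k\neq m}\prod_{l\neq k}a_l^2$ for $m\in[n]$ is what produces the numerator of the entries with $j=i+1$. One organizes the check by the position $i=n$ versus $i<n$, and within a row by whether the column index hits the distinguished value $i+1$.

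I expect the only real work to be this bookkeeping. In the case $i<n$, $j=n$ one must see that the $-1$ carried by the single entry $\tilde s_{i,i+1}$ cancels against the $e_1$-contribution, so that $\sum_{m=1}^{n-1}\tfrac{a_n}{a_m}\tilde s_{im}+\tilde s_{in}$ collapses to $0$; and one has to treat the boundary values $j=1$, $j=i$, $j=n$ so that the term $-1$ appears exactly when the column index equals $i+1$. There is no conceptual difficulty beyond keeping the three indices $i$, $i+1$, $j+1$ apart. An equivalent alternative is Cramer's rule: one cofactor expansion along the first row, exploiting the same sparsity, gives the value of $\det S$ recorded above, and each $\tilde s_{ij}$ equals $(-1)^{i+j}$ times the corresponding (again small) minor of $S$ divided by $\det S$; but the $TS=I_n$ verification seems the cleaner write-up.
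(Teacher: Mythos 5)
Your argument is correct and is essentially the paper's own proof: the paper likewise establishes the formula by an entrywise verification that the displayed matrix is $S^{-1}$ (it checks $S\tilde S=I_n$ where you check $\tilde S S=I_n$, equivalent for square matrices and exploiting the same sparsity of $v_1,\dots,v_{n-1}$), and your preliminary justification that $\Sigma_n S=SD$ with $S$ invertible is a welcome addition that the paper leaves implicit. One caution for the bookkeeping: in the $j=i+1$ case the entry must be read with the \emph{column} index, i.e.\ as $-\bigl(\sum_{k\neq j}a_j^2\prod_{l\notin\{k,j\}}a_l^2\bigr)/b$ with $j=i+1$ (this is what the paper's own computation of the case $i=j\neq 1$ actually substitutes), since with the row index $i$ as literally printed the diagonal entries of your product come out as $1+\tfrac{1}{b}\prod_k a_k^2\,(a_{i+1}^{-2}-a_i^{-2})\neq 1$.
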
 

\begin{proof}
Let $\tilde{S}$ be the matrix with entries $\tilde{s}_{ij}$. There are four distinct cases for entries of $S\tilde{S}$. The first one arises by the multiplication of the first row of $S$ with the first column of $\tilde{S}$:
\begin{align*}
\left(\sum_{k=1}^{n-1} \frac{a_1}{a_{k+1}} \cdot\tilde{s}_{k1}\right) + \frac{a_n}{a_1}\cdot \tilde{s}_{n1} 
&=
\left(
\sum_{k=1}^{n-1} 
\frac{a_1\prod_{m=1}^n a_m^2}
{a_{k+1}^2a_1 b} 
\right)
+ 
\frac{a_n\prod_{m=1}^n a_m^2}
{a_{1}^2a_n b}=1.
\end{align*}
The multiplication of the first row of $S$ with the $j$-th column of $\tilde{S}$ for $j \neq 1$ is
\begin{align*}
&\sum_{k\in [n-1]\setminus \{j-1\}} \frac{a_1}{a_{k+1}}\cdot \tilde{s}_{kj} + \frac{a_1}{a_{j}} \cdot\tilde{s}_{j-1j}+ \frac{a_n}{a_1}\cdot \tilde{s}_{nj} 
\\
&= 
\sum_{k\in [n-1]\setminus \{j-1\}}  
\frac{a_1a_j\prod_{m \in [n] \setminus \{j, \, k+1\}} a_m^2}
{b} 
- 
\frac{a_1a_j\sum_{k \in [n] \setminus \{j\}} \prod_{l\in [n] \setminus \{k, \, j\}} a_{l}^2}
{b} 
+ 
\frac{a_1a_j\prod_{k\in [n]\setminus \{1, j\}} a_k^2}
{b}
\end{align*}

and this is easily seen to be 0.
The multiplication of the $i$-th row of $S$ with the $j$-th column of $\tilde{S}$ for $i \neq 1$ and $j\neq i$ gives 

\begin{align*}
    &- \tilde{s}_{i-1j} + \frac{a_n}{a_i}\cdot \tilde{s}_{nj} =-\frac{\prod_{k=1}^n a_k^2}{a_{i}a_j b}+ \frac{a_n}{a_i}\cdot \frac{\prod_{k=1}^n a_k^2}{a_{j}a_n b} =0.
\end{align*}
Finally, the multiplication of the $i$-th row of $S$ with the $j$-th column of $\tilde{S}$ for $i=j \neq 1$ yields
\begin{align*}
    - \tilde{s}_{i-1j} + \frac{a_n}{a_i} \cdot\tilde{s}_{nj} &=
\frac{\sum_{k \in [n] \setminus \{i\}}a_{i}^2 \prod_{l\in [n] \setminus \{k, \, i\}} a_{l}^2}
{b}
+ 
\frac{a_n}{a_i} 
\cdot
\frac{\prod_{k=1}^n a_k^2}
{a_{j}a_n b} =1 .
\end{align*}
All in all thereby arises the unit matrix. Hence, $ \tilde{S} = S^{-1}$. 
\end{proof}
  
In the preceded result we found the \textit{Schur decomposition} 
\[\Sigma_n = SDS^{-1}
\]
of $\Sigma_n$. See \cite{Horn} for more details. Other interesting decompositions are:

\begin{thm}  
\label{Satz: LR-Zerleg. superkrit. Reg.}
Let $\Sigma_n$ be as in (\ref{Eq: Kov.matrix Laengen Potenz Funktional}) in the supercritical regime for $n\geq2$. 
\begin{enumerate}
\item 
An LU decomposition $\Sigma_n =LU$ of $\Sigma_n$ is given by  $L= (l_{ij})\in \R^{n \times n}$ and $U=(u_{ij})\in \R^{n \times n}$ with entries
\begin{align*} 
l_{ij} = \left\{
\begin{array}{ll}
1&\text{ for }\,i=j,\\
\frac{a_1}{a_i} & \text{ for }\,i >1  \text{ and }  j=1,\\
0& \text{ else},\\
\end{array}
\right.\, \text{ and }\,
u_{ij} = \left\{
\begin{array}{ll}
\frac{V(W)(d\kappa_d)^2}{a_1a_j} & \text{ for }\,i=1,\\
0& \text{ else}.\\
\end{array}
\right.
\end{align*}
\item
A Cholesky decomposition $ \Sigma_n = G G^t$ of $\Sigma_n$ is given by  $G= (g_{ij})\in \R^{n \times n}$ with entries 
\begin{align*} 
g_{ij} = \left\{
\begin{array}{ll}
\frac{\sqrt{V(W)}d\kappa_d}{a_i}& \text{ for } j=1,\\
0& \text{ else}.\\
\end{array} \right.
\end{align*}
\item
A matrix root $B=(b_{ij})\in \R^{n \times n}$ of $\Sigma_n$ 
(i.e., $B^2=\Sigma_n$) has entries
\begin{align*} 
b_{ij} = \frac{\sqrt{V(W)}d\kappa_d\prod_{k=1}^na_k}{a_ia_j \sqrt{\sum_{i=1}^n\prod_{k \in [n] \setminus \{i\}}a_k^2}}.
\end{align*}
\end{enumerate}
\end{thm}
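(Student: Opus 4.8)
The plan is to exploit that, in the supercritical regime, $\Sigma_n$ is a rank-one matrix. Writing $c = V(W)d^2\kappa_d^2$ and $w = (1/a_1,\dots,1/a_n)^t$, equation (\ref{Eq: Kov.matrix Laengen Potenz Funktional supercritical}) says precisely $\Sigma_n = c\,w w^t$. Each of the three claimed decompositions is then nothing but a factorization of this rank-one matrix in the prescribed shape, so every part reduces to a single matrix multiplication together with at most one elementary identity; I would treat the three parts in turn.

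For (i), I would first note that the proposed $L$ is lower triangular with unit diagonal --- its first column is $(1,a_1/a_2,\dots,a_1/a_n)^t = w/w_1$ and it agrees with $I_n$ off the first column --- and that the proposed $U$ is upper triangular with only its first row nonzero, namely equal to $c\,w_1 w^t$. Since $u_{kj}=0$ for $k\geq 2$, the product collapses to $(LU)_{ij}=l_{i1}u_{1j}$, which equals $\frac{a_1}{a_i}\cdot\frac{c}{a_1a_j}=\frac{c}{a_ia_j}$ for $i>1$ and $\frac{c}{a_1a_j}$ for $i=1$, i.e. exactly $\Sigma_n$. Existence of an LU decomposition with $P=I_n$ is anyway furnished by Theorem \ref{Alg: LR}; here $\Sigma_n$ is singular, so such a decomposition need not be unique, and the statement is simply exhibiting one valid choice, which the verification confirms.

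For (ii) I would argue analogously: the proposed $G$ is lower triangular with first column $\sqrt{c}\,w$ and all other entries zero, so $(GG^t)_{ij}=g_{i1}g_{j1}=\frac{c}{a_ia_j}=(\Sigma_n)_{ij}$. Since $\Sigma_n$ is positive semidefinite (being $c\,w w^t$ with $c>0$), Algorithm \ref{Algo: Korrektheit Cholesky Alg. pos. semidef.} applies, and one may additionally remark that it returns precisely this $G$: indeed $g_{11}=\sqrt{a_{11}}=\sqrt{c}/a_1$, then $g_{i1}=a_{i1}/g_{11}=\sqrt{c}/a_i$, and $g_{22}=\sqrt{a_{22}-g_{21}^2}=\sqrt{c/a_2^2-c/a_2^2}=0$, which forces all remaining columns to vanish. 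For (iii), I would rewrite the proposed matrix root as $B=\frac{\sqrt{c}\,\prod_{k=1}^n a_k}{\sqrt{b}}\,w w^t$, where $b$ is as in (\ref{Eq: Abbreviation a_i}); this is symmetric, and $B^2=\frac{c\,(\prod_k a_k)^2}{b}\,(w^t w)\,w w^t$. The one identity needed is $w^t w=\sum_{i=1}^n 1/a_i^2=b/\prod_{k=1}^n a_k^2$, obtained by bringing the sum to the common denominator $\prod_k a_k^2$; substituting it gives $B^2=c\,w w^t=\Sigma_n$, as desired.

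No step is a genuine obstacle: the only points requiring care are the bookkeeping between the constant $c=V(W)d^2\kappa_d^2$ and its square root $\sqrt{V(W)}\,d\kappa_d$ appearing in (ii) and (iii), and the identity $\sum_i 1/a_i^2=b/\prod_k a_k^2$ used in (iii); the positivity $a_i=\tau_i+d>d/2>0$ (from $\tau_i>-d/2$) guarantees that all radicands and denominators are well defined and positive.
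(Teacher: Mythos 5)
Your proof is correct. For parts (i) and (ii) you do essentially what the paper does: a direct entrywise verification of $LU=\Sigma_n$ and $GG^t=\Sigma_n$ (the paper likewise computes $\sigma_{ij}=l_{i1}u_{1j}$ and runs the Cholesky recursion to see that all columns of $G$ beyond the first vanish). The genuine difference is in part (iii). The paper \emph{derives} the square root from the Schur decomposition $\Sigma_n=SDS^{-1}$ of Corollary \ref{Kor: Diag.matrix, char. Poly. superkrit. Reg.}: it sets $B=SD^{1/2}S^{-1}$, so that $B^2=SDS^{-1}=\Sigma_n$ is automatic, and then obtains the entry formula by multiplying out $R=SD^{1/2}$ and $RS^{-1}$, which requires the explicit (and somewhat involved) inverse $S^{-1}$. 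You instead observe that the claimed $B$ is the rank-one matrix $\frac{\sqrt{c}\,\prod_k a_k}{\sqrt{b}}\,ww^t$ with $w=(1/a_1,\dots,1/a_n)^t$ and verify $B^2=\Sigma_n$ directly from $ww^tww^t=(w^tw)\,ww^t$ together with the identity $w^tw=b/\prod_k a_k^2$. Your route is shorter and avoids the machinery of Corollary \ref{Kor: Diag.matrix, char. Poly. superkrit. Reg.} entirely; what it does not explain is where the formula for $B$ comes from (the paper's construction exhibits $B$ as the spectral, hence positive semidefinite, square root), but for the statement as written -- ``$B^2=\Sigma_n$'' -- your verification is fully sufficient, and your side remarks on the constants and on $a_i>d/2>0$ cover the only delicate points.
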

\begin{proof}
(i) 
Computing the entries $\sigma_{ij}$ of the given product $LU$ leads to
\[ \sigma_{1j} = 1 \cdot \frac{V(W) (d \kappa_d) ^2}{a_1 a_j}  = \frac{V(W) (d \kappa_d) ^2}{a_1 a_j} \text{ for $j \in [n]$ } \text{ and}\]
\[ \sigma_{ij} = \frac{a_1}{a_i} \cdot \frac{V(W) (d \kappa_d) ^2}{a_1 a_j}  = \frac{V(W) (d \kappa_d) ^2}{a_i a_j} \text{ for $i > 2$ and $j \in [n]$, }\]
which are exactly the entries of $\Sigma_n$. 

(ii)
We determine the entries of $G$ column by column by using formula (\ref{Eq: Eintrage Cholesky-Alg. pos. semidef.}). 
Without a computation one obtains $g_{ij} =0$ for $j>i$. 
Additionally, we get the first column of $G$ by 
\[ 
g_{11} = \sqrt{\sigma_{11}} = \sqrt{V(W)}d\kappa_d/a_1 \, \text{ and } \, g_{i1} = \sigma_{i1} /(\sqrt{V(W)}d\kappa_d/a_1)  = \frac{\sqrt{V(W)}d\kappa_d}{a_i} \,\text{ for } \,i \geq 2.
\]
For $i \geq 2$, one has $g_{ii} =\sqrt{(d\kappa_d)^2 V(W)/a_i^2 - g_{i1}^2} = 0$, so the remaining columns  are $0$.

(iii)
Let $S,\, D$ and $S^{-1}$  be the matrices from Corollary \ref{Kor: Diag.matrix, char. Poly. superkrit. Reg.}. Let $B= SD^{1/2}S^{-1}$. Then $ BB=SDS^{-1}=\Sigma_n.$ The product $SD^{1/2}=R=(r_{ij})\in \R^{n \times n}$ has entries 
\[
r_{ij} = \left\{
\begin{array}{ll}
\frac{\sqrt{V(W)}d\kappa_d a_n\sqrt{\sum_{i=1}^n\prod_{k \in [n] \setminus \{i\}}a_k^2}}{a_i\prod_{k\in [n]}a_k}& \text{ for } j=n,\\
0& \text{ else}.\\
\end{array} \right.
\]
The claimed entries for $B$ are determined from the product $RS^{-1}$.
\end{proof}

The next example illustrates the decompositions found above for a vector of length $n=3$:

\begin{exa} \label{Bsp: Zerlegungen superkrit. Reg.}
Consider $\Sigma_3$ for $d =2,\, V(W)=1, \, \tau_1 =0,\, \tau_2 = 1$ and $\tau_3 =2$:
\[\Sigma_3 =4 \pi^2 \left( \begin{array}{ccc}
1/4& 1/6  & 1/8  \\
1/6 & 1/9 & 1/12\\
1/8 & 1/12& 1/16 \\
\end{array}\right) .\]
 According to Theorem \ref{Satz: LR-Zerleg. superkrit. Reg.}(i) an LU decomposition of $\Sigma_3$ is given by
\[  LU =4 \pi^2  \left( \begin{array}{ccc}
1& 0  & 0 \\
4/6 & 1 & 0 \\
4/8 & 0& 1\\
\end{array}\right) \left( \begin{array}{ccc}
1/4& 1/6  & 1/8  \\
0 & 0 & 0 \\
0 & 0& 0 \\
\end{array}\right).\] 
Theorem \ref{Satz: LR-Zerleg. superkrit. Reg.}(ii) leads to a Cholesky decomposition of $\Sigma_3$:
\[  GG^t= 4 \pi^2  \left( \begin{array}{ccc}
1/2& 0  & 0\\
1/3 & 0 & 0\\
1/4 & 0& 0\\
\end{array}\right) \left( \begin{array}{ccc}
1/2& 1/3  & 1/4\\
0 & 0 & 0 \\
0 & 0& 0\\
\end{array}\right).
\]
Theorem \ref{Satz: LR-Zerleg. superkrit. Reg.}(iii) yields $\Sigma_3$ as the following product of matrix roots: 

\[
BB= 4 \pi^2  \left( \begin{array}{ccccc}
6/\sqrt{244}& 4/\sqrt{244}  & 3/\sqrt{244} \\
4/\sqrt{244}& 8/3\sqrt{244} & 2/\sqrt{244}\\
3/\sqrt{244} & 2/\sqrt{244}& 3/2\sqrt{244}  \\
\end{array}\right) \left( \begin{array}{ccccc}
6/\sqrt{244}& 4/\sqrt{244}  & 3/\sqrt{244} \\
4/\sqrt{244}& 8/3\sqrt{244} & 2/\sqrt{244}\\
3/\sqrt{244} & 2/\sqrt{244}& 3/2\sqrt{244}  \\
\end{array}\right).
\] 
\end{exa}

\section{Subcritical Regime} \label{Kap: Subkrit. Reg.}

In this section $\Sigma_n$ is always considered in the subcritical regime (i.e. $t\delta_t^d \rightarrow 0$) for $n\geq 2$. Recall that then this matrix has the following structure:
\begin{equation}
\label{Eq: Kov.matrix Laengen Potenz Funktional subcritical}
\Sigma_n
=
V(W) \left(\frac{d\cdot \kappa_d}{2( \tau_i + \tau_j +d) }\right) \in  \R^{n \times n}.
\end{equation}
By re-scaling and substituting 
\[
x_i:=\tau_i + d/2  \,\text{ for } \,i \in [n]
\]
one obtains the following \emph{Cauchy matrix} (see, e.g., \cite{Horn}): 
\begin{align}
\label{Eq: Kov.matrix Omega subkrit. Reg}
\Omega_n 
= 
(\omega_{ij}):=
\frac{2}{d\cdot \kappa_dV(W)} \Sigma_n
=
\biggl( \frac{1}{\tau_i + \tau_j + d }  \biggr)
=
\biggl( \frac{1}{x_i- (-x_j)}  \biggr)\in  \R^{n \times n}.
\end{align} 

It follows from \cite[0.9.12.1]{Horn} that:
\begin{lem} 
\label{Satz: Det. mod. Matrix subkrit. Reg.}
Let $\Omega_n$ be as in (\ref{Eq: Kov.matrix Omega subkrit. Reg}). Then
\[ 
\emph{det} (\Omega_n) = 
\frac{\prod_{1 \leq i < j \leq n} (x_i -x_j)^2}{ \prod_{ 1 \leq i, \,j \leq n }( x_i +x_j ) } >0.
\]
\end{lem}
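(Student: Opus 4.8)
The plan is to read the result off the classical Cauchy determinant formula, which is precisely \cite[0.9.12.1]{Horn}, and then to check the strict inequality by hand. Recall that this formula states
\[
\det\!\left(\frac{1}{s_i-t_j}\right)_{1\le i,j\le n}
= \frac{\prod_{1\le i<j\le n}(s_i-s_j)(t_j-t_i)}{\prod_{1\le i,j\le n}(s_i-t_j)}
\]
whenever all the denominators $s_i-t_j$ are nonzero.

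First I would match $\Omega_n$ to this template by setting $s_i=x_i$ and $t_j=-x_j$, so that the $(i,j)$ entry $\tfrac{1}{\tau_i+\tau_j+d}=\tfrac{1}{x_i+x_j}$ equals $\tfrac{1}{s_i-t_j}$. Here one uses the standing hypothesis $-d/2<\tau_1<\dots<\tau_n$: it forces $x_i=\tau_i+d/2>0$ for every $i$, hence $s_i-t_j=x_i+x_j>0$, so that $\Omega_n$ is well defined and the hypothesis of the cited formula is met.

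Next I would substitute. The factor $t_j-t_i=-x_j-(-x_i)=x_i-x_j$ coincides with $s_i-s_j$, so the numerator collapses to $\prod_{1\le i<j\le n}(x_i-x_j)^2$, while the denominator is $\prod_{1\le i,j\le n}(x_i+x_j)$; this is exactly the asserted closed form. For the strict inequality, the denominator is a product of strictly positive reals, hence positive, and the numerator is a product of squares of reals that are nonzero because the $x_i$ are pairwise distinct (again by $\tau_1<\dots<\tau_n$); therefore $\det\Omega_n>0$.

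I do not foresee a genuine obstacle. The only points requiring care are the sign bookkeeping when specializing $t_j=-x_j$ --- where the antisymmetric factor $t_j-t_i$ conveniently becomes $x_i-x_j$ and pairs with $s_i-s_j$ to form a square --- and the verification that the $x_i$ are positive and distinct, both immediate from $-d/2<\tau_1<\dots<\tau_n$. If one prefers a self-contained argument over quoting Horn--Johnson, the Cauchy determinant formula itself is proved by the usual induction on $n$ (subtract one column from the others, factor common terms out of rows and columns, and reduce to an $(n-1)\times(n-1)$ Cauchy determinant); but invoking the reference is the cleaner route, especially as it is already cited in the surrounding text.
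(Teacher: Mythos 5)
Your proposal is correct and follows exactly the paper's route: the paper likewise obtains the formula by specializing the Cauchy determinant identity from \cite[0.9.12.1]{Horn} via the substitution in (\ref{Eq: Kov.matrix Omega subkrit. Reg}), and your sign bookkeeping and positivity check (using $x_i>0$ and the $x_i$ pairwise distinct) fill in the details the paper leaves implicit.
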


\begin{cor} \label{Kor: Det. Kov.matrix subkrit. Reg.} 
Let $\Sigma_n $ be as in (\ref{Eq: Kov.matrix Laengen Potenz Funktional}) in the subcritical regime for $n\geq2$. Then: 
\begin{align} \label{Eq: Det. Sigma subkrit. Reg}
\emph{det} (\Sigma_n) = \biggl(\frac{V(W)d\kappa_d}{2}\biggr)^n
\frac{\prod_{1 \leq i < j \leq n} (x_i -x_j)^2}{ \prod_{ 1 \leq i, \,j \leq n }( x_i +x_j ) } >0.
\end{align}
In particular,  
(i) $\emph{rank}(\Sigma_n) = n$;
(ii) $\Sigma_n$ is positive definite.
\end{cor}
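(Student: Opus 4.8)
The plan is to reduce everything to the Cauchy determinant formula of Lemma \ref{Satz: Det. mod. Matrix subkrit. Reg.}. First I would recall from \eqref{Eq: Kov.matrix Omega subkrit. Reg} that $\Sigma_n = \tfrac{d\kappa_d V(W)}{2}\,\Omega_n$, so by multiplicativity of the determinant under scalar multiplication, $\det(\Sigma_n) = \bigl(\tfrac{d\kappa_d V(W)}{2}\bigr)^n \det(\Omega_n)$. Substituting the expression for $\det(\Omega_n)$ from Lemma \ref{Satz: Det. mod. Matrix subkrit. Reg.} then yields the displayed formula \eqref{Eq: Det. Sigma subkrit. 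Reg} verbatim.

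Next I would check that this quantity is strictly positive. Since $-d/2 < \tau_1 < \cdots < \tau_n$, the substituted values $x_i = \tau_i + d/2$ are all strictly positive, so every factor $x_i + x_j$ in the denominator $\prod_{1\le i,j\le n}(x_i+x_j)$ is positive. The numerator $\prod_{1\le i<j\le n}(x_i - x_j)^2$ is a product of squares, hence nonnegative, and it is nonzero because the strict ordering of the $\tau_i$ forces the $x_i$ to be pairwise distinct. Combined with the positive prefactor $\bigl(\tfrac{d\kappa_d V(W)}{2}\bigr)^n$, this gives $\det(\Sigma_n) > 0$, as claimed.

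For assertion (i), $\det(\Sigma_n) \neq 0$ shows immediately that $\Sigma_n$ is invertible, i.e.\ $\operatorname{rank}(\Sigma_n) = n$. For assertion (ii), I would combine two facts: as an asymptotic covariance matrix, $\Sigma_n$ is symmetric and positive semidefinite (Theorem \ref{Satz: Kov.matrix Laengen Potenz Funktional} and the ensuing remark), so all of its eigenvalues are $\geq 0$; and by (i) none of them is zero. Hence all eigenvalues are strictly positive and $\Sigma_n$ is positive definite. Alternatively, one may argue directly by Sylvester's criterion, noting that every leading principal submatrix of $\Sigma_n$ is again of the same form, built from the sub-tuple $\tau_1 < \cdots < \tau_k$, and therefore has positive determinant by the formula just established.

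There is no genuine obstacle here; the only points requiring care are the bookkeeping of the scaling constant and the observation that the hypothesis $-d/2 < \tau_1 < \cdots < \tau_n$ is precisely what guarantees both the strict positivity of the $x_i$ (controlling the denominator) and their pairwise distinctness (controlling the numerator) in the Cauchy determinant.
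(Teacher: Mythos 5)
Your proposal is correct and follows exactly the route the paper intends: rescale $\Sigma_n$ to the Cauchy matrix $\Omega_n$, apply the determinant formula of Lemma \ref{Satz: Det. mod. Matrix subkrit. Reg.}, and deduce positivity from $x_i=\tau_i+d/2>0$ and the pairwise distinctness of the $x_i$, with (i) and (ii) following from invertibility together with positive semidefiniteness. The paper leaves these steps implicit, but your bookkeeping of the scalar factor and the hypotheses on the $\tau_i$ is precisely the argument.
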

 The aforementioned conclusion about the rank and the definiteness was given before in \cite[Proposition 3.4]{Reitzner}. The inverse for an arbitrary Cauchy matrix is explicitly known, e.g., by \cite[Corollary 3.1]{Finck} since there are special cases of Cauchy-Vandermonde matrices. For $\Sigma_n$ this implies:

\begin{prop} 
\label{Satz: Inv. subkrit. Reg.}
Let $\Sigma_n$ be as in (\ref{Eq: Kov.matrix Laengen Potenz Funktional}) in the subcritical regime for $n \geq 2$. The inverse $\hat{\Sigma}_{n} = (\hat{\sigma}_{ij})\in \mathbb{R}^{n \times n}$ of $\Sigma_n$ has entries
\[ \hat{\sigma}_{ij}= 
\left\{
\begin{array}{ll}
-\frac{ 8x_ix_j(x_i+x_j)\prod_{ l \in [n] \setminus \{i, \, j\},\, k \in \{i, \, j\}} (x_k+x_l) }{V(W)d\kappa_d(x_j-x_i)^2\prod_{l \in  [n]\setminus \{i, \, j\}  ,\, k \in \{i, \, j\}} (x_k-x_l)} & \text{ for } i \neq j, \\
 \frac{4x_i \prod_{k \in  [n] \setminus \{i\}} (x_k+x_i)^2}{V(W)d\kappa_d\prod_{k \in  [n] \setminus \{i\}} (x_k-x_i)^2} & \, \textrm{ for } i =j. \\
\end{array}
\right.\]
\end{prop}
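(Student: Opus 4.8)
The first step is to strip off the harmless positive scalar. By \eqref{Eq: Kov.matrix Omega subkrit. Reg} we have $\Sigma_n = \tfrac{V(W)d\kappa_d}{2}\,\Omega_n$ with $\Omega_n = \bigl(1/(x_i+x_j)\bigr)$ a symmetric Cauchy matrix, where $x_i = \tau_i + d/2 > 0$ for all $i$ by the standing assumption $\tau_i > -d/2$. Since $\Omega_n$ is invertible (Corollary~\ref{Kor: Det. Kov.matrix subkrit. Reg.}), it suffices to compute $\Omega_n^{-1}$, and then $\hat\Sigma_n = \tfrac{2}{V(W)d\kappa_d}\,\Omega_n^{-1}$.

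Next I invoke the explicit inverse of a Cauchy matrix. A plain Cauchy matrix is the degenerate case of a Cauchy--Vandermonde matrix in which the polynomial (Vandermonde) part is empty, so \cite[Corollary 3.1]{Finck} applies directly and yields $\Omega_n^{-1} = (\gamma_{ij})$ with
\[
\gamma_{ij}
= (x_i+x_j)\,\prod_{l\in[n]\setminus\{i\}}\frac{x_l+x_j}{x_l-x_i}\;\prod_{l\in[n]\setminus\{j\}}\frac{x_i+x_l}{x_l-x_j}.
\]
(One may equally cite the classical Cauchy inverse formula; the two products each contribute a factor $(-1)^{n-1}$ under reindexing of denominators, so the expression is independent of that sign convention and is symmetric in $i,j$, as it must be.)

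The remaining work is the algebraic simplification. For $i=j$ the two products coincide, the exponents double, and $\gamma_{ii} = 2x_i\prod_{l\in[n]\setminus\{i\}}\frac{(x_l+x_i)^2}{(x_l-x_i)^2}$; multiplying by $\tfrac{2}{V(W)d\kappa_d}$ gives the stated diagonal entry. For $i\neq j$, split the term $l=j$ out of the first product and the term $l=i$ out of the second; these equal $\frac{2x_j}{x_j-x_i}$ and $\frac{2x_i}{x_i-x_j}$ respectively, and their product is $-\frac{4x_ix_j}{(x_i-x_j)^2}$. What is left runs over $l\in[n]\setminus\{i,j\}$ and combines into $\prod_{l\in[n]\setminus\{i,j\}}\frac{(x_l+x_i)(x_l+x_j)}{(x_l-x_i)(x_l-x_j)}$, i.e.\ exactly the quotient of the double products $\prod_{l\in[n]\setminus\{i,j\},\,k\in\{i,j\}}(x_k+x_l)$ over $\prod_{l\in[n]\setminus\{i,j\},\,k\in\{i,j\}}(x_k-x_l)$ appearing in the claim; multiplying by $\tfrac{2}{V(W)d\kappa_d}$ and using $(x_i-x_j)^2=(x_j-x_i)^2$ finishes this case.

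The only genuinely delicate point is matching the conventions of the cited Cauchy--Vandermonde inverse formula to our symmetric plain-Cauchy situation (empty Vandermonde block, $y_j=x_j$) and keeping the signs straight; everything after that is bookkeeping with products. If one prefers a self-contained argument, an alternative is to verify $\Omega_n\,\Omega_n^{-1}=I_n$ directly: the identity $\sum_{j=1}^n \frac{1}{x_i+x_j}\,\gamma_{jk}=\delta_{ik}$ is the partial-fraction expansion of the rational function $x\mapsto \prod_{l=1}^n(x+x_l)\big/\prod_{l=1}^n(x-x_l)$ evaluated at $x=x_i$ (a Lagrange-interpolation computation), which again is routine but avoids citing \cite{Finck}.
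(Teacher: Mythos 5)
Your proposal is correct and follows the same route as the paper: the paper derives the proposition directly from \cite[Corollary 3.1]{Finck} by viewing $\Omega_n$ as a Cauchy--Vandermonde matrix with empty polynomial part, exactly as you do, and gives no further argument. Your explicit simplification of the Cauchy-inverse entries (and the $n=2$-style sanity checks implicit in your sign discussion) merely supplies the bookkeeping the paper omits, so there is nothing to correct.
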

   
Next we consider eigenvalues and eigenspaces of $\Sigma_n$. For this it is useful to observe that for an 
eigenvalue $\lambda\in \R$ and a corresponding 
eigenvector $v \in \R^n$ of $\Omega_n$ one has

\begin{equation}
\label{Bem: EWe, EVen Omega, Sigma subkrit. Reg.}
\Sigma_n v =
(V(W)d\kappa_d/2)\Omega_n v = 
(V(W)d\kappa_d/2) \lambda v.
\end{equation}

\begin{exa} \label{Bsp: char. Poly. Sigma2 subkrit. Reg.}
Consider $\Omega_2$ as in (\ref{Eq: Kov.matrix Omega subkrit. Reg}). Its characteristic polynomial $\chi(\Omega_2)\in \R[\lambda]$ is
\begin{align*}
\chi(\Omega_2)&= \text{det}  \left( \begin{array}{ccccc}
\frac{1}{x_1+ x_1}- \lambda& \frac{1}{x_1 + x_2}  \\
\frac{1}{x_1+x_2}& \frac{1}{x_2+x_2} - \lambda \\
\end{array}\right) = \lambda^2 - \lambda\biggl(\frac{1}{2x_1} + \frac{1}{2x_2}\biggr)  + \frac{(x_1-x_2)^2}{2x_12x_2(x_1+x_2)^2} . 
\end{align*} 
Thus, according to (\ref{Bem: EWe, EVen Omega, Sigma subkrit. Reg.}) all eigenvalues of $\Sigma_2$ are
\begin{align*}
\lambda_{1,\,2} = \frac{V(W)d\kappa_d}{8x_1x_2(x_1+x_2)}\biggl((x_1+x_2)^2 \mp \sqrt{x_1^4+14x_1^2x_2^2+x_2^4}  \biggr). 
\end{align*}
\end{exa}

Experiments and further examples show that it is difficult to determine explicitly the eigenspaces of $\Sigma_n$ for  $n \geq 3$. Instead, we prove in the following a formula for the characteristic polynomial and afterwards bounds for the eigenvalues of $\Sigma_n$. Similar as above for eigenvalues and eigenvectors,
we observe the following relationship between  $\chi(\Omega_n)$ and $\chi(\Sigma_n)$. Let $\Omega_n$ be as in (\ref{Eq: Kov.matrix Omega subkrit. Reg}) with given characteristic polynomial
\[
\chi(\Omega_n) = a_n \lambda^n +  \ldots + a_1\lambda^1 +a_0\in \mathbb{R}[\lambda].
\]
Then $\chi(\Sigma_n)=\text{det}\left( \Sigma_n - \lambda I_n \right)=\left(\frac{V(W)d\kappa_d}{2} \right)^n \text{det}( \Omega_n -\frac{2}{V(W)d\kappa_d} \lambda I_n) $ equals to
\begin{equation}
\label{Bem: char. Poly. Omega, Sigma subkrit. Reg.}
a_n \lambda^n +  a_{n-1}\lambda^{n-1}\left(\frac{V(W)d\kappa_d}{2}\right) +\ldots +a_1 \lambda\left(\frac{V(W)d\kappa_d}{2}\right)^{n-1}+a_0 \left(\frac{V(W)d\kappa_d}{2}\right)^n.
\end{equation}

The following lemma is a variation of a well-known fact for the characteristic polynomial of an arbitrary quadratic matrix, which is useful for the investigated matrices in this paper. For example, it is an immediate consequence of \cite[Theorem 1.2.16.]{Horn}. 

\begin{lem}
\label{Satz: char. Poly. OmegaRS subkrit. Reg.}
Let
$\Omega_{R,\,S}=\left(z_{r_is_j}\right)\in \mathbb{R}^{n \times n}$ 
with
$R = (r_1,\ldots, r_n), \, S = (s_1,\ldots, s_n) \in \mathbb{N}^n$
and characteristic polynomial $\chi(\Omega_{R,\,S})  = (-1)^n\lambda^{n}+ a_{R,\,S}^{n-1,\,n}\lambda^{n-1}+  \ldots+a_{R,\,S}^{1,\,n} \lambda^1 + a_{R,\,S}^{0,\,n}  \in \mathbb{R}[\lambda].$ Then  the coefficients  $a_{R,\,S}^{k,\,n}$ for $k \in [n-1]$ can be computed via the formula
\[ 
a_{R,\,S}^{k,\,n} =
(-1)^k 
\sum_{1\leq i_1< \ldots <i_{n-k}\leq n} 
a_{(r_{i_1}, \ldots, r_{i_{n-k}}),\,(s_{i_1}, \ldots ,s_{i_{n-k}})}^{0,\,n-k}.
\] 
\end{lem}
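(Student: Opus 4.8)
The plan is to reduce the statement to the classical description of the coefficients of a characteristic polynomial as signed sums of principal minors, which is exactly \cite[Theorem 1.2.16.]{Horn}. Write $A := \Omega_{R,\,S} = (z_{r_is_j})\in \mathbb{R}^{n \times n}$. With the sign convention $\chi(A) = \det(A - \lambda I_n)$ used throughout the paper, a short computation from \cite[Theorem 1.2.16.]{Horn} (which concerns $\det(\lambda I_n - A)$) gives
\[
\chi(A) = \det(A-\lambda I_n) = (-1)^n\det(\lambda I_n - A) = \sum_{k=0}^{n} (-1)^k E_{n-k}(A)\,\lambda^k,
\]
where $E_m(A)$ denotes the sum of all $m \times m$ principal minors of $A$ (and $E_0(A)=1$). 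Comparing coefficients with the normalization in the statement shows $a_{R,\,S}^{k,\,n} = (-1)^k E_{n-k}(A)$ for every $k$; in particular the leading coefficient is $(-1)^n$ and the constant term is $\det A$, consistent with the lemma.

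It then remains to identify each $(n-k)\times(n-k)$ principal minor of $A$ with a term appearing on the right-hand side of the claimed formula. Fix an index set $1 \le i_1 < \ldots < i_{n-k} \le n$ and let $A'$ be the principal submatrix of $A$ obtained by keeping the rows and columns indexed by $i_1,\ldots,i_{n-k}$. By the very definition $A = (z_{r_is_j})$, deleting the rows and columns outside $\{i_1,\ldots,i_{n-k}\}$ produces exactly the matrix $(z_{r_{i_a}s_{i_b}})_{1\le a,b\le n-k}$, i.e.\ $A' = \Omega_{(r_{i_1},\ldots,r_{i_{n-k}}),\,(s_{i_1},\ldots,s_{i_{n-k}})}$ in the notation of the lemma. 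Next, for any $m\times m$ matrix $M$ one has $\det M = \det(M - 0\cdot I_m) = \chi(M)\big|_{\lambda=0}$, so $\det M$ is precisely the constant term $a^{0,\,m}$ of $\chi(M)$. Applying this with $m = n-k$ and $M = A'$ yields $\det A' = a_{(r_{i_1},\ldots,r_{i_{n-k}}),\,(s_{i_1},\ldots,s_{i_{n-k}})}^{0,\,n-k}$.

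Assembling the pieces, for $k \in [n-1]$ we get
\[
a_{R,\,S}^{k,\,n} = (-1)^k E_{n-k}(A) = (-1)^k\!\!\sum_{1\le i_1<\ldots<i_{n-k}\le n}\!\!\det\big(A[i_1,\ldots,i_{n-k}]\big) = (-1)^k\!\!\sum_{1\le i_1<\ldots<i_{n-k}\le n}\!\! a_{(r_{i_1},\ldots,r_{i_{n-k}}),\,(s_{i_1},\ldots,s_{i_{n-k}})}^{0,\,n-k},
\]
which is the asserted formula. The only genuinely delicate point is the sign bookkeeping: one must track how the convention $\chi(A) = \det(A - \lambda I_n)$ (rather than $\det(\lambda I_n - A)$) shifts the signs in \cite[Theorem 1.2.16.]{Horn}, and verify the resulting $(-1)^k$ is correct — a useful consistency check is the case $n=2$, $k=1$, where the formula reduces to $a_{R,\,S}^{1,\,2} = -(z_{r_1s_1}+z_{r_2s_2}) = -\operatorname{tr}(\Omega_{R,\,S})$, matching Example \ref{Bsp: char. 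Poly. Sigma2 subkrit. Reg.}. Everything else is a direct appeal to standard linear algebra.
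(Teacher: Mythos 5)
Your proposal is correct and follows exactly the route the paper intends: the paper gives no explicit proof but declares the lemma an immediate consequence of \cite[Theorem 1.2.16.]{Horn}, and your argument simply spells out that reduction — identifying the coefficients with signed sums of principal minors, matching each principal submatrix of $\Omega_{R,\,S}$ with $\Omega_{(r_{i_1},\ldots,r_{i_{n-k}}),\,(s_{i_1},\ldots,s_{i_{n-k}})}$, and tracking the sign shift coming from the convention $\chi(A)=\det(A-\lambda I_n)$. The sign bookkeeping and the $n=2$ consistency check are both right.
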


Note that  
$a_{(r_{i_1} ,\ldots, r_{i_{n-k}}),\,(s_{i_1}, \ldots, s_{i_{n-k}}) }^{0,\,n-k}$ is  the determinant of the matrix $(z_{r_{i_u}s_{i_v}})_{u , \, v \in [n-k]}$. Using the previous lemma and the formula for the determinant in Lemma \ref{Satz: Det. mod. Matrix subkrit. Reg.} we can calculate $\chi (\Omega_n) = a_n^{(n)} \lambda ^n + \ldots + a_n^{(0)}$ of $\Omega_n$ given as in (\ref{Eq: Kov.matrix Omega subkrit. Reg}), since  $\Omega_n=\Omega_{[n],[n]} $ with $z_{r_is_j} = \frac{1}{x_{r_i}+x_{s_j}}$ and $a_n^{(k)}= a_{(1, \ldots, n),\,(1, \ldots, n)}^{k,\,n}$. Taking  additionally (\ref{Bem: char. Poly. Omega, Sigma subkrit. Reg.}) into account, we obtain:
 
\begin{cor} 
\label{Satz: char. Poly. Sigma subkrit. Reg. }
Let $\Sigma_n$ be as in (\ref{Eq: Kov.matrix Laengen Potenz Funktional}) in the subcritical regime for $n \geq 2$ with $\chi(\Sigma_n) = b_n^{(n)} \lambda^n+b^{(n-1)}_n \lambda^{n-1}  + \ldots + b_n^{(1)} \lambda^1 + b_n^{(0)} \in \mathbb{R}[\lambda].$ The coefficients of $\chi(\Sigma_n)$ are
\begin{align*}
b_n^{(k)}&= (-1)^k\left(\frac{V(W)d\kappa_d}{2} \right)^{n-k}  \left(\sum_{1 \le i_1< \ldots <i_{n-k} \leq n} \frac{\prod_{l ,\,k\in \{i_1, \ldots, i_{n-k} \} ,\, l<k}(x_l-x_k)^2}{\prod_{l,\, k \in \{i_1, \ldots, i_{n-k} \}} (x_l+x_k)}  \right), k \in [n-1]. 
\end{align*}
\end{cor}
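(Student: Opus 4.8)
The plan is to obtain the formula by combining three facts already prepared above: the recursion of Lemma~\ref{Satz: char. Poly. OmegaRS subkrit. Reg.} expressing the coefficients of $\chi(\Omega_{R,S})$ through determinants of square submatrices, the closed form for Cauchy determinants in Lemma~\ref{Satz: Det. mod. Matrix subkrit. Reg.}, and the scaling relation~(\ref{Bem: char. Poly. Omega, Sigma subkrit. Reg.}) between $\chi(\Omega_n)$ and $\chi(\Sigma_n)$. No new idea is needed; the proof is a bookkeeping assembly.

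First I would specialize Lemma~\ref{Satz: char. Poly. OmegaRS subkrit. Reg.} to $\Omega_n=\Omega_{[n],[n]}$, where $z_{r_is_j}=1/(x_{r_i}+x_{s_j})$ and the coefficient of $\lambda^k$ in $\chi(\Omega_n)$ is $a_n^{(k)}=a_{(1,\dots,n),(1,\dots,n)}^{k,n}$. For $k\in[n-1]$ the lemma gives
\[
a_n^{(k)}=(-1)^k\sum_{1\le i_1<\dots<i_{n-k}\le n} a_{(i_1,\dots,i_{n-k}),(i_1,\dots,i_{n-k})}^{0,n-k},
\]
and each summand is the determinant of the $(n-k)\times(n-k)$ matrix $\bigl(1/(x_{i_u}+x_{i_v})\bigr)_{u,v\in[n-k]}$. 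Next I would observe that this matrix is again a Cauchy matrix of the shape~(\ref{Eq: Kov.matrix Omega subkrit. Reg}), now with the sub-tuple $(x_{i_1},\dots,x_{i_{n-k}})$ in place of $(x_1,\dots,x_n)$, so Lemma~\ref{Satz: Det. mod. Matrix subkrit. Reg.} applies verbatim and this determinant equals $\prod_{l,k\in\{i_1,\dots,i_{n-k}\},\,l<k}(x_l-x_k)^2\big/\prod_{l,k\in\{i_1,\dots,i_{n-k}\}}(x_l+x_k)$. Finally I would feed $a_n^{(k)}$ into~(\ref{Bem: char. Poly. Omega, Sigma subkrit. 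Reg.}), which says that the coefficient $b_n^{(k)}$ of $\lambda^k$ in $\chi(\Sigma_n)$ is $b_n^{(k)}=a_n^{(k)}(V(W)d\kappa_d/2)^{n-k}$; inserting the previous two displays yields exactly the claimed expression for $b_n^{(k)}$, $k\in[n-1]$.

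There is essentially no hard step, but the point that deserves a careful (if routine) check is precisely the one used above: that every principal submatrix of $\Omega_n$ indexed by a subset $\{i_1,\dots,i_{n-k}\}\subseteq[n]$ is itself a Cauchy matrix with parameters the restricted $x_{i_u}$'s, so that Lemma~\ref{Satz: Det. mod. Matrix subkrit. Reg.} can be applied termwise. One could also note that, although the statement is phrased for $k\in[n-1]$, the same formula remains consistent at the endpoints: at $k=0$ it collapses to $\det(\Sigma_n)$ and recovers Corollary~\ref{Kor: Det. Kov.matrix subkrit. Reg.}, while at $k=n$ it gives the leading coefficient $(-1)^n$ forced by the normalization $\chi(\Omega_{R,S})=(-1)^n\lambda^n+\dots$ in Lemma~\ref{Satz: char. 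Poly. OmegaRS subkrit. Reg.}. A purely notational caveat worth flagging in the write-up is that the letter $k$ serves both as the power index and as a bound summation variable inside the products $\prod_{l,k\in\{\dots\}}$; this overload is harmless once pointed out.
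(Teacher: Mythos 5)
Your proposal is correct and follows exactly the route the paper takes: the text immediately preceding the corollary assembles the same three ingredients (Lemma \ref{Satz: char. Poly. OmegaRS subkrit. Reg.} applied to $\Omega_n=\Omega_{[n],[n]}$, the Cauchy determinant formula of Lemma \ref{Satz: Det. mod. Matrix subkrit. Reg.} applied to each principal submatrix, and the rescaling (\ref{Bem: char. Poly. Omega, Sigma subkrit. Reg.})). Your explicit remark that every principal submatrix of $\Omega_n$ is again a Cauchy matrix in the restricted parameters is the one step the paper leaves tacit, and it is correct.
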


Determining the eigenvalues of $\Sigma_n$ explicitly is an open problem.
However, we have: 

\begin{thm} \label{Satz: Schranken EWe Sigma subkrit. Reg.}
Let $\Sigma_n$ be as in (\ref{Eq: Kov.matrix Laengen Potenz Funktional}) in the subcritical regime for $n \geq 2$ with eigenvalues $ \lambda_1 \leq \lambda_{2} \leq \ldots \leq \lambda_n$. We have $\text{\b{S}}_n \leq \lambda_1 \leq \ldots \leq \lambda_n \leq \bar{S}_n$ with 
\begin{align*}
\bar{S}_n&=\frac{V(W)d\kappa_d\left(\sum_{i=1}^n \frac{1}{2x_i}+\sqrt{(n-1)\left(\sum_{l=1}^n \sum_{k=1}^n \frac{n}{(x_k+x_l)^2}-\left(\sum_{i=1}^n \frac{1}{2x_i}\right)^2\right)}\right)}{2n}  \text{ and }\\
\text{\b{S}}_n &= \frac{V(W)d\kappa_d\prod_{1 \leq i < j \leq n} (x_i -x_j)^2}{ 2\prod_{ 1 \leq i, \,j \leq n }( x_i+x_j ) \left(\frac{\sum_{i=1}^n \frac{\sqrt{n-1}}{2x_i}+\sqrt{\sum_{l=1}^n \sum_{k=1}^n \frac{n}{(x_k+x_l)^2}-\left(\sum_{i=1}^n \frac{1}{2x_i}\right)^2}}{n\sqrt{n-1}} \right)^{n-1} }. 
\end{align*}
\end{thm}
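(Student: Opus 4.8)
The plan is to invoke Lemma~\ref{Lemma: allg. Schranken EW.e} with $A=\Sigma_n$. By Corollary~\ref{Kor: Det. Kov.matrix subkrit. Reg.}(ii) the matrix $\Sigma_n$ is symmetric and positive definite, so the lemma applies, and the task reduces to evaluating the three ingredients $m=\operatorname{tr}(\Sigma_n)/n$, $s^2=\operatorname{tr}(\Sigma_n^2)/n-m^2$ and $\det(\Sigma_n)$, and then recognizing the resulting bounds $\det(\Sigma_n)/(m+s/\sqrt{n-1})^{n-1}\le\lambda_1$ and $\lambda_n\le m+s\sqrt{n-1}$ as $\text{\b{S}}_n$ and $\bar S_n$ respectively.

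First I would record that, by~(\ref{Eq: Kov.matrix Omega subkrit. Reg}), $\Sigma_n=(V(W)d\kappa_d/2)\,\Omega_n$ with $\Omega_n=(1/(x_i+x_j))$ and $x_i=\tau_i+d/2>0$ (using $\tau_i>-d/2$). Hence the diagonal entries of $\Sigma_n$ are $(V(W)d\kappa_d/2)/(2x_i)$, so $m=\frac{V(W)d\kappa_d}{2n}\sum_{i=1}^n\frac1{2x_i}$. Since $\Sigma_n$ is symmetric, $\operatorname{tr}(\Sigma_n^2)=\sum_{i,j}\sigma_{ij}^2=(V(W)d\kappa_d/2)^2\sum_{i,j}(x_i+x_j)^{-2}$, and therefore
\[
s^2=\frac{\operatorname{tr}(\Sigma_n^2)}{n}-m^2=\Bigl(\tfrac{V(W)d\kappa_d}{2n}\Bigr)^2\Bigl(\sum_{l=1}^n\sum_{k=1}^n\tfrac{n}{(x_k+x_l)^2}-\bigl(\sum_{i=1}^n\tfrac1{2x_i}\bigr)^2\Bigr),
\]
the bracket being nonnegative because $s^2$, the variance of the eigenvalues of $\Sigma_n$, is nonnegative. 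This identifies $s=\frac{V(W)d\kappa_d}{2n}\sqrt{\sum_l\sum_k\frac n{(x_k+x_l)^2}-(\sum_i\frac1{2x_i})^2}$.

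The last step is to substitute these into the two bounds of Lemma~\ref{Lemma: allg. Schranken EW.e}. Pulling the common factor $V(W)d\kappa_d/(2n)$ out of $m+s\sqrt{n-1}$ reproduces $\bar S_n$ verbatim. For the lower bound I would use the explicit determinant~(\ref{Eq: Det. Sigma subkrit. Reg}) from Corollary~\ref{Kor: Det. Kov.matrix subkrit. Reg.}, namely $\det(\Sigma_n)=(V(W)d\kappa_d/2)^n\prod_{i<j}(x_i-x_j)^2/\prod_{i,j}(x_i+x_j)$; writing $m+\frac s{\sqrt{n-1}}=\frac{V(W)d\kappa_d}{2}\cdot\frac{\sum_i\frac{\sqrt{n-1}}{2x_i}+\sqrt{\sum_l\sum_k\frac n{(x_k+x_l)^2}-(\sum_i\frac1{2x_i})^2}}{n\sqrt{n-1}}$ and cancelling one factor $(V(W)d\kappa_d/2)$ from the numerator of $\det(\Sigma_n)$ against the $(n-1)$st power in the denominator gives exactly $\text{\b{S}}_n$. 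There is no genuine obstacle here: the argument is pure bookkeeping once $\operatorname{tr}(\Sigma_n)$ and $\operatorname{tr}(\Sigma_n^2)$ have been computed, the only mild care needed being the translation between the $\tau_i$ and the $x_i=\tau_i+d/2$ (so that $\tau_i+\tau_j+d=x_i+x_j$ and $2\tau_i+d=2x_i$) and the remarks that each radicand is $\ge 0$ and the denominator of $\text{\b{S}}_n$ is $>0$.
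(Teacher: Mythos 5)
Your proposal is correct and follows essentially the same route as the paper: both apply Lemma~\ref{Lemma: allg. Schranken EW.e} after computing $m$, $s^2$ via $\operatorname{tr}$ and $\operatorname{tr}$ of the square, and insert the determinant from Corollary~\ref{Kor: Det. Kov.matrix subkrit. Reg.}. The only cosmetic difference is that the paper applies the lemma to the rescaled Cauchy matrix $\Omega_n$ and reinstates the factor $V(W)d\kappa_d/2$ at the end via~(\ref{Bem: EWe, EVen Omega, Sigma subkrit. Reg.}), whereas you carry that constant through from the start.
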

\begin{proof} 
Apply Lemma \ref{Lemma: allg. Schranken EW.e} to the matrix $\Omega_n$ according to (\ref{Eq: Kov.matrix Omega subkrit. Reg}). We have $ m =\sum_{i=1}^n (2nx_i)^{-1}.$ The matrix $\Omega_n^2$ has entries $\sum_{k=1}^n ((x_k+x_i)(x_k+x_j))^{-1}$ at places $(i,\,j)$ for $i,\, j \in [n]$. Hence, 
$\text{tr}( \Omega_n^2) = \sum_{l=1}^n \sum_{k=1}^n (x_k+x_l)^{-2}$ and, thus,
\[ s^2 = \frac{\sum_{l=1}^n \sum_{k=1}^n \frac{1}{(x_k+x_l)^2}}{n}- \left(\frac{\sum_{i=1}^n \frac{1}{2x_i}}{n}\right)^2 = \frac{\sum_{l=1}^n \sum_{k=1}^n \frac{n}{(x_k+x_l)^2} - \left(\sum_{i=1}^n \frac{1}{2x_i}\right)^2}{n^2}. \] 

Using the results for $\text{det}(\Omega_n)$ (according to Lemma \ref{Satz: Det. mod. Matrix subkrit. Reg.}), $m$ and $s$ in Lemma \ref{Lemma: allg. Schranken EW.e} and applying (\ref{Bem: EWe, EVen Omega, Sigma subkrit. Reg.}) by adding the multiplicative constant $V(W)d\kappa_d/2$ leads to the claimed bounds.
\end{proof} 
   
The given bounds in  Theorem \ref{Satz: Schranken EWe Sigma subkrit. Reg.} 
can be tight and appear as the smallest and largest eigenvalue of $\Sigma_n$. For $n=2$ this occurs for any $d, \,V(W)$ and $\tau_1<\tau_2$, which follows from the description of $\Omega_2$ in (\ref{Eq: Kov.matrix Omega subkrit. Reg}) and the known eigenvalues of $\Sigma_2$ from  Example \ref{Bsp: char. Poly. Sigma2 subkrit. Reg.}. We have
\begin{align*}
\bar{S}_2  
= \frac{V(W)d\kappa_d}{2} \left( \frac{x_2+x_1}{4x_1x_2} + \sqrt{\frac{14x_1^2x_2^2+x_2^4+x_1^4}{16x_1^2x_2^2(x_1+x_2)^2} }\right)
 = \lambda_2
\end{align*}
and for $\emph{\b{S}}_2$ one gets

\begin{align*}
&\frac{V(W)d\kappa_d}{2}\cdot\frac{\frac{(x_1-x_2)^2}{4x_1x_2(x_1+x_2)^2}}{\frac{(x_1+x_2)^2}{4x_1x_2(x_1+x_2)}+ \sqrt{\frac{14x_1^2x_2^2+x_1^4+x_2^4}{16x_1^2x_2^2(x_1+x_2)^2}}} =\frac{V(W)d\kappa_d}{2}\cdot \frac{\frac{(x_1-x_2)^2}{4x_1x_2(x_1+x_2)^2}}{\frac{(x_1+x_2)^2+ \sqrt{14x_1^2x_2^2+x_1^4+x_2^4}}{\sqrt{16x_1^2x_2^2(x_1+x_2)^2}} } \\
&=\frac{V(W)d\kappa_d}{2}\cdot \frac{\left((x_1+x_2)^2+\sqrt{14x_1^2x_2^2+x_1^4+x_2^4}\right)\left((x_1+x_2)^2-\sqrt{14x_1^2x_2^2+x_1^4+x_2^4}\right)}{4x_1x_2(x_1+x_2)\left( (x_1+x_2)^2+\sqrt{14x_1^2x_2^2+x_1^2+x_2^2}\right)}  = \lambda_1.
\end{align*}

It would be quite interesting to investigate in a further analysis, if $\emph{\b{S}}_n = \lambda_1$ and $\bar{S}_n= \lambda_n$ is true for $n\geq 3$ or whether the bounds are not tight in general. 

\begin{conj} \label{Verm: verschied. EWe subkrit. Reg.}
Let $\Sigma_n$ be as in (\ref{Eq: Kov.matrix Laengen Potenz Funktional}) in the subcritical regime for $n \geq 2$ with eigenvalues $\lambda_1 \leq \ldots \leq \lambda_n$. Then $\lambda_i \neq \lambda_j $ for all $i \neq j \in [n]$.
\end{conj}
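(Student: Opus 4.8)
The plan is to derive the conjecture from the fact that the Cauchy matrix $\Omega_n$ of (\ref{Eq: Kov.matrix Omega subkrit. Reg}) is strictly totally positive, together with the classical spectral theory of such matrices. By (\ref{Bem: EWe, EVen Omega, Sigma subkrit. Reg.}), $\Sigma_n$ and $\Omega_n$ have the same eigenvectors and the eigenvalues of $\Sigma_n$ are exactly those of $\Omega_n$ multiplied by the positive constant $V(W)d\kappa_d/2$; hence the eigenvalues of $\Sigma_n$ are pairwise distinct if and only if those of $\Omega_n$ are. Recall also that in the subcritical regime the standing hypothesis $-d/2 < \tau_1 < \dots < \tau_n$ gives $0 < x_1 < x_2 < \dots < x_n$ for $x_i = \tau_i + d/2$. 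It therefore suffices to prove that $\Omega_n$ has $n$ pairwise distinct eigenvalues.

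First I would verify that $\Omega_n$ is strictly totally positive, i.e.\ that every square submatrix of $\Omega_n$ has strictly positive determinant. Fixing row indices $i_1 < \dots < i_k$ and column indices $j_1 < \dots < j_k$, the corresponding $k \times k$ submatrix is again a Cauchy matrix, now associated with the two strictly increasing sequences $x_{i_1} < \dots < x_{i_k}$ and $x_{j_1} < \dots < x_{j_k}$. The Cauchy determinant formula (the submatrix version of Lemma \ref{Satz: Det. mod. Matrix subkrit. Reg.}, cf.\ \cite[0.9.12.1]{Horn}) evaluates this determinant as
\[
\det\left( \frac{1}{x_{i_a}+x_{j_b}} \right)_{1 \le a, b \le k}
= \frac{\prod_{1 \le a < a' \le k}(x_{i_{a'}} - x_{i_a}) \, \prod_{1 \le b < b' \le k}(x_{j_{b'}} - x_{j_b})}{\prod_{1 \le a, b \le k}(x_{i_a} + x_{j_b})},
\]
and every factor appearing here is strictly positive, since the $x_i$ are positive and strictly increasing. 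Thus all minors of $\Omega_n$ are positive.

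Next I would invoke the Gantmacher--Krein theory of oscillatory matrices. A strictly totally positive matrix is oscillatory, and an $n \times n$ oscillatory matrix has $n$ pairwise distinct, strictly positive eigenvalues $\lambda_1 > \lambda_2 > \dots > \lambda_n > 0$; this is a classical theorem (see, e.g., the monograph of Gantmacher and Krein on oscillation matrices and kernels, or Pinkus's book on totally positive matrices). Applied to $\Omega_n$ this shows that its eigenvalues are pairwise distinct, and hence so are the eigenvalues of $\Sigma_n = (V(W)d\kappa_d/2)\,\Omega_n$, which is precisely the assertion of Conjecture \ref{Verm: verschied. EWe subkrit. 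Reg.}. As a byproduct one also obtains that the eigenvector of $\Sigma_n$ belonging to the $k$-th smallest eigenvalue has exactly $n-k$ sign changes among its coordinates, information that may be useful for the open problem on eigenspaces.

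The main obstacle is that this route relies on the non-elementary Gantmacher--Krein theorem rather than on a direct computation; a self-contained proof would have to reprove at least the relevant special case, for instance by establishing the sign-change (interlacing) behaviour of the eigenvectors of a strictly totally positive matrix, or by arguing directly that $\chi(\Sigma_n)$ can have no repeated root, e.g.\ from the explicit coefficient formula in Corollary \ref{Satz: char. Poly. Sigma subkrit. Reg. }. It should also be noted that strict monotonicity of the $\tau_i$ is genuinely needed: if two of the $x_i$ coincided, $\Omega_n$ would be singular with $0$ as a repeated eigenvalue, so this hypothesis cannot be dropped.
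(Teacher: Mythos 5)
The paper does not prove this statement: it is recorded as Conjecture \ref{Verm: verschied. EWe subkrit. Reg.} and left open (the surrounding text only establishes the characteristic polynomial, bounds on the eigenvalues, and the explicit $n=2$ case). Your argument is therefore not an alternative to the paper's proof but, as far as I can check, an actual resolution of the conjecture, and it is correct. The two ingredients both hold: (a) any $k\times k$ submatrix of $\Omega_n$ with rows $i_1<\dots<i_k$ and columns $j_1<\dots<j_k$ is again a Cauchy matrix built from the strictly increasing positive parameters $x_{i_a}$ and $x_{j_b}$ (positivity and strict monotonicity follow from $-d/2<\tau_1<\dots<\tau_n$), so the Cauchy determinant formula gives a strictly positive minor and $\Omega_n$ is strictly totally positive; (b) the Gantmacher--Krein theorem (see Gantmacher--Krein's monograph on oscillation matrices, Ando's survey, or Pinkus's book on totally positive matrices) states that a strictly totally positive $n\times n$ matrix has $n$ simple, strictly positive eigenvalues. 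Since $\Sigma_n=(V(W)d\kappa_d/2)\,\Omega_n$ with a positive scalar, the simplicity of the spectrum transfers. Your conclusion is consistent with Example \ref{Bsp: char. Poly. Sigma2 subkrit. Reg.}, where for $n=2$ the two eigenvalues differ by $\sqrt{x_1^4+14x_1^2x_2^2+x_2^4}>0$. The only caveat is the one you name yourself: the argument imports a nontrivial classical theorem rather than deriving distinctness from the explicit coefficients of $\chi(\Sigma_n)$ in Corollary \ref{Satz: char. Poly. Sigma subkrit. Reg. }; but that theorem is standard and its hypotheses are verified, so there is no gap. The sign-change structure of the eigenvectors that comes for free from the oscillatory theory is indeed directly relevant to the eigenspace questions the paper lists as open, and the same reasoning would give the smallest eigenvalue via $\det(\Sigma_n)$ divided by the product of the larger ones, so this route is worth developing further.
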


Due to the structure of $\Sigma_n$ we determine its LU decomposition as: 

\begin{thm} \label{Satz: LR-Zerleg. subkrit. Reg.}
Let $\Sigma_n = (\sigma_{ij})$ be as in (\ref{Eq: Kov.matrix Laengen Potenz Funktional}) in the subcritical regime for $n \geq 2$. The $LU$ decomposition $\Sigma_n= L U$ is given by 
$L = (l_{ij})$ and $U = (u_{ij})$  with entries 
\begin{align} \label{Eq: L-Matrix subkrit. Reg.}
l_{ij} = \left\{
\begin{array}{ll} 
0 &  \text{ for }\, i < j \\
\frac{2x_j \prod_{k=1}^{j-1} (x_j + x_k)(x_i - x_k)}{\prod_{ k = 1}^{j-1} ( x_j - x_k ) \prod_{k=1}^{j}(x_i+ x_k)}& \text{ for }\, i\geq j, \\
\end{array}
\right. 
,
u_{ij} = \left\{
\begin{array}{ll}
0 &  \text{ for } \,i > j \\
\frac{V(W)d\kappa_d\prod_{k=1}^{i-1}(x_i-x_k)(x_j-x_k)}{2\prod_{ k = 1}^{i-1} ( x_i + x_k) \prod_{k=1}^{i}(x_j + x_k)}& \text{ for } \,i\leq j. \\
\end{array}
\right.  
\end{align} 
\end{thm}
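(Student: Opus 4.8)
The plan is to verify directly that the product of the proposed matrices $L = (l_{ij})$ and $U = (u_{ij})$ equals $\Sigma_n$, and then to appeal to uniqueness. Since $\Sigma_n$ is regular by Corollary~\ref{Kor: Det. Kov.matrix subkrit. Reg.} and the proposed decomposition uses $P = I_n$, Theorem~\ref{Alg: LR} tells us the LU decomposition is unique; hence it is enough to check that $L$ is a normalized lower triangular matrix, that $U$ is upper triangular, and that $LU = \Sigma_n$. The triangular shapes are immediate from the formulas, and the normalization $l_{ii} = 1$ follows by inspection: in the expression for $l_{ii}$ the product $\prod_{k=1}^{i-1}(x_i - x_k)$ in the numerator cancels the one in the denominator, leaving $2x_i/(x_i + x_i) = 1$. (Here we use $x_i = \tau_i + d/2 > 0$ and that the $x_i$ are pairwise distinct, so all denominators are nonzero.) Throughout I would divide out the overall constant $V(W)d\kappa_d/2$ and work with the Cauchy matrix $\Omega_n = (1/(x_i + x_j))$ of~(\ref{Eq: Kov.matrix Omega subkrit. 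Reg}), reinstating the constant (which lives in $U$) at the very end.

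For the product, fix $i, j \in [n]$ and put $p = \min\{i, j\}$. Since $L$ is lower triangular and $U$ upper triangular, $(LU)_{ij} = \sum_{k=1}^{p} l_{ik} u_{kj}$, where from now on $l_{ik}, u_{kj}$ denote the entries of the rescaled factors. A one-line computation shows that in $l_{ik} u_{kj}$ every factor of the form $x_k \pm x_m$ cancels, leaving
\[
l_{ik}\, u_{kj} = \frac{2 x_k \prod_{m=1}^{k-1}(x_i - x_m)(x_j - x_m)}{\prod_{m=1}^{k}(x_i + x_m)\prod_{m=1}^{k}(x_j + x_m)} =: t_k .
\]
The crux is that the $t_k$ telescope. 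Setting
\[
R_p := \frac{\prod_{m=1}^{p}(x_i - x_m)(x_j - x_m)}{(x_i + x_j)\prod_{m=1}^{p}(x_i + x_m)(x_j + x_m)} ,
\]
one checks that $R_0 = 1/(x_i + x_j)$ and that $R_{p-1} - R_p = t_p$ for every $p \geq 1$; the latter reduces, after pulling out the common factors, to the elementary identity $1 - \frac{(x_i - x_p)(x_j - x_p)}{(x_i + x_p)(x_j + x_p)} = \frac{2x_p(x_i + x_j)}{(x_i + x_p)(x_j + x_p)}$. Hence $\sum_{k=1}^{p} t_k = R_0 - R_p = \frac{1}{x_i + x_j} - R_p$. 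Finally, for $p = \min\{i, j\}$ the numerator of $R_p$ contains the vanishing factor $x_i - x_i$ (if $i \leq j$) or $x_j - x_j$ (if $j \leq i$), so $R_p = 0$ and $(LU)_{ij} = 1/(x_i + x_j) = \omega_{ij}$. Reinstating the constant $V(W)d\kappa_d/2$ gives $LU = \Sigma_n$.

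The only step that is not pure bookkeeping is guessing the telescoping quantity $R_p$; once it is written down, everything is forced. A more structural alternative, if one prefers, is to perform a single step of Gaussian elimination on $\Omega_n$ and observe that the resulting Schur complement is the Cauchy matrix $\bigl(1/(x_i + x_j)\bigr)_{2 \le i,j \le n}$ conjugated by the diagonal matrix $\diag\bigl((x_i - x_1)/(x_i + x_1)\bigr)_{2 \le i \le n}$; an induction on $n$ then produces exactly the stated entries of $L$ and $U$. I expect the direct verification to be the cleaner write-up, with the Schur-complement recursion serving as a sanity check and as motivation for the closed forms.
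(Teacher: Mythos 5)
Your proof is correct and follows essentially the same route as the paper: both verify $LU=\Sigma_n$ entrywise by summing $l_{ik}u_{kj}$ over $k\le\min\{i,j\}$, and your telescoping identity $R_{p-1}-R_p=t_p$ is precisely the paper's product formula (\ref{Eq: Formal Produkt}) divided by $(x_i+x_j)\prod_{k=1}^q(x_i+x_k)(x_j+x_k)$, which the paper establishes by induction on $q$ instead. The extra touches (rescaling to the Cauchy matrix $\Omega_n$, checking $l_{ii}=1$, and invoking uniqueness via Theorem \ref{Alg: LR}) are fine but not needed beyond what the paper does.
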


\begin{proof} 

We first show the following formula for any $q \in \N, q\geq 1$ and $x_i, x_j, x_1, \ldots, x_q \in \R$: 
\begin{align} \label{Eq: Formal Produkt}
\nonumber\prod_{m=1}^{q} (x_i - x_m)(x_j-x_m) &=  \prod_{m=1}^q(x_i+x_m)(x_j+x_m)  \\
&-\sum_{m=1}^{q} 2x_m(x_i+x_j)\prod_{k =m+1}^q(x_i+x_k)(x_j+x_k) \prod_{k=1}^{m-1}(x_i-x_k)(x_j-x_k).
\end{align} 
We perform an induction on $q\geq 1$, where the base case $q=1$ 
is trivial.
Next one sees
\begin{align*}
&
\prod_{m=1}^q (x_i - x_m)(x_j-x_m) = (x_i -x_q)(x_j-x_q) \prod_{m=1}^{q-1} (x_i -x_m)(x_j-x_m) 
\\
=& 
(x_i +x_q)(x_j+x_q)\prod_{m=1}^{q-1} (x_i -x_m)(x_j-x_m)  - (2x_ix_q+ 2x_jx_q)\prod_{m=1}^{q-1} (x_i -x_m)(x_j-x_m) 
\\
\overset{\textrm{i.h.}}{=}
& 
- 2x_q(x_i+x_j)\prod_{m=1}^{q-1} (x_i-x_m)(x_j-x_m) + (x_i + x_q)(x_j+x_q)\Biggl(\prod_{m=1}^{q-1} (x_i+x_m)(x_j+x_m)  
 \\
 &- \sum_{m=1}^{q-1} 2x_m(x_i+x_j)\prod_{k = m+1}^{q-1} (x_i+x_k)(x_j+x_k) \prod_{k = 1}^{m-1}(x_i-x_k)(x_j-x_k) \Biggr)
 \\
=& -  2x_q(x_i+x_j) \prod_{m=1}^{q-1} (x_i-x_m)(x_j-x_m)
  +  \prod_{m=1}^{q} (x_i+x_m)(x_j+x_m)   \\
  &- \sum_{m=1}^{q-1} 2x_m(x_i+x_j) \prod_{k = m+1}^{q} (x_i+x_k)(x_j+x_k) \prod_{k = 1}^{m-1}(x_i-x_k)(x_j-x_k)
 \\
   =& \prod_{m=1}^{q} (x_i+x_m)(x_j+x_m)  - \sum_{m=1}^{q} 2x_m(x_i+x_j) \prod_{k = m+1}^{q} (x_i+x_k)(x_j+x_k) \prod_{k = 1}^{m-1}(x_i-x_k)(x_j-x_k).
\end{align*}
We compute the entries $\sigma_{ij} = \sum_{m=1} ^n l_{im}u_{mj}$ of the product $LU$. Let $q = \min \{i,j\}$. It is $ l_{im}u_{mj} = 0$ for $m \in \{ q+1, \ldots , n\}$. Thus, 
\begin{align*}
    \sigma_{ij} &= \sum_{m=1} ^q l_{im}u_{mj} =  \sum_{m=1} ^q  \frac{2x_m \prod_{k=1}^{m-1} (x_m + x_k)(x_i - x_k)}{\prod_{ k = 1}^{m-1} ( x_m - x_k ) \prod_{k=1}^{m}(x_i+ x_k)}\cdot \frac{V(W)d\kappa_d\prod_{k=1}^{m-1}(x_m-x_k)(x_j-x_k)}{2\prod_{ k = 1}^{m-1} ( x_m + x_k) \prod_{k=1}^{m}(x_j + x_k)} \\
    &= \frac{V(W) d\kappa_d}{2} \sum_{m=1} ^q  \frac{2x_m \prod_{k=1}^{m-1}(x_i - x_k)}{\prod_{k=1}^{m}(x_i+ x_k)} \cdot \frac{\prod_{k=1}^{m-1}(x_j-x_k)}{ \prod_{k=1}^{m}(x_j + x_k)} \\
    &= \frac{V(W) d\kappa_d}{2} \cdot \frac{\sum_{m=1} ^q 2x_m \prod_{k=1}^{m-1}(x_i - x_k)(x_j-x_k)\prod_{k=m+1}^{q}(x_i+ x_k) (x_j + x_k)}{\prod_{k=1}^{q}(x_i+ x_k) (x_j + x_k)} \\
    & \overset{(\ref{Eq: Formal Produkt})}{=}  \frac{V(W) d\kappa_d}{2}  \cdot \frac{\prod_{m=1} ^q (x_i+x_m)(x_j+x_m)-\prod_{m=1} ^q(x_i-x_m)(x_j-x_m)}{(x_i+x_j)\prod_{k=1}^{q}(x_i+ x_k) (x_j + x_k)} \\
    & =\frac{V(W) d\kappa_d}{2} \cdot \frac{\prod_{m=1} ^q (x_i+x_m)(x_j+x_m)}{(x_i+x_j)\prod_{k=1}^{q}(x_i+ x_k) (x_j + x_k)}
    =\frac{V(W) d\kappa_d}{2(x_i+x_j)}.
\end{align*}

These are the entries of $\Sigma_n$ given as in (\ref{Eq: Kov.matrix Laengen Potenz Funktional subcritical}).
\end{proof}  

\begin{cor} \label{Satz: Cholesky-Zerleg. subkrit. Reg.}
Let $\Sigma_n = (\sigma_{ij})$ be as in (\ref{Eq: Kov.matrix Laengen Potenz Funktional}) in the subcritical regime for $n \geq 2$. Then the Cholesky decomposition $ \Sigma_n =  GG^t $ is given by $G = (g_{ij})$ with entries
\begin{align*} 
g_{ij} = \left\{
\begin{array}{ll}
0 & \, \emph{ for } i < j, \\
\sqrt{\frac{V(W)d\kappa_d}{2}}\frac{\sqrt{2x_j}\prod_{k=1}^{j-1}(x_i-x_k)}{\prod_{k=1}^j (x_i+x_k)}  & \, \emph{ for } i\geq j. \\
\end{array}
\right. 
\end{align*}
\end{cor}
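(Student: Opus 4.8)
The plan is to derive the Cholesky decomposition directly from the already-established LU decomposition of $\Sigma_n$ in Theorem~\ref{Satz: LR-Zerleg. subkrit. Reg.}, using the general relationship between the two decompositions of a symmetric positive definite matrix that was recorded at the end of Section~\ref{Kap: Positive semidefinite symmetric matrices}. Since $\Sigma_n$ is positive definite by Corollary~\ref{Kor: Det. Kov.matrix subkrit. Reg.}(ii), the LU decomposition $\Sigma_n = LU$ is unique with $P = I_n$, and the Cholesky factor is $G = L D^{1/2}$ where $D^{1/2}$ is the diagonal matrix with entries $\sqrt{d_{ii}} = \sqrt{u_{ii}}$. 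So the first step is to read off $u_{ii}$ from~(\ref{Eq: L-Matrix subkrit. Reg.}): setting $i = j$ there gives
\[
u_{ii} = \frac{V(W)d\kappa_d\prod_{k=1}^{i-1}(x_i-x_k)^2}{2\prod_{k=1}^{i-1}(x_i+x_k)\prod_{k=1}^{i}(x_i+x_k)}
= \frac{V(W)d\kappa_d}{2}\cdot\frac{2x_i\prod_{k=1}^{i-1}(x_i-x_k)^2}{2x_i\left(\prod_{k=1}^{i}(x_i+x_k)\right)^2/(2x_i)},
\]
which I would simplify carefully to the clean form $u_{ii} = \dfrac{V(W)d\kappa_d}{2}\cdot\dfrac{2x_i\left(\prod_{k=1}^{i-1}(x_i-x_k)\right)^2}{\left(\prod_{k=1}^{i}(x_i+x_k)\right)^2}$, so that $\sqrt{u_{ii}} = \sqrt{\tfrac{V(W)d\kappa_d}{2}}\cdot\dfrac{\sqrt{2x_i}\prod_{k=1}^{i-1}(x_i-x_k)}{\prod_{k=1}^{i}(x_i+x_k)}$ (all factors are positive since $x_1 < \dots < x_n$ are positive).

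The second step is to multiply $L$ (column $j$ scaled by $\sqrt{u_{jj}}$) to obtain $g_{ij} = l_{ij}\sqrt{u_{jj}}$ for $i \ge j$ and $g_{ij} = 0$ for $i < j$. Plugging in $l_{ij}$ from~(\ref{Eq: L-Matrix subkrit. Reg.}) and the formula for $\sqrt{u_{jj}}$ just obtained, I expect massive cancellation: the factor $\prod_{k=1}^{j-1}(x_j+x_k)$ in the numerator of $l_{ij}$ cancels against part of the denominator $\prod_{k=1}^{j}(x_j+x_k)$ of $\sqrt{u_{jj}}$, leaving only an $\tfrac{1}{x_j + x_j} = \tfrac{1}{2x_j}$ type factor; the $\prod_{k=1}^{j-1}(x_j - x_k)$ terms cancel against the denominator $\prod_{k=1}^{j-1}(x_j - x_k)$ of $l_{ij}$; and the $2x_j$ in $l_{ij}$ combines with $\sqrt{2x_j}$ to give $\sqrt{2x_j}$. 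After dust settles one is left with exactly
\[
g_{ij} = \sqrt{\tfrac{V(W)d\kappa_d}{2}}\,\frac{\sqrt{2x_j}\,\prod_{k=1}^{j-1}(x_i-x_k)}{\prod_{k=1}^{j}(x_i+x_k)},
\]
as claimed.

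Alternatively — and this is the cross-check I would include if the LU-to-Cholesky bookkeeping gets unwieldy — one can verify the formula directly from Algorithm~\ref{Algo: Korrektheit Cholesky Alg. pos. semidef.}, i.e. by induction on the column index $j$, showing that the claimed $g_{ij}$ satisfy $\sum_{k=1}^{j} g_{ik}g_{jk} = \sigma_{ij} = \tfrac{V(W)d\kappa_d}{2(x_i+x_j)}$. This reduces to the identity
\[
\sum_{m=1}^{q} 2x_m\,\frac{\prod_{k=1}^{m-1}(x_i-x_k)(x_j-x_k)}{\prod_{k=1}^{m}(x_i+x_k)(x_j+x_k)} = \frac{1}{x_i+x_j} - \frac{1}{x_i+x_j}\cdot\frac{\prod_{m=1}^{q}(x_i-x_m)(x_j-x_m)}{\prod_{m=1}^{q}(x_i+x_m)(x_j+x_m)}
\]
with $q = \min\{i,j\}$, which is precisely formula~(\ref{Eq: Formal Produkt}) already proven in the course of Theorem~\ref{Satz: LR-Zerleg. subkrit. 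Reg.}; when $q = j \le i$ the subtracted product vanishes and one gets $\sigma_{ij}$ on the nose. The main obstacle is purely clerical: keeping the four nested products ($\prod(x_i - x_k)$, $\prod(x_j - x_k)$, $\prod(x_i + x_k)$, $\prod(x_j + x_k)$ with upper limits $j-1$ versus $j$) straight through the cancellation, and handling the degenerate diagonal term $k = j$ where $x_j - x_j = 0$ would appear were the index range off by one. No genuinely new idea is needed beyond what Theorem~\ref{Satz: LR-Zerleg. subkrit. Reg.} already supplies; this corollary is essentially a repackaging of that result via the $G = LD^{1/2}$ identity.
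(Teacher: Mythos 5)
Your proposal is correct and follows exactly the paper's route: the paper also derives $G = LD^{1/2}$ from the LU decomposition of Theorem \ref{Satz: LR-Zerleg. subkrit. Reg.}, with $d_{ii}=u_{ii}=\frac{V(W)d\kappa_d\prod_{k=1}^{i-1}(x_i-x_k)^2}{4x_i\prod_{k=1}^{i-1}(x_i+x_k)^2}$ matching your simplification, and leaves the final cancellation as "a straightforward calculation" that you carry out explicitly. Your alternative cross-check via Algorithm \ref{Algo: Korrektheit Cholesky Alg. pos. semidef.} and identity (\ref{Eq: Formal Produkt}) is also valid but not needed.
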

 
\begin{proof}
There exists a unique Cholesky decomposition of $\Sigma_n$ due to Algorithm \ref{Algo: Korrektheit Cholesky Alg. pos. semidef.}. According to the remark after this algorithm, it can be derived from the LU decomposition of $\Sigma_n$. For this we define a diagonal matrix $D= (d_{ij})$ using entries from $U$ by
\begin{align*}
d_{ij} &= \left\{
\begin{array}{ll}
0&   \text{ for } \, j\neq i, \\
\frac{V(W)d\kappa_d\prod_{k=1}^{j-1}(x_j-x_k)^2}
{4x_j\prod_{ k = 1}^{j-1} ( x_j + x_k)^2 } &    \text{ for } \, j= i .\\
\end{array}
\right. 
\end{align*}
Then $G$ equals $G=LD^{\frac{1}{2}}$ where $L$ is given as in Theorem \ref{Satz: LR-Zerleg. subkrit. Reg.}. A straightforward calculation concludes the proof.
\end{proof}

\begin{exa} \label{Bsp: Cholesky-Zerleg. subkrit. Reg.}
Consider $\Sigma_2$ for $\tau_1=0$ and $\tau_2 \in \R$. 
Then
\begin{align} \label{Sigma kritisch}
\Sigma_2 = GG^t=
\sqrt{\frac{V(W)d\kappa_d}{2}}\left( \begin{array}{cccc}   
\frac{1}{\sqrt{d}}& 0 \\
\frac{\sqrt{2x_2}}{a_2}& \frac{\tau_2}{\sqrt{2x_2}a_2} \\
\end{array}\right)\sqrt{\frac{V(W)d\kappa_d}{2}}\left( \begin{array}{cccc}   
\frac{1}{\sqrt{d}}&\frac{\sqrt{2x_2}}{a_2} \\
0& \frac{\tau_2}{\sqrt{2x_2}a_2} \\
\end{array}\right).
\end{align}
\end{exa}
 
\section{Critical Regime} \label{Kap: Krit. Reg.}

Concluding the discussions of the last two sections we consider $\Sigma_n$  in the critical regime (i.e. $t\delta_t^d \rightarrow c \in (0, \infty)$) for $n\geq 2$. Recall that 
\begin{align*}
\Sigma_n
=
\begin{cases}
V(W) d\kappa_d \biggl( \frac{a_i a_j+ 2cd\kappa_d (x_i+x_j)}{2(x_i+x_j)a_i a_j} \biggr) 
&\text{ for } c \in (0,\, 1],
\\
V(W) d\kappa_d \biggl( \frac{a_i a_j+ 2cd\kappa_d (x_i+x_j)}{2c(x_i+x_j)a_i a_j} \biggr) 
&\text{ for }  c \in (1, \,\infty).
\end{cases} 
\end{align*}

If matrices from various regimes appear, they are labeled as $\Sigma_n^{\text{sb}}$, $\,\Sigma_n^{\text{sp}}$, and $\Sigma_n^{\text{cr}}$  for the subcritical, supercritical, and critical regime. The abbreviations $x_i = \tau_i +d/2 \, \,\text{ and } \,\, a_i = \tau_i +d \, \text{ for }\, i \in [n]$ used before are taken up again. The  matrices 
$\Sigma_n^{\text{sb}}$ 
by 
(\ref{Eq: Kov.matrix Laengen Potenz Funktional supercritical}) 
and 
$\Sigma_n^{\text{sp}}$ 
by
(\ref{Eq: Kov.matrix Laengen Potenz Funktional subcritical})
using the same data $x_i$ as well as $a_i$
are called \textit{corresponding matrices} of $\Sigma_n^{\text{cr}}$. 
Recall that by Theorem \ref{Satz: Kov.matrix Laengen Potenz Funktional} the matrix $\Sigma_n=\Sigma_n^{\text{cr}}$ is always a sum of $\Sigma_n^{\text{sp}}$ and $\Sigma_n^{\text{sb}}$ using certain coefficients.

The results in the subcritical and supercritical regime yield:
\begin{cor} \label{Satz: pos. Def., Inv., Det. krit. Reg.}
Let $\Sigma_n$ be as in (\ref{Eq: Kov.matrix Laengen Potenz Funktional}) in the critical regime for $n \geq 2$. 
Then 
(i) $\emph{rank}(\Sigma_n) = n$; 
(ii) $\det (\Sigma_n) > 0$;
(iii) $\Sigma_n$ is positive definite. 
\end{cor}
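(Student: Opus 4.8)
The plan is to deduce all three claims directly from the decomposition $\Sigma_n = \Sigma_n^{\mathrm{cr}}$ as a positive combination of the corresponding matrices $\Sigma_n^{\mathrm{sb}}$ and $\Sigma_n^{\mathrm{sp}}$, together with what was already established for those two matrices in the subcritical and supercritical regimes. Recall from Theorem \ref{Satz: Kov.matrix Laengen Potenz Funktional} that in the critical regime, for $c \in (0,1]$ one has $\Sigma_n = \Sigma_n^{\mathrm{sb}} + c\,\Sigma_n^{\mathrm{sp}}$, and for $c \in (1,\infty)$ one has $\Sigma_n = \tfrac{1}{c}\Sigma_n^{\mathrm{sb}} + \Sigma_n^{\mathrm{sp}}$; in both cases $\Sigma_n = A + B$ where $A$ is a positive multiple of $\Sigma_n^{\mathrm{sb}}$ and $B$ a positive multiple of $\Sigma_n^{\mathrm{sp}}$, and a positive scalar multiple of a positive (semi)definite symmetric matrix is again positive (semi)definite symmetric of the same rank.

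For (iii): by Corollary \ref{Kor: Det. Kov.matrix subkrit. Reg.}(ii), $\Sigma_n^{\mathrm{sb}}$ is positive definite, hence so is $A$; and $\Sigma_n^{\mathrm{sp}}$ is positive semidefinite (it is a covariance matrix, cf.\ the supercritical discussion), hence so is $B$. By Lemma \ref{Lemma: Courant-Fisher}(ii), $\Sigma_n = A + B$ is positive definite. For (i): a positive definite $n \times n$ matrix has full rank $n$, so $\mathrm{rank}(\Sigma_n) = n$ is immediate from (iii); alternatively one can invoke Lemma \ref{Lemma: Courant-Fisher}(iii) with $\mathrm{rank}(A) = n$. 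For (ii): a positive definite symmetric matrix has strictly positive eigenvalues, so $\det(\Sigma_n) = \prod_{i=1}^n \lambda_i > 0$; this also follows from (iii) directly.

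The only thing requiring a word of care is the order in which the items are proved: it is cleanest to establish (iii) first and then read off (i) and (ii) as consequences, rather than treating them as independent. There is essentially no obstacle here — the entire content is bookkeeping of which corresponding matrix is positive definite versus merely positive semidefinite, and then a single application of Lemma \ref{Lemma: Courant-Fisher}(ii). The slightly delicate point, if any, is simply to note that the coefficients $c$, $1/c$, and $1$ appearing in both sub-cases of the critical regime are strictly positive, so that scaling preserves definiteness; once that is observed, the proof is a one-liner for each part.
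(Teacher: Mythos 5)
Your proof is correct and follows essentially the same route as the paper: both rest on the decomposition $\Sigma_n^{\text{cr}}=A+B$ from Theorem \ref{Satz: Kov.matrix Laengen Potenz Funktional} combined with Lemma \ref{Lemma: Courant-Fisher} and the known properties of $\Sigma_n^{\text{sb}}$ and $\Sigma_n^{\text{sp}}$. The only (immaterial) difference is that the paper invokes Lemma \ref{Lemma: Courant-Fisher}(iii) to get $\operatorname{rank}(\Sigma_n)=n$ first, whereas you invoke part (ii) to get positive definiteness first and then read off the rank and the determinant.
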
 
\begin{proof} 
Theorem \ref{Satz: Kov.matrix Laengen Potenz Funktional} and
Corollaries \ref{Lemma: Courant-Fisher}(iii), \ref{Kor: Rang superkrit. Reg.}(i), \ref{Kor: Det. Kov.matrix subkrit. Reg.}(ii)  yield for $c\in (0,\,1]$ that: 
\[
\text{rank}(\Sigma_n^{\text{cr}}) 
\geq 
\text{max}\{\text{rank}(c\Sigma_n^{\text{sp}}) , \, \text{rank}(\Sigma_n^{\text{sb}})\}= \text{max}\{1, \, n\} = n.
\]
For $c\in (1,\, \infty)$ a similar argument proves the claim.
\end{proof}

In \cite[Proposition 3.4]{Reitzner} the authors proved already the fact above. Additionally, we are able to give formulas for the inverse matrix and determinant of $\Sigma_n^{\text{cr}}$ using the fact that it is a rank-$1$-correction of $\Sigma_n^{\text{sb}}$ as well as the already known results about the adjoint and the inverse matrix in the subcritical regime. 

Indeed, the inverse matrix of $\Sigma_n^{\text{cr}}$ can be computed with the \emph{Woodbury matrix identity} (see, e.g., \cite[0.7.4.2.]{Horn}). It implies the following theorem by observing that
$c\Sigma_n^{\text{sp}} = vv^t$ where $v =\sqrt{c V(W)}d\kappa_d(1/a_1, \, \ldots, \, 1/a_n)^t$, $\Sigma_n^{\text{sp}} = ww^t$ where $w =\sqrt{ V(W)}d\kappa_d(1/a_1, \, \ldots, \, 1/a_n)^t$,
\[
(\Sigma_n^{\text{cr}})^{-1} 
= 
(v \cdot 1 \cdot v^t+\Sigma_n^{\text{sb}})^{-1} 
\text{ for } c \in (0,\, 1],
\text{ and } 
(\Sigma_n^{\text{cr}})^{-1} 
= 
(w \cdot 1 \cdot w^t+1/c\Sigma_n^{\text{sb}})^{-1}
\text{ for } c \in (1,\, \infty).
\] 
\begin{thm} \label{Satz: Inverse krit. Regime}
Let $\Sigma_n^{\text{cr}}$ be as in (\ref{Eq: Kov.matrix Laengen Potenz Funktional}) in the critical regime for $n \geq 2$. For $c \in (0,\, 1]$ we have 
\begin{align*}
    (\Sigma_n^{\text{cr}})^{-1} = (\Sigma_n^{\text{sb}})^{-1} - (\Sigma_n^{\text{sb}})^{-1}v (1+v^t (\Sigma_n^{\text{sb}})^{-1} v)^{-1} v^t (\Sigma_n^{\text{sb}})^{-1} \nonumber
\end{align*}
and for $c \in (1,\, \infty)$ one has
\begin{align*}
    (\Sigma_n^{\text{cr}})^{-1} = (1/c\Sigma_n^{\text{sb}})^{-1} - (1/c\Sigma_n^{\text{sb}})^{-1}w (1+w^t (1/c\Sigma_n^{\text{sb}})^{-1} w)^{-1} w^t (1/c\Sigma_n^{\text{sb}})^{-1} .\nonumber
\end{align*}
\end{thm}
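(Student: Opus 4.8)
The plan is to apply the Sherman--Morrison rank-$1$ special case of the Woodbury matrix identity \cite[0.7.4.2.]{Horn} to the decomposition of $\Sigma_n^{\text{cr}}$ into a scaled copy of $\Sigma_n^{\text{sb}}$ plus a single outer product. First I would record that $\Sigma_n^{\text{sb}}$ is invertible: by Corollary \ref{Kor: Det. Kov.matrix subkrit. Reg.}(ii) it is positive definite, hence so is $\frac{1}{c}\Sigma_n^{\text{sb}}$ for every $c>0$; in particular $(\Sigma_n^{\text{sb}})^{-1}$ and $(\frac{1}{c}\Sigma_n^{\text{sb}})^{-1}$ exist and are themselves positive definite.

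Next I would verify the outer-product identities $c\Sigma_n^{\text{sp}} = vv^t$ and $\Sigma_n^{\text{sp}} = ww^t$ asserted just before the theorem. This is a one-line entrywise check: the $(i,j)$ entry of $vv^t$ is $(\sqrt{cV(W)}\,d\kappa_d)^2/(a_i a_j) = cV(W)d^2\kappa_d^2/(a_i a_j)$, which is exactly $c$ times the $(i,j)$ entry of $\Sigma_n^{\text{sp}} = V(W)\bigl(d^2\kappa_d^2/(a_i a_j)\bigr)$ from Theorem \ref{Satz: Kov.matrix Laengen Potenz Funktional}, and the identity for $w$ is the case $c=1$. By the same theorem one then has $\Sigma_n^{\text{cr}} = \Sigma_n^{\text{sb}} + c\Sigma_n^{\text{sp}} = \Sigma_n^{\text{sb}} + vv^t$ when $c\in(0,1]$, and $\Sigma_n^{\text{cr}} = \frac{1}{c}\Sigma_n^{\text{sb}} + \Sigma_n^{\text{sp}} = \frac{1}{c}\Sigma_n^{\text{sb}} + ww^t$ when $c\in(1,\infty)$.

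It then remains to apply the Woodbury identity with the $1\times 1$ middle factor equal to $1$. For $c\in(0,1]$ put $A=\Sigma_n^{\text{sb}}$: the scalar capacitance $1+v^t A^{-1} v$ is strictly positive because $A^{-1}$ is positive definite and hence $v^t A^{-1} v \geq 0$, so the identity $(A+vv^t)^{-1} = A^{-1} - A^{-1}v\,(1+v^t A^{-1}v)^{-1}\,v^t A^{-1}$ applies verbatim and yields the first displayed formula. For $c\in(1,\infty)$ the identical argument with $A=\frac{1}{c}\Sigma_n^{\text{sb}}$ (still positive definite) and $v$ replaced by $w$ produces the second formula.

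I do not expect a genuine obstacle here: the only point that needs attention is checking that the Woodbury identity is legitimately applicable, i.e. that both the "$A$"-block and the scalar capacitance factor are invertible, and both facts are immediate from the positive definiteness of the subcritical matrix established in Corollary \ref{Kor: Det. Kov.matrix subkrit. Reg.}. The remaining work is purely the bookkeeping of matching $\Sigma_n^{\text{cr}}$ to the correct rank-$1$ perturbation of a scaled copy of $\Sigma_n^{\text{sb}}$ in each of the two cases $c\le 1$ and $c>1$.
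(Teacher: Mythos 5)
Your proposal is correct and follows essentially the same route as the paper, which likewise writes $\Sigma_n^{\text{cr}}$ as a rank-one perturbation $vv^t+\Sigma_n^{\text{sb}}$ (resp. $ww^t+\tfrac1c\Sigma_n^{\text{sb}}$) and invokes the Woodbury identity. Your justification that the scalar capacitance is invertible via positive definiteness of $(\Sigma_n^{\text{sb}})^{-1}$ is in fact cleaner than the paper's remark on this point.
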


Note that the expressions $(1+v^t (\Sigma_n^{\text{sb}})^{-1} v)^{-1}$ and $(1+w^t (1/c\Sigma_n^{\text{sb}})^{-1} w)^{-1}$ exist since the entries of $(\Sigma_n^{\text{sb}})^{-1}$ multiplied with $1/(a_ia_j)$ are $\neq -1$ according to Proposition \ref{Satz: Inv. subkrit. Reg.}. 

\begin{thm} \label{Satz: Det. krit. Reg.}
Let $\Sigma_n$ be as in (\ref{Eq: Kov.matrix Laengen Potenz Funktional}) in the critical regime with $n \geq 2$. For $c \in (0,\,1 ]$ we have
\begin{align*}
     \emph{det}(\Sigma_n) =D\biggl(\frac{V(W)d\kappa_d}{2}\biggr)^n
\end{align*}
and for $c \in (1,\,\infty )$ there is (within the brackets) an additional $c$ in the denominator. Here 
\begin{align*}
     D&= \biggl(1+\sum_{j=1}^n \frac{cd\kappa_d}{a_j}\biggl(  \frac{4x_j \prod_{k \in  [n] \setminus \{j\}} (x_k+x_j)^2}{a_j\prod_{k \in  [n] \setminus \{j\}} (x_k-x_j)^2}\\
     &+ \sum_{i \in [n] \setminus \{j\}}  -\frac{ 8x_ix_j(x_i+x_j)\prod_{ l \in [n] \setminus \{i, \, j\},\, k \in \{i, \, j\}} (x_k+x_l) }{a_i (x_j-x_i)^2\prod_{l \in  [n]\setminus \{i, \, j\}  ,\, k \in \{i, \, j\}} (x_k-x_l)}\biggr)\biggr) \frac{\prod_{1 \leq i < j \leq n} (x_i -x_j)^2}{ \prod_{ 1 \leq i, \,j \leq n }( x_i +x_j ) }.
\end{align*}
\end{thm}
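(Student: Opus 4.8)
The plan is to exploit the rank-one structure of the supercritical summand via the matrix determinant lemma (the determinant analogue of the Woodbury identity already used for Theorem~\ref{Satz: Inverse krit. Regime}, see \cite{Horn}): if $A\in\R^{n\times n}$ is invertible then $\det(A+vv^t)=\det(A)\,(1+v^tA^{-1}v)$. By Theorem~\ref{Satz: Kov.matrix Laengen Potenz Funktional} the critical matrix is $\Sigma_n^{\text{cr}}=\Sigma_n^{\text{sb}}+c\,\Sigma_n^{\text{sp}}$ for $c\in(0,1]$ and $\Sigma_n^{\text{cr}}=\tfrac1c\Sigma_n^{\text{sb}}+\Sigma_n^{\text{sp}}$ for $c\in(1,\infty)$, and, as noted before the statement, $c\,\Sigma_n^{\text{sp}}=vv^t$ with $v=\sqrt{cV(W)}\,d\kappa_d(1/a_1,\dots,1/a_n)^t$ while $\Sigma_n^{\text{sp}}=ww^t$ with $w=v/\sqrt c$. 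Since $\Sigma_n^{\text{sb}}$ is positive definite and hence invertible by Corollary~\ref{Kor: Det. Kov.matrix subkrit. Reg.}, the lemma applies in both cases.

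First I would treat $c\in(0,1]$. Applying the lemma with $A=\Sigma_n^{\text{sb}}$ gives $\det(\Sigma_n^{\text{cr}})=\det(\Sigma_n^{\text{sb}})\,\bigl(1+v^t(\Sigma_n^{\text{sb}})^{-1}v\bigr)$. Next I would insert the explicit determinant from Corollary~\ref{Kor: Det. Kov.matrix subkrit. Reg.}, namely $\det(\Sigma_n^{\text{sb}})=(V(W)d\kappa_d/2)^n\,\prod_{i<j}(x_i-x_j)^2/\prod_{i,j}(x_i+x_j)$, and the entries $\hat\sigma_{ij}$ of $(\Sigma_n^{\text{sb}})^{-1}$ from Proposition~\ref{Satz: Inv. subkrit. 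Reg.} into the quadratic form. Because $v_i=\sqrt{cV(W)}\,d\kappa_d/a_i$, one has $v^t(\Sigma_n^{\text{sb}})^{-1}v=cV(W)d^2\kappa_d^2\sum_{i,j}\hat\sigma_{ij}/(a_ia_j)$, and since every $\hat\sigma_{ij}$ carries the common factor $1/(V(W)d\kappa_d)$, this collapses to $cd\kappa_d\sum_{i,j}\rho_{ij}/(a_ia_j)$, where $\rho_{ij}$ denotes the numerator of $\hat\sigma_{ij}$ in Proposition~\ref{Satz: Inv. subkrit. Reg.}. Splitting the double sum into the diagonal part ($i=j$) and the off-diagonal part ($i\neq j$) and factoring one copy of $1/a_j$ out of the inner sum reproduces verbatim the two summands appearing inside the bracket of $D$. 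Multiplying back by $\det(\Sigma_n^{\text{sb}})$ and recognizing $\prod_{i<j}(x_i-x_j)^2/\prod_{i,j}(x_i+x_j)$ as the factor sitting outside that bracket yields $\det(\Sigma_n^{\text{cr}})=(V(W)d\kappa_d/2)^n\,D$.

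For $c\in(1,\infty)$ the matrix is $\tfrac1c\Sigma_n^{\text{sb}}+ww^t$, so the lemma (applied to $A=\tfrac1c\Sigma_n^{\text{sb}}$) gives $\det(\Sigma_n^{\text{cr}})=c^{-n}\det(\Sigma_n^{\text{sb}})\,(1+c\,w^t(\Sigma_n^{\text{sb}})^{-1}w)$; since $w=v/\sqrt c$ we have $w^t(\Sigma_n^{\text{sb}})^{-1}w=\tfrac1c\,v^t(\Sigma_n^{\text{sb}})^{-1}v$, hence $c\,w^t(\Sigma_n^{\text{sb}})^{-1}w=v^t(\Sigma_n^{\text{sb}})^{-1}v$, so the bracket $D$ is literally the same as before and only the prefactor acquires the extra $c^{-n}$, i.e.\ $\det(\Sigma_n^{\text{cr}})=(V(W)d\kappa_d/2c)^n\,D$. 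The only genuinely delicate point — and it is purely computational, not conceptual — is the bookkeeping of the scalar constants $V(W)$, $d\kappa_d$, the powers of $2$, and the factor $1/(V(W)d\kappa_d)$ hidden in the inverse formula of Proposition~\ref{Satz: Inv. subkrit. Reg.}, so that the messy double sum $v^t(\Sigma_n^{\text{sb}})^{-1}v$ matches the printed expression for $D$ term by term.
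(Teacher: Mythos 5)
Your proposal is correct and follows essentially the same route as the paper's own proof: the matrix determinant lemma applied to the rank-one update $\Sigma_n^{\text{cr}}=\Sigma_n^{\text{sb}}+vv^t$ (resp.\ $\tfrac1c\Sigma_n^{\text{sb}}+ww^t$), followed by insertion of the explicit determinant from Corollary~\ref{Kor: Det. Kov.matrix subkrit. Reg.} and the inverse entries from Proposition~\ref{Satz: Inv. subkrit. Reg.} into the quadratic form $v^t(\Sigma_n^{\text{sb}})^{-1}v$. Your handling of the case $c\in(1,\infty)$ via $w=v/\sqrt c$, producing the extra factor $c^{-n}$ while leaving $D$ unchanged, also matches the paper.
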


\begin{proof}
With the notations for $v$ and $w$ introduced before Theorem \ref{Satz: Inverse krit. Regime} it follows from the matrix determinant lemma
(also called Cauchy's formula, see \cite[0.8.5.11.]{Horn}) using the inverse of the already known concentration matrix $\Sigma_n^{\text{sb}}$ for $c \in (0,\,1 ]$ that
\begin{align*}
     \text{det}(\Sigma_n^{\text{cr}}) &=\text{det}(\Sigma_n^{\text{sb}} + c\Sigma_n^{\text{sp}}  ) =  (1+ v^t (\Sigma_n^{\text{sb}})^{-1}v)\text{det}(\Sigma_n^{\text{sb}}) 
\end{align*}
 and for $c \in (1,\, \infty)$ that
\begin{align*}
     \text{det}(\Sigma_n^{\text{cr}}) &=\text{det}(1/c\Sigma_n^{\text{sb}} + \Sigma_n^{\text{sp}}  )
     =  (1+ w^t (1/c\Sigma_n^{\text{sb}})^{-1}w)(1/c)^n\text{det}(\Sigma_n^{\text{sb}}).
\end{align*}
Next we insert the determinant in the subcritical regime from Corollary \ref{Kor: Det. Kov.matrix subkrit. Reg.} and compute $v^t (\Sigma_n^{\text{sb}})^{-1}v$ in the case $c \in (0,\,1 ]$ and $w^t (1/c\Sigma_n^{\text{sb}})^{-1}w$ in the case $c \in (1,\,\infty )$. 
In the first case Proposition \ref{Satz: Inv. subkrit. Reg.} implies that the entry in place $j$ of the 
$1 \times n$-matrix $v^t (\Sigma_n^{\text{sb}})^{-1}$ is
\begin{align*}
  &\left( \frac{\sqrt{c \cdot V(W)}d\kappa_d4x_j \prod_{k \in  [n] \setminus \{j\}} (x_k+x_j)^2}{a_j V(W)d\kappa_d\prod_{k \in  [n] \setminus \{j\}} (x_k-x_j)^2}\right)\\
  &+ \sum_{i \in [n] \setminus \{j\}} \left( -\frac{ \sqrt{c \cdot V(W)}d\kappa_d8x_ix_j(x_i+x_j)\prod_{ l \in [n] \setminus \{i, \, j\},\, k \in \{i, \, j\}} (x_k+x_l) }{a_i V(W)d\kappa_d(x_j-x_i)^2\prod_{l \in  [n]\setminus \{i, \, j\}  ,\, k \in \{i, \, j\}} (x_k-x_l)}\right)  .
\end{align*}
A multiplication of this row vector with $v$ from the right yields
\begin{align*}
  &\sum_{j=1}^n \frac{cd\kappa_d}{a_j}\biggl(  \frac{4x_j \prod_{k \in  [n] \setminus \{j\}} (x_k+x_j)^2}{a_j\prod_{k \in  [n] \setminus \{j\}} (x_k-x_j)^2}+ \sum_{i \in [n] \setminus \{j\}}  -\frac{ 8x_ix_j(x_i+x_j)\prod_{ l \in [n] \setminus \{i, \, j\},\, k \in \{i, \, j\}} (x_k+x_l) }{a_i (x_j-x_i)^2\prod_{l \in  [n]\setminus \{i, \, j\}  ,\, k \in \{i, \, j\}} (x_k-x_l)}\biggr).
\end{align*}
For $c \in (1,\, \infty)$ an analogue computation concludes the proof.
\end{proof}

As a next goal we are discussing questions related to eigenvalues of $\Sigma_n$ in the critical regime. 
\begin{exa} \label{Bsp: EWe krit. Reg.}
Consider for $n=2$ and $c \in (0,\, 1]$ the covariance matrix 
\begin{align*}
\Sigma_2=V(W)\left( \begin{array}{cccccc}
\frac{d\kappa_d(a_1^2+ 2cd\kappa_d 2x_1)}{4x_1a_1^2} &\frac{d\kappa_d(a_1a_2+ 2cd\kappa_d (x_1+x_2))}{2(x_1+x_2)a_1a_2}\\
 \frac{d\kappa_d(a_1a_2+ 2cd\kappa_d (x_1+x_2))}{2(x_1+x_2)a_1a_2} & \frac{d\kappa_d(a_2^2+ 2cd\kappa_d 2x_2)}{4x_2a_2^2}\\
\end{array}\right).
\end{align*}
The case where $d=2$, $V(W)=1$ and (non-negative increasing powers) $\tau_1=0, \, \tau_2 = 1$ is of special interest. 
In particular, we consider  $c=1$. Then a calculation of the characteristic polynomial with eigenvalues $\lambda_1$, $\lambda_2$ of $\Sigma_2$ leads to
\begin{align*}
\chi(\Sigma_2) &= \lambda^2 - \lambda \cdot\text{tr}(\Sigma_2) +\text{det}(\Sigma_2)
=\lambda^2 - \lambda\left( \frac{13 \pi^2}{9}+\frac{3\pi}{4}\right) + \frac{\pi^3}{36}+ \frac{\pi^2}{72} \in \R[\lambda].
\end{align*}
We obtain
$\lambda_{1,2} 
= \frac{\pi}{72}\left(52\pi+27\mp\sqrt{52^2\pi^2+2664\pi+657}\right).$
\end{exa}

The characteristic polynomial can be determined for the critical regime as in the other possible regimes. Lemma \ref{Satz: char. Poly. OmegaRS subkrit. Reg.} and Theorem \ref{Satz: Det. krit. Reg.} yield the following: 

\begin{cor} \label{Satz: char. Poly. Sigma krit. Reg.}
Let $\Sigma_n = (\sigma_{ij})$ be as in (\ref{Eq: Kov.matrix Laengen Potenz Funktional}) in the critical regime for $n \geq 2$ with characteristic polynomial
$\chi(\Sigma_n) = (-1)^n \lambda^n+ a_{n}^{(n-1)}\lambda^{n-1}+ \ldots  + a_n^{(1)} \lambda+ a_n^{(0)} \in \mathbb{R}[\lambda]$. 
For $c \in (0,\,1]$ we have 
\begin{align*}
  a^{(k)}_{n} &= (-1)^k  D_k  \biggl(\frac{V(W)d\kappa_d}{2}\biggr)^{n-k} \text{ for } \, k \in [n-1] .  
\end{align*}
For $c \in (1,\,\infty )$ there is an additional $c$ in the denominator of $V(W)d\kappa_d/2$. Here
\begin{align*}
   D_k= &\sum \limits_{1 \leq i_1< \ldots< i_{n-k} \leq n }\biggl(1+\sum_{j \in\{i_1,\, \ldots, \,i_{n-k}\}} \frac{cd\kappa_d}{a_j}\biggl(  \frac{4x_j \prod_{k \in  \{i_1,\, \ldots, \,i_{n-k}\} \setminus \{j\}} (x_k+x_j)^2}{a_j\prod_{k \in \{i_1,\, \ldots, \,i_{n-k}\} \setminus \{j\}} (x_k-x_j)^2}\\
     &+ \sum_{i \in \{i_1,\, \ldots, \,i_{n-k}\} \setminus \{j\}}  -\frac{ 8x_ix_j(x_i+x_j)\prod_{ l \in \{i_1,\, \ldots, \,i_{n-k}\} \setminus \{i, \, j\},\, k \in \{i, \, j\}} (x_k+x_l) }{a_i (x_j-x_i)^2\prod_{l \in  \{i_1,\, \ldots, \,i_{n-k}\}\setminus \{i, \, j\}  ,\, k \in \{i, \, j\}} (x_k-x_l)}\biggr)\biggr)  \\
     & \frac{\prod_{i < j \in \{i_1,\, \ldots, \,i_{n-k}\}} (x_i -x_j)^2}{ \prod_{  i, \,j \in \{i_1,\, \ldots, \,i_{n-k}\} }( x_i +x_j ) }  
\end{align*}
for $k \in [n-1]$. 
\end{cor}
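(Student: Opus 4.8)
The plan is to mimic exactly the derivation of the subcritical characteristic polynomial (Corollary \ref{Satz: char. Poly. Sigma subkrit. Reg. }), replacing the determinant formula of a Cauchy minor (Lemma \ref{Satz: Det. mod. Matrix subkrit. Reg.}) by the determinant formula for the critical matrix established in Theorem \ref{Satz: Det. krit. Reg.}. First I would record that, for any index set $\{i_1 < \ldots < i_{m}\} \subseteq [n]$, the principal submatrix of $\Sigma_n^{\text{cr}}$ on those rows and columns is again a covariance matrix of the critical type associated to the subfamily of powers $\tau_{i_1}, \ldots, \tau_{i_m}$ (its $a$'s and $x$'s are just the restrictions), so Theorem \ref{Satz: Det. krit. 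Reg.} applies verbatim to that principal minor. In particular its determinant equals the stated bracketed factor $\bigl(1 + \sum_{j}\tfrac{cd\kappa_d}{a_j}(\cdots)\bigr)$, with all products and sums ranging over $\{i_1,\ldots,i_m\}$ instead of $[n]$, times $\bigl(\tfrac{V(W)d\kappa_d}{2}\bigr)^{m}$ (and an extra $c^{-1}$ per factor when $c>1$), times the Cauchy ratio $\prod_{i<j}(x_i-x_j)^2/\prod_{i,j}(x_i+x_j)$ over the same index set.

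Next I would invoke Lemma \ref{Satz: char. Poly. OmegaRS subkrit. Reg.} with $R=S=[n]$ and $z_{r_is_j}$ the $(i,j)$-entry of $\Sigma_n^{\text{cr}}$ (this is legitimate since that lemma is purely an identity about $\chi$ of an arbitrary square matrix expressed through its principal minors). It gives $a_n^{(k)} = (-1)^{k}\sum_{1\le i_1<\ldots<i_{n-k}\le n} \det\bigl((\sigma_{i_u i_v})_{u,v\in[n-k]}\bigr)$ for $k\in[n-1]$. Substituting the determinant of each $(n-k)\times(n-k)$ principal minor from the first step — which is precisely $D_k^{(i_1,\ldots,i_{n-k})}\bigl(\tfrac{V(W)d\kappa_d}{2}\bigr)^{n-k}$ in the obvious notation — and pulling the common constant $\bigl(\tfrac{V(W)d\kappa_d}{2}\bigr)^{n-k}$ out of the sum over index tuples yields exactly the claimed formula, with $D_k$ being the asserted sum over $\{i_1,\ldots,i_{n-k}\}$. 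The case $c\in(1,\infty)$ is identical except that each minor carries the additional $c$ in the denominator, as already flagged in Theorem \ref{Satz: Det. krit. Reg.}, so the single scalar $\bigl(\tfrac{V(W)d\kappa_d}{2c}\bigr)^{n-k}$ replaces $\bigl(\tfrac{V(W)d\kappa_d}{2}\bigr)^{n-k}$.

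The only genuine point to check carefully — and the one I expect to be the main (though still minor) obstacle — is the bookkeeping claim that every principal submatrix of $\Sigma_n^{\text{cr}}$ is again a critical covariance matrix of the same shape, so that Theorem \ref{Satz: Det. krit. Reg.} is applicable to it with the products restricted to the selected indices; once this is granted the rest is pure substitution. Everything else is the same two-step template (minor-expansion of $\chi$ via Lemma \ref{Satz: char. Poly. OmegaRS subkrit. Reg.}, then plug in the known determinant) that was already used in the subcritical regime, so no new ideas are required. I would therefore keep the write-up short: state the restriction remark, cite Lemma \ref{Satz: char. Poly. OmegaRS subkrit. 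Reg.} and Theorem \ref{Satz: Det. krit. Reg.}, and then simply display the resulting identity for $a_n^{(k)}$.
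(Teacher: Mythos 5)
Your proposal is correct and follows essentially the same route as the paper, which derives this corollary by combining the principal-minor expansion of the characteristic polynomial (Lemma \ref{Satz: char. Poly. OmegaRS subkrit. Reg.}) with the determinant formula of Theorem \ref{Satz: Det. krit. Reg.} applied to each principal submatrix. Your explicit remark that every principal submatrix of $\Sigma_n^{\text{cr}}$ is again a critical covariance matrix for the restricted family of powers is the right (and only) point needing care, and it holds since the entries depend only on the pairs $(x_i,x_j)$ and $(a_i,a_j)$.
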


As in the subcritical regime we can not determine eigenvalues explicitly. 
Upper and lower bounds for them are given below. See also, e.g., \cite{Ding} for related results using other methods.

\begin{thm} \label{Satz: Schranken EWe krit. Reg.}
Let $\Sigma_n^{\text{cr}} $ be as in (\ref{Eq: Kov.matrix Laengen Potenz Funktional}) in the critical regime for $n \geq 2$ with eigenvalues $\lambda_1^{\text{cr}} \leq \ldots \leq \lambda_n^{\text{cr}} $ and let $\bar{S}_n$ and $\b{S}_n$ be as in Theorem \ref{Satz: Schranken EWe Sigma subkrit. Reg.}. We have
\[
\textit{\b{U}}_n \leq \lambda_1^{\text{cr}} \leq  \ldots \leq \lambda_n^{\text{cr}} \leq \bar{U}_n
\text{ where }
\]
\begin{enumerate}
\item $
\textit{\b{U}}_n = \textit{\b{S}}_n  \, \text{ and } \,\bar{U}_n = \bar{S}_n  + \sum_{i=1}^n \frac{cV(W)d^2\kappa_d^2}{ a_i^2 } \,\, \text{ for } c \in (0,\, 1],$
\item $\b{U}_n = \frac{\textit{\b{S}}_n }{c}\, \text{ and }\, \bar{U}_n =\frac{\bar{S}_n }{c}+\sum_{i=1}^n \frac{V(W)d^2\kappa_d^2}{ a_i^2 }\,\, \text{ for }c \in (1,\, \infty).$
\end{enumerate}
\end{thm}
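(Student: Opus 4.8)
The plan is to read the statement off the additive decomposition of $\Sigma_n^{\text{cr}}$ supplied by Theorem~\ref{Satz: Kov.matrix Laengen Potenz Funktional}, combined with the eigenvalue interlacing inequalities of Lemma~\ref{Lemma: Courant-Fisher}(i). Recall that for $c\in(0,1]$ one has $\Sigma_n^{\text{cr}}=\Sigma_n^{\text{sb}}+c\,\Sigma_n^{\text{sp}}$, while for $c\in(1,\infty)$ one has $\Sigma_n^{\text{cr}}=\tfrac1c\Sigma_n^{\text{sb}}+\Sigma_n^{\text{sp}}$. In either case $\Sigma_n^{\text{cr}}$ is a sum of two symmetric positive semidefinite matrices, one summand being a positive multiple of $\Sigma_n^{\text{sp}}$ whose full spectrum is known from Theorem~\ref{Satz: EW.e, ER.e superkrit. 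Reg.} (the eigenvalue $0$ with multiplicity $n-1$ and the simple eigenvalue $V(W)d^2\kappa_d^2\sum_{i=1}^n 1/a_i^2$), the other being a positive multiple of $\Sigma_n^{\text{sb}}$, whose extreme eigenvalues are trapped between $\underline{S}_n$ and $\bar{S}_n$ by Theorem~\ref{Satz: Schranken EWe Sigma subkrit. Reg.}.

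First I would treat $c\in(0,1]$. Set $A=\Sigma_n^{\text{sb}}$ and $B=c\,\Sigma_n^{\text{sp}}$; multiplying a symmetric matrix by $c>0$ multiplies every eigenvalue by $c$ and preserves their order, so $B$ has smallest eigenvalue $\lambda_1^B=0$ and largest eigenvalue $\lambda_n^B=c\,V(W)d^2\kappa_d^2\sum_{i=1}^n 1/a_i^2$. Lemma~\ref{Lemma: Courant-Fisher}(i) then gives, for every $j\in[n]$,
\[
\lambda_j^{\text{sb}}\;\le\;\lambda_j^{\text{cr}}\;\le\;\lambda_j^{\text{sb}}+c\,V(W)d^2\kappa_d^2\sum_{i=1}^n\frac1{a_i^2}.
\]
Taking $j=1$ together with the lower bound $\underline{S}_n\le\lambda_1^{\text{sb}}$ from Theorem~\ref{Satz: Schranken EWe Sigma subkrit. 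Reg.} yields $\lambda_1^{\text{cr}}\ge\underline{S}_n=\underline{U}_n$, and taking $j=n$ together with $\lambda_n^{\text{sb}}\le\bar{S}_n$ yields $\lambda_n^{\text{cr}}\le\bar{S}_n+c\,V(W)d^2\kappa_d^2\sum_{i=1}^n 1/a_i^2=\bar{U}_n$; the remaining inequalities $\lambda_1^{\text{cr}}\le\cdots\le\lambda_n^{\text{cr}}$ are just the ordering of the eigenvalues. This is case (i).

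For $c\in(1,\infty)$ I would run the identical argument with $A=\tfrac1c\Sigma_n^{\text{sb}}$ and $B=\Sigma_n^{\text{sp}}$: rescaling the bounds of Theorem~\ref{Satz: Schranken EWe Sigma subkrit. Reg.} by $1/c$ places the eigenvalues of $A$ in $[\underline{S}_n/c,\,\bar{S}_n/c]$, while $B$ again has extreme eigenvalues $0$ and $V(W)d^2\kappa_d^2\sum_{i=1}^n 1/a_i^2$. Lemma~\ref{Lemma: Courant-Fisher}(i) then produces $\lambda_1^{\text{cr}}\ge\underline{S}_n/c=\underline{U}_n$ and $\lambda_n^{\text{cr}}\le\bar{S}_n/c+V(W)d^2\kappa_d^2\sum_{i=1}^n 1/a_i^2=\bar{U}_n$, which is case (ii). I do not anticipate a genuine obstacle here: the statement is essentially a corollary of Weyl-type interlacing for sums of symmetric matrices, and the only point requiring (elementary) care is the correct transfer of the subcritical eigenvalue bounds under multiplication by the positive scalar $c$, respectively $1/c$.
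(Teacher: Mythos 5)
Your proposal is correct and follows essentially the same route as the paper: decompose $\Sigma_n^{\text{cr}}$ via Theorem \ref{Satz: Kov.matrix Laengen Potenz Funktional}, use the known spectrum of $\Sigma_n^{\text{sp}}$ from Theorem \ref{Satz: EW.e, ER.e superkrit. Reg.} and the subcritical bounds from Theorem \ref{Satz: Schranken EWe Sigma subkrit. Reg.}, then apply the Weyl-type inequality of Lemma \ref{Lemma: Courant-Fisher}(i), taking care of the scalar $c$ (resp.\ $1/c$). This matches the paper's argument in both cases.
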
 
\begin{proof}
Consider $ c \in (0,\,1]$. Let $\lambda^{t}_1 \leq \lambda_2 ^{t}\leq\ldots \leq \lambda_n^{t} $ for $t\in \{\text{sb}, \,\text{sp}\}$ be all eigenvalues of the corresponding matrices $\Sigma_n^{\text{sb}}$ and $\Sigma_n^{\text{sp}}$ of the matrix $\Sigma_n^{\text{cr}}$. Due to Theorem \ref{Satz: EW.e, ER.e superkrit. Reg.} the eigenvalues of $\Sigma_n^{\text{sp}}$ are $\lambda_1^{\text{sp}} = \lambda_2^{\text{sp}} = \ldots = \lambda_{n-1} ^{\text{sp}}=0\,\text{ and } \,\lambda_n ^{\text{sp}}= V(W)d^2\kappa_d^2(\sum_{i=1}^n 1/ a_i^2 ).$ Theorem \ref{Satz: Schranken EWe Sigma subkrit. Reg.} provides an upper bound $\bar{S}_n$ and a lower bound $\emph{\b{S}}_n$ for the eigenvalues $\lambda_1^{\text{sb}}\leq \ldots \leq\lambda_n^{\text{sb}}$ of $\Sigma_n^{\text{sb}}$. Using Lemma \ref{Lemma: Courant-Fisher}(i), one can state bounds for the eigenvalues of $\Sigma_n^{\text{cr}}$. Note that the multiplicative constant $c$ occurring in the sum decomposition
of $\Sigma_n^{\text{cr}}$
must be taken into account. Thus,
\begin{align*} 
\emph{\b{S}}_n+c\cdot 0 \leq \lambda_1^{\text{sb}} + c \lambda_1^{\text{sp}} \overset{\ref{Lemma: Courant-Fisher}(i)}{\leq} \lambda_1^{\text{cr}} \leq \ldots \leq \lambda_n^{\text{cr}} \overset{\ref{Lemma: Courant-Fisher}(i)}{\leq} \lambda_n^{\text{sb}} + c \lambda_n^{\text{sp}} \leq \bar{S}_n + c\sum_{i=1}^n \frac{V(W)d^2 \kappa_d^2}{a_i^2}.
\end{align*}
The proof of the case $c \in (1,\, \infty)$ is done in an analogue way.
\end{proof} 

\begin{rem} \label{Bem: Schranken EWe krit. Reg.}
It is of interest to improve the bounds from the previous theorem. A possible approach for this is to use eigenvalues $\lambda_i^{\text{sb}}$ of $\Sigma_n^{\text{sb}}$ themselves.
Then Lemma \ref{Lemma: Courant-Fisher}(i) yields:
\begin{enumerate}   
 \item If $c \in (0,\, 1]$, then \,$ \lambda_i^{\text{sb}}  \leq \lambda_i^{\text{cr}} \leq \lambda^{\text{sb}}_i+ c V(W)d^2\kappa_d^2 (\sum_{i=1}^n \frac{1}{a_i^2 } )$ for $i \in [n]$.
 \item If $ c \in (1,\, \infty)$, then \,$ \frac{1}{c}\lambda^{\text{sb}}_i\leq \lambda_i^{\text{cr}} \leq \frac{1}{c}\lambda^{\text{sb}}_i+V(W)d^2\kappa_d^2(\sum_{i=1}^n \frac{1}{ a_i^2 } )$ for $i \in [n]$.
\end{enumerate}
\end{rem} 

Note that this remark yields at the moment an improvement only for $n=2$, since the eigenvalues $\lambda_{n}^{\text{sb}}$ for $n>2$ have not been explicitly determined yet.

\begin{exa} \label{Bsp: Schranken EWe krit. Reg.}   
Consider $\Sigma_2^{\text{cr}}$ from Example \ref{Bsp: EWe krit. Reg.} with $c =1, \, d=2, V(W)=1$ and (non-negative increasing  powers) $\tau_1 =0$ and $\tau_2 =1$. The bounds from Theorem \ref{Satz: Schranken EWe krit. Reg.} lead to
\[\emph{\b{U}}_2= \frac{\pi}{24}(9-\sqrt{73})  \leq \lambda_1^{\text{cr}} \leq \lambda_2^{\text{cr}} \leq \frac{\pi}{24}(9+\sqrt{73})  + \frac{13\pi^2}{9} =\bar{U}_2.\]
Observe that Remark \ref{Bem: Schranken EWe krit. Reg.} is applicable. 
By Example \ref{Bsp: EWe krit. Reg.} the eigenvalues of $\Sigma^{\text{sb}}_2$ are
$\lambda^{\text{sb}}_1 =\frac{\pi}{24}(9-\sqrt{73}) \, \text{ and } \,
\lambda^{\text{sb}}_2  =\frac{\pi}{24}(9+\sqrt{73}) .$
According to Theorem \ref{Satz: EW.e, ER.e superkrit. Reg.} all eigenvalues of $\Sigma_2^{\text{sp}}$ are given as $\lambda^{\text{sp}}_1 =0  \,\text{ and } \, \lambda^{\text{sp}}_2 =13\pi^2/9.$ Hence,
\begin{align*}
&
\frac{\pi}{24}(9-\sqrt{73})  \leq \lambda_1^{\text{cr}} \leq \frac{\pi}{24}(9-\sqrt{73})  + \frac{13\pi^2}{9} 
\\
& \frac{\pi}{24}(9+\sqrt{73})  \leq \lambda_2^{\text{cr}} \leq \frac{\pi}{24}(9+\sqrt{73})  + \frac{13\pi^2}{9}.
\end{align*}
Note that according to these bounds the eigenvalues 
are potentially lying in an interval of length
$\lambda_2^{\text{sp}} =\frac{13\pi^2}{9}< 2\cdot 10^2$. 
However,  we know that
\[
\lambda_{1, \, 2}^{\text{cr}}= \frac{\pi}{72}\left(52\pi+27\mp\sqrt{52^2\pi^2+2664\pi+657}\right)
\]
and these numbers are close to the end points of that interval, since they
have only a distance $\leq 10^{-3}$ to $\bar{U}_2$ or $ \emph{\b{U}}_2$. 
This examples motivates to study improvements for $\bar{U}_n$ and $\emph{\b{U}}_n$ to obtain better bounds.
\end{exa} 

We state the following conjecture regarding eigenvalues of the considered matrices.

\begin{conj} \label{Verm: EWe, Eigenraeume, Diag.matrix krit. Reg.}
Let $\Sigma_n$ be as in (\ref{Eq: Kov.matrix Laengen Potenz Funktional}) in the critical regime for $n \geq 2$ with eigenvalues $\lambda_1 \leq \ldots \leq \lambda_n$. Then $\lambda_i \neq \lambda_j $ for all  $i \neq j \in [n]$.
\end{conj}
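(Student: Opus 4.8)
The plan is to realize $\Sigma_n^{\text{cr}}$ as a rank-one perturbation of a Cauchy matrix. Recall from the discussion preceding Theorem~\ref{Satz: Inverse krit. Regime} that, with $u:=(1/a_1,\dots,1/a_n)^t$ and $\Omega_n=(1/(x_i+x_j))$ the Cauchy matrix of (\ref{Eq: Kov.matrix Omega subkrit. Reg}), Theorem~\ref{Satz: Kov.matrix Laengen Potenz Funktional} together with $\Sigma_n^{\text{sb}}=(V(W)d\kappa_d/2)\,\Omega_n$ and $\Sigma_n^{\text{sp}}=V(W)d^2\kappa_d^2\,uu^t$ gives, in both subcases $c\le 1$ and $c>1$,
\[
\Sigma_n^{\text{cr}}=\rho\,(\Omega_n+\gamma\,uu^t),\qquad \rho>0,\qquad \gamma:=2cd\kappa_d>0,
\]
and since a positive rescaling does not change which eigenvalues coincide, it suffices to show that $\Omega_n+\gamma uu^t$ has $n$ pairwise distinct eigenvalues.

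As $0<x_1<\dots<x_n$, every minor of the Cauchy matrix $\Omega_n$ is positive (the Cauchy determinant formula, cf.\ Lemma~\ref{Satz: Det. mod. Matrix subkrit. Reg.}), so $\Omega_n$ is strictly totally positive and, by the classical spectral theory of such matrices, has $n$ distinct positive eigenvalues $\lambda_1<\dots<\lambda_n$ with orthonormal eigenvectors $q_1,\dots,q_n$ -- in particular this already proves Conjecture~\ref{Verm: verschied. EWe subkrit. Reg.}. Consequently, provided that every coefficient $u^tq_k$ is nonzero, the eigenvalues $\mu$ of $\Omega_n+\gamma uu^t$ are precisely the roots of the secular equation
\[
1+\gamma\sum_{k=1}^{n}\frac{(u^tq_k)^2}{\lambda_k-\mu}=0
\]
and none of them equals a $\lambda_k$; since $\gamma>0$, the left-hand side is continuous and strictly increasing and takes both negative and positive values on each of the $n$ intervals $(\lambda_1,\lambda_2),\dots,(\lambda_{n-1},\lambda_n),(\lambda_n,\infty)$, so it has exactly one root in each, and these $n$ distinct numbers are the eigenvalues. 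Thus the whole statement reduces to proving that $u=(1/a_1,\dots,1/a_n)^t$ is orthogonal to no eigenvector of $\Omega_n$, i.e.\ that $u$ is a cyclic vector for $\Omega_n$.

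This cyclicity is the step I expect to be the main obstacle, and my proposed route is a bordering trick. Since $a_i=\tau_i+d=x_i+d/2$, the vector $u$ is the off-diagonal part of the row (and column) indexed by the extra node $d/2$ in the $(n+1)\times(n+1)$ Cauchy matrix $\Omega'=(1/(y_i+y_j))$ on the node set $\{x_1,\dots,x_n,d/2\}$; indeed $\Omega_n$ is the leading $n\times n$ block of $\Omega'$, so if some eigenvector $q$ of $\Omega_n$ had $u^tq=0$, appending a zero to $q$ would produce a nonzero eigenvector of $\Omega'$ whose coordinate at the node $d/2$ vanishes. Now $\Omega'$, once its nodes are sorted into increasing order, is again strictly totally positive, and -- a genuine consequence of the Gantmacher--Krein oscillation theory -- the first and the last coordinate of every eigenvector of a strictly totally positive matrix are nonzero. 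We therefore reach a contradiction whenever $d/2$ is the smallest or the largest of the $n+1$ nodes (equivalently, whenever $\tau_1>0$ or $\tau_n<0$), because then the vanishing coordinate occupies a first or last position after the sorting; the case $\tau_{j_0}=0$ with $j_0\in\{1,n\}$ is handled in the same spirit, since there $u$ is literally the $j_0$-th column of $\Omega_n$ and $u^tq=\mu q_{j_0}$.

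What remains -- and is the genuinely hard part -- is the case where $d/2$ lies strictly between two of the nodes, i.e.\ where the $\tau_i$ change sign (or $\tau_{j_0}=0$ for an interior $j_0$); here one must rule out an eigenvector of $\Omega'$, respectively $\Omega_n$, with a vanishing \emph{interior} coordinate, which is \emph{false} for general strictly totally positive matrices, so the specific Cauchy structure has to be exploited. I would recast this as a polynomial problem: for an eigenvector $q$ of $\Omega_n$ with eigenvalue $\mu$, writing $R(z)=\sum_i q_i/(z+x_i)=P(z)/\prod_i(z+x_i)$ with $\deg P\le n-1$ and setting $\pi(z):=\prod_i(z-x_i)$, the eigenrelations translate into $P(x_i)\,\pi'(x_i)=-\mu\,P(-x_i)\,\pi(-x_i)$ for $i\in[n]$, while $u^tq=0$ amounts to $P(d/2)=0$; the task is then to show that no nonzero polynomial $P$ of degree $\le n-1$ can meet all of these conditions at once. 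Proving that non-vanishing, uniformly over all admissible node configurations and all eigenvalues $\mu$, is the crux -- which is why only a conjecture is recorded here; granting it, the secular-equation computation of the second paragraph completes the argument.
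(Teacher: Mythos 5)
The statement you are trying to prove is recorded in the paper only as Conjecture~\ref{Verm: EWe, Eigenraeume, Diag.matrix krit. Reg.}; the authors offer no proof, so there is nothing to compare your argument against. Judged on its own terms, the correct parts of your proposal go genuinely beyond the paper. The identity $\Sigma_n^{\text{cr}}=\rho(\Omega_n+\gamma uu^t)$ with $\gamma=2cd\kappa_d$ checks out in both subcases, and your observation that a Cauchy matrix with distinct positive nodes is strictly totally positive (all its minors are Cauchy determinants, positive by Lemma~\ref{Satz: Det. mod. Matrix subkrit. Reg.}) does settle Conjecture~\ref{Verm: verschied. EWe subkrit. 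Reg.} via the Gantmacher--Krein theorem on simplicity of the spectrum of such matrices --- that alone is a contribution the paper does not make. The secular-equation analysis of the rank-one update is standard and correct: \emph{conditional} on $u^tq_k\neq 0$ for every eigenvector $q_k$ of $\Omega_n$, the $n$ roots are simple and distinct from the $\lambda_k$, so the conjecture reduces, as you say, to $u$ being a cyclic vector for $\Omega_n$. The bordering trick is also sound ($(q,0)^t$ is an eigenvector of the $(n+1)\times(n+1)$ Cauchy matrix $\Omega'$ whenever $u^tq=0$), and the endpoint non-vanishing of eigenvector coordinates for strictly totally positive matrices is a genuine consequence of $S^+(q)=S^-(q)$ in the oscillation theory; this disposes of the cases $\tau_1>0$, $\tau_n<0$, and $\tau_{j_0}=0$ for $j_0\in\{1,n\}$.

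The gap is the one you name yourself, and it is real: when $d/2$ lies strictly between two of the nodes $x_1<\dots<x_n$ (equivalently, when the powers $\tau_i$ change sign, or $\tau_{j_0}=0$ for an interior $j_0$), the appended zero sits at an \emph{interior} position of the sorted node set, and interior coordinates of eigenvectors of strictly totally positive matrices can vanish in general, so total positivity alone cannot finish the argument. Your polynomial reformulation ($P$ of degree $\le n-1$ satisfying $P(x_i)\pi'(x_i)=-\mu P(-x_i)\pi(-x_i)$ for all $i$ together with $P(d/2)=0$) is a correct translation of the obstruction, but you give no argument ruling such a $P$ out, and without it the secular equation can degenerate (a deflated eigenvalue $\lambda_k$ may coincide with a root of the reduced secular equation), so the conclusion does not follow. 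The proposal is therefore a correct and useful partial result --- it proves the conjecture under the additional hypothesis that all $\tau_i$ are positive or all are negative --- but not a proof of the statement as stated; the status of the conjecture is unchanged in the remaining sign-changing case.
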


Finally, we investigate decompositions of $\Sigma_n$ in the critical regime. They become very complicated in this regime. That's why we focus here on an interesting special case:

\begin{Def}
Let $\tau_i = i-1$ for $i=1,\dots,n$ and $d=2$. The vector of length power functionals $(\tilde{L}_t^{(\tau_1)}, \ldots, \,\tilde{L}_t^{(\tau_n)})$ according to (\ref{Eq: normiertes, zentriertes Laengen Potenz Funktional}) is called \textit{vector of natural increasing powers}.
\end{Def}

For the vector of natural increasing powers the LU decomposition is:

\begin{prop}
\label{Satz: LR-Zerlegung krit. Reg.}
Let $\Sigma_n$ be as in (\ref{Eq: Kov.matrix Laengen Potenz Funktional}) in the critical regime with $n \geq 2$ with respect to the vector of natural increasing powers. The LU decomposition $\Sigma_n = L U$ with $L =  (l_{ij})$ and $ U = (u_{ij})$ is given by 
\begin{align*}
l_{ij} 
&= 
\left\{
\begin{array}{ll}
0& \,  \text{ for } \, i < j, \\
\frac{\prod_{k=1}^{j-1}(i-k)\prod_{k=1}^{j} (j+k)}{\prod_{k=1}^{j-1} (j-k)\prod_{ k = 1}^{j} ( i + k)}& \, \text{ for }\, i>j,\\
\end{array}
\right. 
\\
\text{ and }\,
u_{ij} 
&= 
\left\{
\begin{array}{ll}
\frac{ \pi (1+ 2c\pi )}{j+1}  & \, \text{ for }\, i=1 \text{ and }  c \in (0, \, 1],\\ 
\frac{ \pi (1+2c\pi)}{c(j+1)} & \, \text{ for }\, i=1 \text{ and } c \in  (1, \, \infty),\\ 
0 & \,  \text{ for } \, i > j,\\
\frac{\pi\prod_{k=1}^{i-1}(i-k)(j-k)}{(i+j)\prod_{ k = 1}^{i-1} ( i + k) \prod_{k=1}^{i-1}(j + k)} & \, \text{ for }\, 1<i \leq j \text{ and }c \in (0,\, 1],\\
\frac{\pi\prod_{k=1}^{i-1}(i-k)(j-k)}{c(i+j)\prod_{ k = 1}^{i-1} ( i + k) \prod_{k=1}^{i-1}(j + k)} & \, \text{ for }\, 1<i \leq j \text{ and } c \in (1, \, \infty). \\
\end{array}
\right. 
\end{align*}
\end{prop}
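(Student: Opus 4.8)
The plan is to reduce the claim to the already-established subcritical LU decomposition (Theorem~\ref{Satz: LR-Zerleg. subkrit. Reg.}) by using that, in the critical regime, $\Sigma_n^{\text{cr}}$ is a rank-one correction of a rescaled subcritical matrix. For the vector of natural increasing powers we substitute $d=2$ (so $\kappa_d=\pi$, $d\kappa_d=2\pi$), $V(W)=1$, $x_i=\tau_i+d/2=i$ and $a_i=\tau_i+d=i+1$. By Theorem~\ref{Satz: Kov.matrix Laengen Potenz Funktional}, $\Sigma_n^{\text{cr}}=\Sigma_n^{\text{sb}}+c\,\Sigma_n^{\text{sp}}$ for $c\in(0,1]$ and $\Sigma_n^{\text{cr}}=\tfrac1c\Sigma_n^{\text{sb}}+\Sigma_n^{\text{sp}}$ for $c\in(1,\infty)$, where the corresponding matrices $\Sigma_n^{\text{sb}},\Sigma_n^{\text{sp}}$ are built from the same data $x_i,a_i$. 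Write $\Sigma_n^{\text{sb}}=L^{\text{sb}}U^{\text{sb}}$ for the LU decomposition of Theorem~\ref{Satz: LR-Zerleg. subkrit. Reg.} specialized to $x_i=i$.

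The key observation is that $\Sigma_n^{\text{sp}}=ww^t$ with $w=\sqrt{V(W)}\,d\kappa_d\,(1/a_1,\dots,1/a_n)^t$, while the first column of $L^{\text{sb}}$ has entries $l^{\text{sb}}_{i1}=2x_1/(x_i+x_1)=2/(i+1)$; after substitution $w_i=2\pi/(i+1)$, so $w=\pi\,L^{\text{sb}}e_1$. Hence $(L^{\text{sb}})^{-1}ww^t=\pi\,e_1w^t$ is supported on its first row and is therefore upper triangular. Consequently, for $c\in(0,1]$,
\[
\Sigma_n^{\text{cr}}=L^{\text{sb}}U^{\text{sb}}+c\,ww^t=L^{\text{sb}}\bigl(U^{\text{sb}}+c\pi\,e_1w^t\bigr),
\]
and since multiplying a matrix by a scalar leaves the $L$-factor of its LU decomposition unchanged and only rescales the $U$-factor, for $c\in(1,\infty)$ one likewise obtains $\Sigma_n^{\text{cr}}=L^{\text{sb}}\bigl(\tfrac1cU^{\text{sb}}+\pi\,e_1w^t\bigr)$. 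In both cases the second factor is upper triangular (it differs from a scalar multiple of $U^{\text{sb}}$ only in the first row) with nonzero diagonal ($\Sigma_n^{\text{sb}}$ is regular, so the diagonal of $U^{\text{sb}}$ does not vanish, and the first diagonal entry becomes $\pi(1+2c\pi)/2>0$), and $L^{\text{sb}}$ is unit lower triangular; so this is an LU decomposition, and by regularity of $\Sigma_n^{\text{cr}}$ (Corollary~\ref{Satz: pos. Def., Inv., Det. krit. Reg.}) together with Theorem~\ref{Alg: LR} it is the unique one with $P=I_n$.

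It remains to transcribe $L=L^{\text{sb}}$ and $U=U^{\text{sb}}+c\pi e_1w^t$ (resp.\ $\tfrac1cU^{\text{sb}}+\pi e_1w^t$) into the stated closed forms. For the $L$-entries, plugging $x_i=i$ into the formula of Theorem~\ref{Satz: LR-Zerleg. subkrit. Reg.} and using $2j\prod_{k=1}^{j-1}(j+k)=\prod_{k=1}^{j}(j+k)$ gives exactly the claimed $l_{ij}$ for $i>j$ (and the same expression yields $1$ at $i=j$, consistent with $L$ being normalized). For the $U$-entries with $i>1$, plugging in $x_i=i$ and using $(i+j)\prod_{k=1}^{i-1}(j+k)=\prod_{k=1}^{i}(j+k)$ reproduces the claimed $u_{ij}$ (with the extra factor $\tfrac1c$ when $c>1$), while for $i=1$ one computes directly $u_{1j}=u^{\text{sb}}_{1j}+c\pi w_j=\tfrac{\pi}{j+1}+\tfrac{2c\pi^2}{j+1}=\tfrac{\pi(1+2c\pi)}{j+1}$ when $c\le1$, and $u_{1j}=\tfrac1cu^{\text{sb}}_{1j}+\pi w_j=\tfrac{\pi(1+2c\pi)}{c(j+1)}$ when $c>1$. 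The two product identities are trivial reindexings of $(j+1)(j+2)\cdots(2j)$ and $(j+1)\cdots(j+i)$, so the only substantive point is the rank-one observation that $w$ is proportional to the first column of $L^{\text{sb}}$; beyond that the argument is bookkeeping, which is where the modest effort lies.
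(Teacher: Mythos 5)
Your proof is correct, but it takes a genuinely different route from the paper's own proof of this proposition. The paper verifies the decomposition head-on: it multiplies the claimed $L$ and $U$, collapses the sum $\sum_m l_{im}u_{mj}$ using the telescoping identity (\ref{Eq: Formal Produkt}) established in the subcritical section, and checks that the entries of $\Sigma_n^{\text{cr}}$ emerge. You instead reduce everything to Theorem \ref{Satz: LR-Zerleg. subkrit. Reg.} by noting that $\Sigma_n^{\text{sp}}=ww^t$ with $w=\pi L^{\text{sb}}e_1$ (here $\tau_1=0$, $d=2$ give $w_i=2\pi/(i+1)=\pi l^{\text{sb}}_{i1}$), so $(L^{\text{sb}})^{-1}ww^t=\pi e_1w^t$ is upper triangular and $\Sigma_n^{\text{cr}}=L^{\text{sb}}\bigl(U^{\text{sb}}+c\pi e_1w^t\bigr)$ (resp.\ $L^{\text{sb}}\bigl(\tfrac1cU^{\text{sb}}+\pi e_1w^t\bigr)$) is already an LU decomposition; regularity plus Theorem \ref{Alg: LR} pins it down as the unique one. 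This is essentially the mechanism the paper only isolates later, in Theorem \ref{Satz: LR-Zerleg. krit. Reg. Zsmh. super. und subkrit. Reg} (phrased there via Frobenius matrices), so your argument in effect proves the proposition and that cross-regime statement simultaneously. What your route buys: it inherits the combinatorial work from the subcritical case instead of repeating the telescoping computation, it explains structurally why $L$ is unchanged and only the first row of $U$ picks up the factor $1+2c\pi$, and it exposes that the only feature of the ``natural increasing powers'' needed for the rank-one trick is $\tau_1=0$ (with $d=2$, $V(W)=1$ fixing the stated constants). What the paper's direct computation buys is self-containedness within the critical-regime section. Your transcription identities $2j\prod_{k=1}^{j-1}(j+k)=\prod_{k=1}^{j}(j+k)$ and $(i+j)\prod_{k=1}^{i-1}(j+k)=\prod_{k=1}^{i}(j+k)$, and the first-row computation $u_{1j}=\pi(1+2c\pi)/(j+1)$ (resp.\ with the extra $c$), all check out.
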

 \begin{proof}
We compute the entries $\sigma_{ij} = \sum_{m=1} ^n l_{im}u_{mj}$ of the given matrix $LU$ for $c \in (0, \, 1]$. For this let $q = \min \{i,j\}$. One sees $ l_{im}u_{mj} = 0$ for $m \in \{ q+1, \ldots , n\}$ and $l_{i1}u_{1j} =2\pi(1+2c\pi)/(i+1)(j+1)$. Thus, 
\begin{align*}
    \sigma_{ij} &=
    \frac{2\pi(1+2c\pi)}{(i+1)(j+1)}+\sum_{m=2} ^q  \frac{\prod_{k=1}^{m-1}(i-k)\prod_{k=1}^{m} (m+k)}{\prod_{k=1}^{m-1} (m-k)\prod_{ k = 1}^{m} ( i + k)}\cdot\frac{\pi\prod_{k=1}^{m-1}(m-k)(j-k)}{(m+j)\prod_{ k = 1}^{m-1} (m+ k) \prod_{k=1}^{m-1}(j + k)} 
    \\
    &=  \frac{2\pi(1+2c\pi)}{(i+1)(j+1)}+\sum_{m=2} ^q  \frac{2\pi m\prod_{k=1}^{m-1}(i-k)(j-k)}{\prod_{ k = 1}^{m} ( i + k)(j + k)}
    \\
    &=  \frac{2\pi(1+2c\pi)}{(i+1)(j+1)}+\sum_{m=1} ^q  \frac{2\pi m\prod_{k=1}^{m-1}(i-k)(j-k)\prod_{k=m+1}^q(i+1)(j+1)}{\prod_{ k = 1}^{q} ( i + k)(j + k)}- \frac{2\pi}{(i+1)(j+1)}\\
       &\overset{(\ref{Eq: Formal Produkt})}{=}  \frac{2\pi(1+2c\pi)}{(i+1)(j+1)}+\frac{\pi}{(i+j)}- \frac{2\pi}{(i+1)(j+1)} = \frac{\pi( 4c\pi(i+j) +(i+1)(j+1) )}{(i+j)(i+1)(j+1)}.
\end{align*}
These are the entries of $\Sigma_n^{\text{cr}}$ as in (\ref{Sigma kritisch}) for the vector of natural increasing powers.
The case for $c \in (1, \, \infty)$ is treated analogously.
\end{proof}

The latter results implies:

\begin{cor}\label{Satz: Determ. nat. Pot. krit. Reg.}
Let $\Sigma_n$ be as in (\ref{Eq: Kov.matrix Laengen Potenz Funktional}) in the critical regime with $n \geq 2$ with respect to the vector of natural increasing powers. For $c \in (0,\, 1]$ the determinant of $\Sigma_n$ is given by 
\[ \emph{det}(\Sigma_n ) = \frac{\pi (1+2c\pi)}{2} \prod_{i=2}^n \frac{\pi\prod_{k=1}^{i-1}(i-k)^2}{2i\prod_{ k = 1}^{i-1} ( i + k)^2}. \] 
For $c \in (1, \, \infty)$ there is an additional factor $c^n$ in the denominator of $\pi(1+2c\pi)/2$.
\end{cor}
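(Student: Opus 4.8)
The plan is to read off the determinant directly from the LU decomposition established in Proposition~\ref{Satz: LR-Zerlegung krit. Reg.}. First I would note that by Corollary~\ref{Satz: pos. Def., Inv., Det. krit. Reg.} the matrix $\Sigma_n$ is positive definite, hence regular and with all leading principal minors nonzero; therefore Gaussian elimination terminates without row interchanges and the decomposition furnished by Theorem~\ref{Alg: LR} has $P = I_n$. Consequently $\Sigma_n = LU$ with $L$ a normalized lower triangular matrix and $U$ an upper triangular matrix, and multiplicativity of the determinant gives
\[
\det(\Sigma_n) = \det(L)\det(U) = 1 \cdot \prod_{i=1}^n u_{ii},
\]
since $\det(L)=1$ and $U$ is upper triangular.

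Next I would evaluate the product of the diagonal entries $u_{ii}$ of the matrix $U$ from Proposition~\ref{Satz: LR-Zerlegung krit. Reg.}. For $c \in (0,1]$ the first branch yields $u_{11} = \pi(1+2c\pi)/(1+1) = \pi(1+2c\pi)/2$, while for $i \geq 2$ setting $j = i$ in the branch for $1 < i \leq j$ gives
\[
u_{ii} = \frac{\pi\prod_{k=1}^{i-1}(i-k)(i-k)}{(i+i)\prod_{k=1}^{i-1}(i+k)\prod_{k=1}^{i-1}(i+k)} = \frac{\pi\prod_{k=1}^{i-1}(i-k)^2}{2i\prod_{k=1}^{i-1}(i+k)^2}.
\]
Taking the product over $i = 1,\dots,n$ then reproduces the asserted formula $\det(\Sigma_n) = \tfrac{\pi(1+2c\pi)}{2}\prod_{i=2}^n \tfrac{\pi\prod_{k=1}^{i-1}(i-k)^2}{2i\prod_{k=1}^{i-1}(i+k)^2}$.

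For $c \in (1,\infty)$ the only difference is that each diagonal entry $u_{ii}$, $i \in [n]$, carries an extra factor $1/c$ (visible in both the $i=1$ and the $1 < i \leq j$ branches of the formula for $U$). Hence the product over the $n$ diagonal entries acquires the factor $c^{-n}$ relative to the previous case, which is exactly the additional $c^n$ in the denominator of $\pi(1+2c\pi)/2$ claimed in the statement.

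I do not anticipate a genuine obstacle here: once Proposition~\ref{Satz: LR-Zerlegung krit. Reg.} is in hand, the corollary is pure bookkeeping. The single point worth a sentence of justification is that $P = I_n$ in Theorem~\ref{Alg: LR}, which follows from positive definiteness of $\Sigma_n$ via the nonvanishing of its leading principal minors. One could equivalently argue from a Cholesky decomposition $\Sigma_n = GG^t$ and compute $\det(\Sigma_n) = \bigl(\prod_{i=1}^n g_{ii}\bigr)^2$, but the LU route is the most economical given the results already available.
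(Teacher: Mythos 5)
Your proposal is correct and is exactly the argument the paper intends: the corollary is stated as an immediate consequence of the LU decomposition in Proposition~\ref{Satz: LR-Zerlegung krit. Reg.}, and $\det(\Sigma_n)=\prod_{i=1}^n u_{ii}$ with the diagonal entries you computed reproduces the formula in both cases for $c$. Your extra remark about $P=I_n$ is harmless but not needed, since the proposition already exhibits $\Sigma_n=LU$ explicitly without a permutation matrix.
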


The LU decomposition of $\Sigma_n$ with respect to the vector of natural increasing powers yields immediately the Cholesky decomposition of $\Sigma_n$: 

\begin{cor} \label{Satz: Cholesky-Zerleg. krit. Reg.}
Let $\Sigma_n$ be as in (\ref{Eq: Kov.matrix Laengen Potenz Funktional}) in the critical regime for $n \geq 2$ with respect to the vector of natural increasing powers. The Cholesky decomposition $ \Sigma_n = G G^t$ is given by $G = (g_{ij})$, such that entries for $c \in (0,1]$ are given by 
\begin{align*} 
g_{ij} = \left\{
\begin{array}{ll}
0& \text{ for } \, j>i, \\
\frac{ 2}{  i + 1}\sqrt{\frac{ \pi (1+ 2c\pi)}{2}}  & \text{ for } \,j=1 \text{ and }  i\geq 1,  
\\ 
\frac{\prod_{k=1}^{j-1}(i-k)\prod_{k=1}^{j} (j+k)}{\prod_{k=1}^{j-1} (j-k)\prod_{ k = 1}^{j} ( i + k)}\sqrt{\frac{\pi\prod_{k=1}^{j-1}(j-k)^2}{2j\prod_{k=1}^{j-1}(j+k)^2}}&  \text{ for }\, 1 <j\leq i.\\
\end{array}
\right.
\end{align*}
For $c \in (1, \,\infty)$ there is an additional $c$ in every denominator of all occurring roots.
\end{cor}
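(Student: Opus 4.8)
The plan is to read off $G$ directly from the LU decomposition already computed in Proposition~\ref{Satz: LR-Zerlegung krit. Reg.}, using the correspondence between LU and Cholesky factorizations recorded in the remark at the end of Section~\ref{Kap: Positive semidefinite symmetric matrices}. First I note that, for the vector of natural increasing powers, $\Sigma_n$ is symmetric and positive definite by Corollary~\ref{Satz: pos. Def., Inv., Det. krit. Reg.}(iii); hence by Algorithm~\ref{Algo: Korrektheit Cholesky Alg. pos. semidef.} its Cholesky decomposition $\Sigma_n = GG^t$ is unique with $g_{ii}>0$, and by Theorem~\ref{Alg: LR} its LU decomposition $\Sigma_n = LU$ is unique with $u_{ii}\neq 0$ for all $i$. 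Point (i) of the cited remark then yields $G = LD^{1/2}$, where $D^{1/2}$ is the diagonal matrix with entries $\sqrt{u_{ii}}$; equivalently, the $j$-th column of $G$ is the $j$-th column of $L$ scaled by $\sqrt{u_{jj}}$.

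The second step is to extract the diagonal of $U$ from Proposition~\ref{Satz: LR-Zerlegung krit. Reg.}. For $c\in(0,1]$ the first row gives $u_{11}=\pi(1+2c\pi)/2$, and specializing $i=j$ in the entry valid for $1<i\le j$ gives $u_{jj}=\frac{\pi\prod_{k=1}^{j-1}(j-k)^2}{2j\prod_{k=1}^{j-1}(j+k)^2}$ for $j\ge 2$; here the product ranges over $k\in\{1,\dots,j-1\}$, so no spurious factor $j-j=0$ is introduced. Combining $g_{ij}=l_{ij}\sqrt{u_{jj}}$ with the entries $l_{i1}=2/(i+1)$ and $l_{ij}=\frac{\prod_{k=1}^{j-1}(i-k)\prod_{k=1}^{j}(j+k)}{\prod_{k=1}^{j-1}(j-k)\prod_{k=1}^{j}(i+k)}$ for $i>j$ (also from Proposition~\ref{Satz: LR-Zerlegung krit. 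Reg.}, with $l_{ii}=1$) reproduces exactly the three cases $j>i$, $j=1$ with $i\ge1$, and $1<j\le i$ of the asserted formula; the vanishing $g_{ij}=0$ for $j>i$ is inherited from $L$ being unit lower triangular, and $g_{ii}=\sqrt{u_{ii}}>0$ confirms positivity of the diagonal.

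Finally, for $c\in(1,\infty)$ every entry of $U$ in Proposition~\ref{Satz: LR-Zerlegung krit. Reg.} carries an additional factor $1/c$, while $L$ is unchanged; hence each $u_{jj}$ acquires a factor $1/c$, and taking square roots produces precisely the extra factor $c$ in the denominator under each occurring root, as claimed. I do not expect a genuine obstacle here: the argument is essentially a one-line application of the LU--Cholesky correspondence, and the only care needed is index bookkeeping in the telescoping products and checking that $l_{ij}\sqrt{u_{jj}}$ collapses to the printed closed form, which is routine.
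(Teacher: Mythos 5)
Your proposal is correct and follows essentially the same route as the paper: the paper likewise invokes uniqueness of the Cholesky factorization, forms the diagonal matrix $D$ from the diagonal entries $u_{jj}$ of the $U$-factor in Proposition~\ref{Satz: LR-Zerlegung krit. Reg.}, and sets $G = LD^{1/2}$. Your index bookkeeping (including $u_{11}=\pi(1+2c\pi)/2$, the specialization $i=j$ for $j\geq 2$, and the extra $1/c$ in $U$ for $c\in(1,\infty)$) matches the paper's computation.
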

   
\begin{proof}
The unique Cholesky decomposition of $\Sigma_n$ exists due to Algorithm \ref{Algo: Korrektheit Cholesky Alg. pos. semidef.} and it can be derived from the LU decomposition $\Sigma_n = LU$ determined in Proposition \ref{Satz: LR-Zerlegung krit. Reg.}. 

For this, define the diagonal matrix $D= (d_{ij})$ with diagonal entries from $U$ given by
\begin{align*}
d_{ij} &= \left\{
\begin{array}{ll}
0&   \text{ for } \, j\neq i, \\
\frac{\pi(1+2c\pi)}{(j+1)} & \text{ for }\, j=i=1 \text{ and } c \in (0,\,1], \\
\frac{\pi(1+2c\pi)}{c(j+1)}, & \text{ for }\, j=i=1  \text{ and }  c \in (1,\, \infty), \\
\frac{\pi\prod_{k=1}^{j-1}(j-k)^2}{2j\prod_{k=1}^{j-1}(j+k)^2}& \text{ for } \,j=i>1 \text{ and } c \in (0,\,1],\\
\frac{\pi\prod_{k=1}^{j-1}(j-k)^2}{2cj\prod_{k=1}^{j-1}(j+k)^2}  &\text{ for }\, j=i>1  \text{ and } c \in (1, \, \infty).\\
\end{array}
\right. 
\end{align*}

This yields $G=LD^{\frac{1}{2}}$. This concludes the proof.
\end{proof}

Finally, we compute an example of a Cholesky decomposition of $\Sigma_2$ for arbitrary powers, which will be used in Section \ref{Kap: stoch. appl.}.

\begin{exa} \label{Bsp.Cholesky krit. Reg.}
Consider $\Sigma_2$ in the critical regime. For $c \in (0,1]$ we see
\[\Sigma_2= \frac{V(W)d\kappa_d}{2}\left( \begin{array}{cccc}   
\frac{4d\kappa_dcx_1+a_1^2}{2x_1a_1^2}& \frac{2d\kappa_dc(x_1+x_2)+a_1a_2}{(x_1+x_2)a_1a_2} \\
\frac{2d\kappa_dc(x_1+x_2)+a_1a_2}{(x_1+x_2)a_1a_2}& \frac{4d\kappa_dcx_2+a_2^2}{2x_2a_2^2} \\
\end{array}\right)\] and therefore 
\[
\Sigma_2 = GG^t
\]
with the following matrix $G$:
\begin{align*}
 \sqrt{ \frac{V(W)d\kappa_d}{2}}\left( \begin{array}{cccc}   
\sqrt{\frac{4d\kappa_dcx_1+a_1^2}{2x_1a_1^2}}& 0 \\
\frac{(2d\kappa_dc(x_1+x_2)+a_1a_2)\sqrt{2x_1}}{a_2(x_1+x_2)\sqrt{4d\kappa_dcx_1+a_1^2}}& \sqrt{\frac{4d\kappa_dcx_2+a_2^2}{2x_2a_2^2}-\biggl(\frac{(2d\kappa_dc(x_1+x_2)+a_1a_2)\sqrt{2x_1}}{a_2(x_1+x_2)\sqrt{4d\kappa_dcx_1+a_1^2}}\biggr)^2}  \\
\end{array}\right). \\
\end{align*}
For $\tau_1 = 0 $ and $\tau_2\in \R$ we get for instance
\begin{align*}
G&=  \sqrt{\frac{V(W)d\kappa_d}{2}}\left( \begin{array}{cccc}   
\sqrt{\frac{2\kappa_d c+1}{d}}& 0 \\
\frac{\sqrt{(2\kappa_dc+1)d}}{a_2}& \sqrt{\frac{\tau^2}{2x_2a_2^2}} \\
\end{array}\right).
\end{align*}
For $c \in (1,\,\infty)$ there is an additional $\sqrt{c}$ in each denominator of the entries of the matrix $G$.
\end{exa}
\section{Across-regimes investigations} \label{Kap. Regimeunabhaengige Untersuchung II}
In this Section  some across regimes results regarding decompositions are presented. As before, we use  labels $\text{cr}, \, \text{sb}$ or $\text{sp}$ as well as notations $l_{ij}, u_{ij}$ and $g_{ij}$ for the entries of corresponding matrices in decompositions of interest. 

\begin{thm} \label{Satz: LR-Zerleg. krit. Reg. Zsmh. super. und subkrit. Reg}
Let $\Sigma_n^{\text{cr}}$ be as in (\ref{Eq: Kov.matrix Laengen Potenz Funktional}) in the critical regime for $n \geq 2$ with respect to the vector of natural increasing powers. 
Consider the LU decompositions 
$\Sigma_n^{\text{sb}}=L^{\text{sb}} U^{\text{sb}}$
and 
$\Sigma_n^{\text{sp}}=L^{\text{sp}}U^{\text{sp}}$.
Then the entries of the matrices $L^{\text{cr}},\,U^{\text{cr}}$ 
of the LU decomposition of $\Sigma_n^{\text{cr}}$ are given as:
\begin{enumerate}
\item For $c \in (0,\, 1]$  we have $l_{ij}^{\text{cr}} =l^{\text{sb}}_{ij} \,\text{ for } \,i,\,j \in [n] \,\text{ and } \, u_{ij}^{\text{cr}} = u_{ij}^{\text{sb}} +cu_{ij}^{\text{sp}}.$
\item For $ c \in (1, \, \infty)$  we have
$l_{ij}^{\text{cr}} =l^{\text{sb}}_{ij} \,\text{ for }\,i,\,j \in [n] \,\text{ and } \, u_{ij}^{\text{cr}} = \frac{1}{c} u_{ij}^{\text{sb}} + u_{ij}^{\text{sp}} .$
\end{enumerate}
\end{thm}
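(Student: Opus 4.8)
The plan is to work directly with the sum decomposition of $\Sigma_n^{\text{cr}}$ provided by Theorem \ref{Satz: Kov.matrix Laengen Potenz Funktional}, namely $\Sigma_n^{\text{cr}} = \Sigma_n^{\text{sb}} + c\,\Sigma_n^{\text{sp}}$ for $c\in(0,1]$ and $\Sigma_n^{\text{cr}} = \tfrac1c\,\Sigma_n^{\text{sb}} + \Sigma_n^{\text{sp}}$ for $c\in(1,\infty)$, and to show that the supercritical summand can be factored using the \emph{subcritical} lower triangular matrix. The point is that, by Theorem \ref{Satz: LR-Zerleg. superkrit. Reg.}(i), the matrix $U^{\text{sp}}$ is zero outside its first row; hence $\Sigma_n^{\text{sp}} = L^{\text{sp}}U^{\text{sp}}$ equals the outer product of the first column of $L^{\text{sp}}$ with the first row of $U^{\text{sp}}$, and this outer product depends on $L^{\text{sp}}$ only through its first column.

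The key observation is therefore that, for the vector of natural increasing powers, the first columns of $L^{\text{sb}}$ and $L^{\text{sp}}$ coincide. Indeed, Theorem \ref{Satz: LR-Zerleg. subkrit. Reg.} gives $l^{\text{sb}}_{i1} = 2x_1/(x_1+x_i)$ and Theorem \ref{Satz: LR-Zerleg. superkrit. Reg.}(i) gives $l^{\text{sp}}_{i1} = a_1/a_i$; substituting $x_i = i$ and $a_i = i+1$ (i.e.\ $\tau_i = i-1$, $d=2$) both become $2/(i+1)$. Consequently $\Sigma_n^{\text{sp}} = L^{\text{sp}}U^{\text{sp}} = L^{\text{sb}}U^{\text{sp}}$.

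It remains to assemble the LU decomposition. For $c\in(0,1]$,
\[
\Sigma_n^{\text{cr}} = L^{\text{sb}}U^{\text{sb}} + c\,L^{\text{sb}}U^{\text{sp}} = L^{\text{sb}}\bigl(U^{\text{sb}} + c\,U^{\text{sp}}\bigr),
\]
and since $L^{\text{sb}}$ is unit lower triangular and $U^{\text{sb}} + cU^{\text{sp}}$ is upper triangular, this is an LU decomposition of $\Sigma_n^{\text{cr}}$ with $P = I_n$. As $\Sigma_n^{\text{cr}}$ is regular by Corollary \ref{Satz: pos. Def., Inv., Det. krit. Reg.}, such a decomposition is unique by Theorem \ref{Alg: LR}, so it coincides with the one in Proposition \ref{Satz: LR-Zerlegung krit. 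Reg.}; reading off entries yields $l^{\text{cr}}_{ij} = l^{\text{sb}}_{ij}$ and $u^{\text{cr}}_{ij} = u^{\text{sb}}_{ij} + c\,u^{\text{sp}}_{ij}$ for all $i,j\in[n]$. For $c\in(1,\infty)$ the identical computation with $\tfrac1c\Sigma_n^{\text{sb}} + \Sigma_n^{\text{sp}} = L^{\text{sb}}\bigl(\tfrac1c U^{\text{sb}} + U^{\text{sp}}\bigr)$ gives $l^{\text{cr}}_{ij} = l^{\text{sb}}_{ij}$ and $u^{\text{cr}}_{ij} = \tfrac1c u^{\text{sb}}_{ij} + u^{\text{sp}}_{ij}$.

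The only genuine obstacle is the bookkeeping behind the coincidence of the first columns of $L^{\text{sb}}$ and $L^{\text{sp}}$ — this is exactly what makes $L^{\text{sp}}$ dispensable and allows $L^{\text{sb}}$ to be pulled out of the sum — together with checking that the appeal to uniqueness is legitimate, which it is because $\Sigma_n^{\text{cr}}$ is positive definite. Alternatively one can bypass uniqueness entirely and verify the identities by substituting $x_i = i$, $a_i = i+1$, $d=2$ into the explicit formulas of Proposition \ref{Satz: LR-Zerlegung krit. Reg.}, Theorem \ref{Satz: LR-Zerleg. subkrit. Reg.} and Theorem \ref{Satz: LR-Zerleg. superkrit. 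Reg.}; there the sole nonobvious step is the telescoping $\prod_{k=1}^{j}(j+k) = 2j\prod_{k=1}^{j-1}(j+k)$ reconciling the two displayed forms of $l_{ij}$, together with $\prod_{k=1}^{i}(x_j+x_k) = (x_i+x_j)\prod_{k=1}^{i-1}(x_j+x_k)$ for the $u_{ij}$.
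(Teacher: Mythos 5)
Your proof is correct and follows essentially the same route as the paper's: both hinge on the observation that, for the vector of natural increasing powers, the first columns of $L^{\text{sb}}$ and $L^{\text{sp}}$ coincide and $U^{\text{sp}}$ is supported on its first row, so that $L^{\text{sb}}$ can be pulled out of the sum $\Sigma_n^{\text{sb}} + c\,\Sigma_n^{\text{sp}}$. The paper phrases this by tracking the Frobenius matrices of the LU elimination step by step, whereas you factor $\Sigma_n^{\text{sp}} = L^{\text{sb}}U^{\text{sp}}$ directly as an outer product and then invoke uniqueness of the LU decomposition of the regular matrix $\Sigma_n^{\text{cr}}$ --- a marginally cleaner packaging of the same argument.
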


\begin{proof}
Let $c \in (0,\, 1]$. Theorem \ref{Satz: Kov.matrix Laengen Potenz Funktional} yields 
\[
(L^{\text{sp}})^{-1} \Sigma_n^{\text{cr}} = (L^{\text{sp}})^{-1} \Sigma_n^{\text{sb}} + (L^{\text{sp}})^{-1} c\Sigma_n^{\text{sp}}.
\]
Since the first column 
of the matrices $L^{\text{sp}}$ and $L^{\text{sb}}$ coincide, it follows
\begin{equation}
\label{eq-LU-crit}
(L^{\text{sp}})^{-1} \Sigma_n^{\text{cr}} = (U^{(1)})^{\text{sb}} +  cU^{\text{sp}}.
\end{equation}
Here on the right hand, 
the multiplication with $(L^{\text{sp}})^{-1}$ corresponds to the use of
the first (Frobenius) matrix in an LU algorithm (see Theorem \ref{Alg: LR}).
For the first summand $(U^{(1)})^{\text{sb}} $ denotes the resulting matrix by multiplying with $\Sigma_n^{\text{sb}}$. For the second summand we have $U^{\text{sp}}=(L^{\text{sp}})^{-1} \Sigma_n^{\text{sp}}$.

Indeed,  according to Theorem \ref{Satz: LR-Zerleg. superkrit. Reg.}(i) the matrix $U^{\text{sp}}$ has only zeros below the first row, so transformations in the algorithm using left multiplications of further (Frobenius) matrices (which are zero in the places $(i,j)$ with $i>j=1$) do not change anything. 

We apply the remaining $ n-2 $ steps of the LU algorithm determining the LU decomposition of $\Sigma^{\text{sb}}_n $ to both sides of the Equation (\ref{eq-LU-crit}), i.e.~multiplications from the left with suitable Frobenius matrices to transform $ \Sigma ^ { \text{sb}}_n $ into an upper triangle matrix. Multiplying all these lower triangular matrices yields exactly $(L^{\text{sb}})^{-1}$. Hence,
\[
(L^{\text{sb}})^{-1} \Sigma_n^{\text{cr}} 
= 
(L^{\text{sb}})^{-1} \Sigma_n^{\text{sb}} + cU^{\text{sp}}
= 
U^{\text{sb}} + cU^{\text{sp}}.
\]
The right side of the latter equation is an upper triangular matrix and the left side is a lower triangular matrix multiplied with $\Sigma_n^{\text{cr}}$. This leads to the LU decomposition of $\Sigma_n^{\text{cr}}$: 
\[ \Sigma_n^{\text{cr}} =L^{\text{sb}} ( U^{\text{sb}} + cU^{\text{sp}} ).\]
For $c \in (1, \,\infty)$ one uses the same transformations and sees
$ \Sigma_n^{\text{cr}} =L^{\text{sb}} ( \frac{1}{c} U^{\text{sb}} + U^{\text{sp}} ).$ 
\end{proof}

\begin{rem} 
\label{Bem: LR Zsmhg. krit., subkrit.} 
Since $cu_{1j}^{\text{sp}} =  2c\pi\frac{\pi}{j+1}=2c\pi u_{1j}^{\text{sb}} \text{ and } u_{1j}^{\text{sp}} = \frac{2\pi^2}{j+1} = 2c\pi \frac{u_{1j}^{\text{sb}}}{c}$, all entries of $L^{\text{cr}}$ and $U^{\text{cr}}$ can be computed using only the entries of $L^{\text{sb}}$ and $U^{\text{sb}}$. Thus, for $ i,\, j \in [n]$ we have
\begin{align*}
l_{ij}^{\text{cr}} &= l_{ij}^{\text{sb}} \, \text{ and } \, u_{ij}^{\text{cr}} = m u_{ij}^{\text{sb}}\,  
\text{ with }\, m=\left\{
\begin{array}{ll}
1+ 2c\pi &  \text{ for }  \,c \in (0, \, 1], \\ 
\frac{1+2c\pi}{c}  & \text{ for } \,c \in  (1, \, \infty).\\ 
\end{array} \right.
\end{align*}

\end{rem}

It is easy to see that Theorem \ref{Satz: LR-Zerleg. krit. Reg. Zsmh. super. und subkrit. Reg} can not be generalized to an arbitrary vector of length power functionals. 
For example, the choices $\tau_1 =1, \, \tau_2=3, \, \tau_3=5$, $d = 2$, and $c = 1 $
leads to a counterexample. For Cholesky decompositions we get:

\begin{thm} \label{Satz: Cholesky-Zerleg. krit. Reg. Zsmhg. superkrit und krit. Reg.}
Let $\Sigma_n^{\text{cr}}$ be as in (\ref{Eq: Kov.matrix Laengen Potenz Funktional}) in the critical regime for $n \geq 2$ to the vector of natural increasing powers. Then for Cholesky factors $G^{\emph{cr}}$ of $\Sigma_n^{\text{cr}}$ we have  $g_{ij}^{\emph{cr}} = g^{\emph{sb}}_{ij} \, \text{ for } \,j>1 \text{ and }  g_{i1}^{\emph{cr}} = v g^{\emph{sb}}_{i1}$
with
\[v =\left\{
\begin{array}{ll}
 \sqrt{1+\frac{c u_{11}^{\emph{sp}}}{ u_{11}^{\emph{sb}}}} & \emph{ for } \, c \in (0, \, 1], \\ 
\sqrt{\frac{1}{c}+\frac{ u_{11}^{\emph{sp}}}{u_{11}^{\emph{sb}}}}  &\emph{ for }\, c \in  (1, \, \infty).\\ 
\end{array} \right.\]
\end{thm}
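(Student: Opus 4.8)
The plan is to obtain the Cholesky factor $G^{\text{cr}}$ of $\Sigma_n^{\text{cr}}$ directly from its LU factorisation $\Sigma_n^{\text{cr}}=L^{\text{cr}}U^{\text{cr}}$, which is already available by Proposition~\ref{Satz: LR-Zerlegung krit. Reg.} and, crucially for us, is related to the subcritical one through Theorem~\ref{Satz: LR-Zerleg. krit. Reg. Zsmh. super. und subkrit. Reg}. The bridge is the passage from LU to Cholesky recorded after Algorithm~\ref{Algo: Korrektheit Cholesky Alg. pos. semidef.}: for a symmetric positive definite $A$ with unique factorisations $A=LU=GG^t$ (no pivoting is needed since all leading principal minors are positive, cf.\ Theorem~\ref{Alg: LR}), one has $G=LD^{1/2}$ with $D^{1/2}=\diag(\sqrt{u_{11}},\dots,\sqrt{u_{nn}})$. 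Since $\Sigma_n^{\text{cr}}$ is positive definite by Corollary~\ref{Satz: pos. Def., Inv., Det. krit. Reg.} and $\Sigma_n^{\text{sb}}$ is positive definite by Corollary~\ref{Kor: Det. Kov.matrix subkrit. Reg.}, both Cholesky factors $G^{\text{cr}}$ and $G^{\text{sb}}$ exist, are unique, and are recovered this way; so it suffices to compare $L^{\text{cr}}(D^{\text{cr}})^{1/2}$ with $L^{\text{sb}}(D^{\text{sb}})^{1/2}$ column by column.

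The key structural input is that, by Theorem~\ref{Satz: LR-Zerleg. superkrit. Reg.}(i), the factor $U^{\text{sp}}$ of the corresponding supercritical matrix has all its nonzero entries in its first row; in particular $u^{\text{sp}}_{jj}=0$ for every $j\ge 2$, while $u^{\text{sp}}_{11}=V(W)(d\kappa_d)^2/a_1^2\neq 0$. Feeding this into Theorem~\ref{Satz: LR-Zerleg. krit. Reg. Zsmh. super. und subkrit. Reg}: for $c\in(0,1]$ one has $L^{\text{cr}}=L^{\text{sb}}$ and $U^{\text{cr}}=U^{\text{sb}}+cU^{\text{sp}}$, so the diagonal of $U^{\text{cr}}$ agrees with that of $U^{\text{sb}}$ except in position $(1,1)$, where $u^{\text{cr}}_{11}=u^{\text{sb}}_{11}+cu^{\text{sp}}_{11}=u^{\text{sb}}_{11}\bigl(1+cu^{\text{sp}}_{11}/u^{\text{sb}}_{11}\bigr)$. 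For $c\in(1,\infty)$ the identity $U^{\text{cr}}=\tfrac1c U^{\text{sb}}+U^{\text{sp}}$ gives likewise $u^{\text{cr}}_{jj}=\tfrac1c u^{\text{sb}}_{jj}$ for $j\ge 2$ and $u^{\text{cr}}_{11}=u^{\text{sb}}_{11}\bigl(\tfrac1c+u^{\text{sp}}_{11}/u^{\text{sb}}_{11}\bigr)$. In both cases the scalar inside the square root is precisely the $v$ of the statement.

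It remains to translate this into $G$. From $g^{\text{cr}}_{ij}=l^{\text{cr}}_{ij}\sqrt{u^{\text{cr}}_{jj}}=l^{\text{sb}}_{ij}\sqrt{u^{\text{cr}}_{jj}}$ and $g^{\text{sb}}_{ij}=l^{\text{sb}}_{ij}\sqrt{u^{\text{sb}}_{jj}}$ we get
\[
G^{\text{cr}}=L^{\text{cr}}(D^{\text{cr}})^{1/2}=L^{\text{sb}}(D^{\text{sb}})^{1/2}\diag\bigl(\sqrt{u^{\text{cr}}_{11}/u^{\text{sb}}_{11}},\dots,\sqrt{u^{\text{cr}}_{nn}/u^{\text{sb}}_{nn}}\bigr)=G^{\text{sb}}\diag\bigl(\sqrt{u^{\text{cr}}_{11}/u^{\text{sb}}_{11}},\dots,\sqrt{u^{\text{cr}}_{nn}/u^{\text{sb}}_{nn}}\bigr).
\]
For $c\in(0,1]$ this diagonal matrix is $\diag(v,1,\dots,1)$, which rescales only the first column of $G^{\text{sb}}$ by $v$ and leaves the other columns untouched; reading off entries yields $g^{\text{cr}}_{i1}=v\,g^{\text{sb}}_{i1}$ and $g^{\text{cr}}_{ij}=g^{\text{sb}}_{ij}$ for $j>1$. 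The case $c\in(1,\infty)$ is treated in exactly the same way, the scalar $v$ now being $\sqrt{\tfrac1c+u^{\text{sp}}_{11}/u^{\text{sb}}_{11}}$ and the common rescaling coming from the factor $1/c$ carried by the subcritical part in the decomposition of $\Sigma_n^{\text{cr}}$.

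The argument needs no serious computation; the two points to watch, and the only (mild) obstacle, are: (a) one must invoke positive definiteness in the critical regime (Corollary~\ref{Satz: pos. Def., Inv., Det. krit. Reg.}) so that the triangular factors are genuinely unique and $G=LD^{1/2}$ delivers \emph{the} Cholesky factor — in particular that all $u^{\text{cr}}_{jj}>0$; and (b) the $1/c$ multiplying $\Sigma_n^{\text{sb}}$ when $c>1$ must be threaded consistently through $U^{\text{cr}}$ and hence through the diagonal scaling that turns $G^{\text{sb}}$ into $G^{\text{cr}}$, so the case distinction $c\lessgtr 1$ is the one place a small error could creep in.
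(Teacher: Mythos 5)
Your route is the same as the paper's: both proofs obtain $G^{\mathrm{cr}}$ from the LU factorisation via $G=LD^{1/2}$ and then feed in the cross-regime relation $L^{\mathrm{cr}}=L^{\mathrm{sb}}$ and $U^{\mathrm{cr}}=U^{\mathrm{sb}}+cU^{\mathrm{sp}}$ (resp.\ $\tfrac1c U^{\mathrm{sb}}+U^{\mathrm{sp}}$) from Theorem \ref{Satz: LR-Zerleg. krit. Reg. Zsmh. super. und subkrit. Reg}, together with the fact that $U^{\mathrm{sp}}$ is supported on its first row. For $c\in(0,1]$ your argument is complete and correct: the scaling matrix is $\diag(v,1,\dots,1)$ and both claimed identities follow.

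For $c\in(1,\infty)$, however, your closing sentence does not close the argument. By your own formula the scaling matrix is $\diag\bigl(\sqrt{u^{\mathrm{cr}}_{11}/u^{\mathrm{sb}}_{11}},\,1/\sqrt{c},\dots,1/\sqrt{c}\bigr)$, since $u^{\mathrm{cr}}_{jj}=\tfrac1c u^{\mathrm{sb}}_{jj}$ for $j\ge 2$; this yields $g^{\mathrm{cr}}_{ij}=g^{\mathrm{sb}}_{ij}/\sqrt{c}$ for $j>1$, not $g^{\mathrm{cr}}_{ij}=g^{\mathrm{sb}}_{ij}$. Saying the case is ``treated in exactly the same way'' with a ``common rescaling coming from the factor $1/c$'' is precisely where the stated conclusion fails to follow: that $1/\sqrt{c}$ cannot be absorbed into $v$, which only touches the first column. (This is consistent with Corollary \ref{Satz: Cholesky-Zerleg. krit. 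Reg.}, whose entries for $c>1$ carry an additional $c$ in every denominator under the roots; the paper's own proof is equally silent here, asserting that $U^{\mathrm{cr}}$ and $U^{\mathrm{sb}}$ differ only in their first rows, which Theorem \ref{Satz: LR-Zerleg. krit. Reg. Zsmh. super. und subkrit. Reg} supports only for $c\le 1$.) To make the $c>1$ case honest you must either record the extra factor $1/\sqrt{c}$ in the columns $j>1$, or compare $G^{\mathrm{cr}}$ with the Cholesky factor of $\tfrac1c\Sigma_n^{\mathrm{sb}}$ rather than of $\Sigma_n^{\mathrm{sb}}$ itself.
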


\begin{proof} 
A comparison of Corollaries \ref{Satz: Cholesky-Zerleg. subkrit. Reg.} and \ref{Satz: Cholesky-Zerleg. krit. Reg.} shows that $g_{ij}^{\text{cr}} $ and $g_{ij}^{\text{sb}}$ coincide in all entries, except for those in the first column. The factor by which $g_{i1}^{\text{cr}}$ and $g_{i1}^{\text{sb}}$ differ, arises by the use of a diagonal matrix similar to the one used in the proof of Corollary \ref{Satz: Cholesky-Zerleg. krit. Reg.}. Indeed, the matrices $L^{\text{cr}}$ and $L^{\text{sb}}$ are identical. The same is true for the matrices $U^{\text{cr}}$ and $U^{\text{sb}}$, except for their first rows. Here we have  $u_{1j}^{\text{cr}}= u_{1j}^{\text{sb}} + c u_{1j}^{\text{sp}}$ for $ c \in (0,\, 1]$ and  $u_{1j}^{\text{cr}}= \frac{1}{c}u_{1j}^{\text{sb}} +  u_{1j}^{\text{sp}}$ for $ c \in (1,\, \infty)$ according to Theorem \ref{Satz: LR-Zerleg. krit. Reg. Zsmh. super. und subkrit. Reg}. Set
\begin{align*}
m=\frac{u_{11}^{\text{cr}}}{u_{11}^{\text{sb}}} = \left\{
\begin{array}{ll}
 1+\frac{c u_{11}^{\text{sp}}}{u_{11}^{\text{sb}}}   & \text{ for }  c \in (0, \, 1], \\ 
\frac{1}{c}+\frac{u_{11}^{\text{sp}}}{u_{11}^{\text{sb}}}  &  \text{ for } c \in  (1, \, \infty).\\ 
\end{array} \right.
\end{align*} 
Observe that the entries $d_{11}^{\text{cr}}$ of $D^{\text{cr}}$ and $d^{\text{sb}}_{11}$ of $D^{\text{sb}}$ from the proof of Corollary \ref{Satz: Cholesky-Zerleg. krit. Reg.} are related as $d_{11}^{\text{cr}}=m d_{11}^{\text{sb}}$. Hence, 
\[ g_{i1}^{\text{sb}} =l_{i1}^{\text{sb}} \sqrt{d_{11}^{\text{sb}} } \,\text{ and }\,  g_{i1}^{\text{cr}} =l_{i1}^{\text{cr}}  \sqrt{d_{11}^{\text{cr}} }=l_{i1}^{\text{sb}} \sqrt{m d_{11}^{\text{sb}} }.\]
Thus, setting $ v = \sqrt{m}$ one obtains $g_{i1}^{\text{cr}} = v  g_{i1}^{\text{sb}}.$ 
\end{proof}

Regarding the Cholesky decomposition there arise observations similar to Remark \ref{Bem: LR Zsmhg. krit., subkrit.}:

\begin{rem}\label{Bem: Cholesky Zsmhg. krit., subkrit.} 
It is possible to give a formula for $G^{\text{cr}}$ with respect to the vector of natural increasing powers without using $U^{\text{sp}}$ (but still $U^{\text{sb}}$) in contrast to the one determined in Theorem \ref{Satz: Cholesky-Zerleg. krit. Reg. Zsmhg. superkrit und krit. Reg.}.
For this observe at first that comparing the entries $u_{11}^{\text{sb}}$ and $u_{11}^{\text{sp}}$ one gets $u_{11}^{\text{sp}} = 2\pi u_{11}^{\text{sb}}$. Using the notation $m$ and $v$ as in the proof of Theorem \ref{Satz: Cholesky-Zerleg. krit. Reg. Zsmhg. superkrit und krit. Reg.} this leads to
\begin{align*}
m = \left\{
\begin{array}{ll}
 1+2c\pi & \text{ for }  c \in (0, \, 1], \\ 
\frac{1}{c}+2\pi  & \text{ for } c \in  (1, \, \infty),\\ 
\end{array} \right. 
\text{ and }
v= \left\{
\begin{array}{ll}
 \sqrt{1+2c\pi} ,  &  \text{ for }  c \in (0, \, 1], \\ 
\sqrt{\frac{1+2c\pi}{c}} &  \text{ for } c \in  (1, \, \infty).\\ 
\end{array} \right.
\end{align*}
Additionally note that exactly this constant $m$ occurred already in  Remark \ref{Bem: LR Zsmhg. krit., subkrit.} concerning the LU decomposition of $\Sigma_n^{\text{cr}}$.
\end{rem}

\section{Stochastic applications} \label{Kap: stoch. appl.}
In this section we consider some of the main results of this manuscript from an stochastic point of view. According to \cite[Theorem 5.2.]{Reitzner} $(\tilde{L}_t^{(\tau_1)}, \ldots ,\tilde{L}_t^{(\tau_n)})$ is asymptotic normal distributed. Recall that $a_i = \tau_i +d$ and abbreviate $a=\tau+d$ as well as $x = \tau+d/2$. With this we can state the following results:

\begin{thm}\label{thm:stochconv-dense}
Assume that $t\delta_t^d \rightarrow \infty$ and $\tau_1, \tau_2 > -d/2$. Then 
$$
D_t^{(\tau_1, \tau_2)} = a_1 \tilde L_t^{(\tau_1)} - a_2 \tilde L_t^{(\tau_2)}
\stackrel{\mathds P}{\to} 0 \text{ as $t \to \infty$.}$$
\end{thm}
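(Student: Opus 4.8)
The plan is to establish the stronger statement that $D_t^{(\tau_1,\tau_2)} \to 0$ in $L^2$, from which convergence in probability follows at once. Since every $\tilde L_t^{(\tau)}$ is centered by the definition (\ref{Eq: normiertes, zentriertes Laengen Potenz Funktional}), we have $\mathbb{E}\big(D_t^{(\tau_1,\tau_2)}\big) = a_1\,\mathbb{E}(\tilde L_t^{(\tau_1)}) - a_2\,\mathbb{E}(\tilde L_t^{(\tau_2)}) = 0$ for every $t$, so $\mathbb{E}\big((D_t^{(\tau_1,\tau_2)})^2\big) = \mathrm{Var}\big(D_t^{(\tau_1,\tau_2)}\big)$, and it suffices to show that this variance tends to $0$.

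Writing $D_t^{(\tau_1,\tau_2)} = u^t\,(\tilde L_t^{(\tau_1)},\tilde L_t^{(\tau_2)})^t$ with the fixed vector $u=(a_1,-a_2)^t\in\R^2$, I would expand
\[
\mathrm{Var}\!\left(D_t^{(\tau_1,\tau_2)}\right)
= a_1^2\,\mathrm{Var}(\tilde L_t^{(\tau_1)}) - 2 a_1 a_2\,\mathrm{Cov}(\tilde L_t^{(\tau_1)},\tilde L_t^{(\tau_2)}) + a_2^2\,\mathrm{Var}(\tilde L_t^{(\tau_2)}).
\]
By the definition of the asymptotic covariance matrix recalled at the start of Section \ref{Kap: Positive semidefinite symmetric matrices}, each of these finitely many terms converges, so the right-hand side tends to $u^t\Sigma_2\,u$, where $\Sigma_2$ is the asymptotic covariance matrix of $(\tilde L_t^{(\tau_1)},\tilde L_t^{(\tau_2)})$. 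In the supercritical regime $t\delta_t^d\to\infty$, Theorem \ref{Satz: Kov.matrix Laengen Potenz Funktional} together with (\ref{Eq: Kov.matrix Laengen Potenz Funktional supercritical}) gives $\Sigma_2=\Sigma_2^{\text{sp}}=V(W)d^2\kappa_d^2\big(1/(a_ia_j)\big)$.

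It then remains to observe that $u^t\Sigma_2\,u=0$. By Theorem \ref{Satz: EW.e, ER.e superkrit. Reg.} in the case $n=2$ (equivalently Example \ref{Bsp: EW.e, ER.e superkrit. Reg.}), the vector $(a_1/a_2,-1)^t$ spans the kernel $\mathrm{eig}(\Sigma_2,\lambda_1)$ with $\lambda_1=0$, and since $u=a_2\,(a_1/a_2,-1)^t$ lies in this kernel we get $\Sigma_2\,u=0$, hence $u^t\Sigma_2\,u=0$; equivalently, $\Sigma_2^{\text{sp}}=ww^t$ with $w=\sqrt{V(W)}\,d\kappa_d\,(1/a_1,1/a_2)^t$, so $u^t\Sigma_2\,u=(u^tw)^2=\big(V(W)d^2\kappa_d^2\big)(1-1)^2=0$. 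Therefore $\mathrm{Var}\big(D_t^{(\tau_1,\tau_2)}\big)\to 0$, which yields $D_t^{(\tau_1,\tau_2)}\to 0$ in $L^2$ and in particular in probability. I do not expect a genuine obstacle here: the only substantive input is that $u$ lies in the kernel of $\Sigma_2$, which is exactly the rank-one (degenerate) feature of the supercritical regime established in Corollary \ref{Kor: Rang superkrit. Reg.}. Alternatively, one could invoke the asymptotic normality of $(\tilde L_t^{(\tau_1)},\tilde L_t^{(\tau_2)})$ from \cite[Theorem 5.2]{Reitzner}, which forces $D_t^{(\tau_1,\tau_2)}$ to be asymptotically $N(0,u^t\Sigma_2\,u)=N(0,0)$, i.e.\ to converge in distribution to the constant $0$, and hence in probability.
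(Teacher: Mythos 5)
Your proposal is correct, and your primary route is genuinely different from the paper's. The paper proves this theorem by diagonalizing the supercritical covariance matrix: it takes the orthogonal (rotation) matrix $R$ built from the eigenvectors found in Theorem \ref{Satz: EW.e, ER.e superkrit. Reg.}/Corollary \ref{Kor: Diag.matrix, char. Poly. superkrit. Reg.}, invokes the asymptotic normality of $(\tilde L_t^{(\tau_1)},\tilde L_t^{(\tau_2)})$ from \cite[Theorem 5.2]{Reitzner}, and observes that the first coordinate of $R^{-1}(\tilde L_t^{(\tau_1)},\tilde L_t^{(\tau_2)})^t$ — which is proportional to $D_t^{(\tau_1,\tau_2)}$ — converges in distribution to a degenerate normal $\mathcal N(0,0)$, hence to the constant $0$ in probability. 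This is exactly the "alternative" you sketch in your last sentence. Your main argument instead computes $\mathrm{Var}(D_t^{(\tau_1,\tau_2)})\to u^t\Sigma_2 u=0$ directly and concludes $L^2$ convergence via Chebyshev; this is legitimate within the paper's framework, since the displayed matrix at the start of Section \ref{Kap: Positive semidefinite symmetrische matrices} is precisely the entrywise limit of the variances and covariances, so the quadratic form in $u=(a_1,-a_2)^t$ converges to $u^t\Sigma_2^{\text{sp}}u=(u^tw)^2=0$. Both proofs hinge on the same key fact — that $(a_1,-a_2)^t$ spans the kernel of the rank-one matrix $\Sigma_2^{\text{sp}}$ — but your route is more elementary (no central limit theorem needed) and delivers the formally stronger $L^2$ convergence, while the paper's route is structured to parallel the Cholesky-factor arguments it reuses for the critical and subcritical regimes in Theorem \ref{thm:stochconv-(sub)crit}. (Minor remark: the paper's proof writes $\Sigma_2^{\text{sb}}$ where it means $\Sigma_2^{\text{sp}}$; your identification of the relevant matrix is the correct one.)
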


\begin{proof}
From the Schur decomposition of $\Sigma_2^{\text{sb}}$ in Corollary \ref{Kor: Diag.matrix, char. Poly. superkrit. Reg.} we gain the rotation matrix \[R = \left( \begin{array}{ccc}   
\frac{a_1}{\sqrt{a_1^2+a_2^2}} & \frac{a_2}{\sqrt{a_1^2+a_2^2}} \\
\frac{-a_2}{\sqrt{a_1^2+a_2^2}} & \frac{a_1}{\sqrt{a_1^2+a_2^2}} \\
\end{array}\right),\] 
such that
$R^{-1}(\tilde{L}_t^{(\tau_1)}, \tilde{L}_t^{(\tau_2)}) ^t\rightarrow N$ with $N \sim \mathcal{N}(0, \, \Sigma)$, where \[\Sigma = \left( \begin{array}{cccc}   
0& 0 \\
0 & V(W)(d\kappa_d)^2(\sum_{i=1}^n 1/a_i)  \\
\end{array}\right).\]

The claim follows from the entries of the first row of $R$.

\end{proof}
In particular, setting $\tau_1=0$, we see that the number of edges $L_t^{(0)}$ asymptotically determines $L_t^{(\tau)}$ for all $\tau$, More precisely, we have
\begin{equation}\label{eq:stochconv-dense}
\tilde L_t^{(\tau)} = d \tilde L_t^{(0)}/a + D_t^{(\tau)} \mbox{ with some }
D_t^{(\tau)} \stackrel{\mathds P}{\to} 0.
\end{equation}

If $ Y \sim \Nc(0, \,\Sigma) $
with
$\Sigma $ a positive definite $2 \times 2$-matrix (i.e. in the critical and subcritical regime), then for the Cholesky factor $G$ with $\Sigma = GG^t $ the equation $G^{-1} \Sigma (G^{-1})^{t} = I_2
$ holds. So \[G^{-1} Y = ( Z_1, Z_2),\] where the $Z_i$ are independent $\Nc (0,1)$-random variables. If $Y= \lim_{t \rightarrow \infty} (\tilde L_t^{(0)}, \tilde L_t^{(\tau)}),$ then $G^{-1} Y = \lim_{t \rightarrow \infty} (b_0 \tilde L_t^{(0)}, c_0 \tilde L_t^{(0)} + c_{\tau} \tilde L_t^{(\tau)})
.$ This leads in particular to:

\begin{thm}\label{thm:stochconv-(sub)crit}
Assume that $t\delta_t^d \rightarrow c \in [0, \infty) $, $\tau > -d/2$, and that $Z\sim \mathcal N(0,1)$ is independent of $\tilde L_t^{(0)}$. Then 
\begin{equation} \label{eq:stochconv-subcritical}
\tilde L_t^{(\tau)} = \frac{d}{a}\tilde L_t^{(0)} 
+ \frac{\tau \sqrt{V(W) d \kappa_d}}{a\sqrt{4x\max\{c,1\}}} Z 
+ D_t^{(\tau)} 
\mbox{ with some }
D_t^{(\tau)} \stackrel{\mathds P}{\to} 0 ,
\end{equation}
as $t \to \infty$.
\end{thm}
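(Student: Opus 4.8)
The plan is to reduce the claim to the two-dimensional case, feed the known asymptotic normality into the explicit Cholesky factor of the limit covariance, and read off the two coefficients.

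\textbf{Setup.}
I would take $n=2$ with $\tau_1=0$ and $\tau_2=\tau$, so that $a_1=d$, $x_1=d/2$, $a_2=a$, $x_2=x$. By \cite[Theorem 5.2.]{Reitzner} the pair $(\tilde L_t^{(0)},\tilde L_t^{(\tau)})$ is asymptotically normal with covariance matrix $\Sigma_2$; here $\Sigma_2=\Sigma_2^{\text{sb}}$ when $c=0$, and $\Sigma_2=\Sigma_2^{\text{cr}}$ (with its two sub-cases $c\in(0,1]$ and $c\in(1,\infty)$) when $c>0$, and in every case $\Sigma_2$ is positive definite by Corollaries \ref{Kor: Det. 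Kov.matrix subkrit. Reg.} and \ref{Satz: pos. Def., Inv., Det. krit. Reg.}. Let $\Sigma_2=GG^t$ be its Cholesky decomposition from Algorithm \ref{Algo: Korrektheit Cholesky Alg. pos. semidef.}, with $G$ lower triangular, diagonal entries $g_{11},g_{22}>0$, and off-diagonal entry $g_{21}$.

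\textbf{Pushing the limit through $G^{-1}$.}
Since $G^{-1}\Sigma_2(G^{-1})^t=I_2$, the continuous mapping theorem gives $G^{-1}(\tilde L_t^{(0)},\tilde L_t^{(\tau)})^t\to(Z_1,Z_2)$ in distribution, where $Z_1,Z_2$ are independent $\mathcal N(0,1)$ variables. As $G^{-1}$ is again lower triangular, the first coordinate of this vector equals $\tilde L_t^{(0)}/g_{11}$ and the second equals $W_t:=\tfrac{1}{g_{22}}\tilde L_t^{(\tau)}-\tfrac{g_{21}}{g_{11}g_{22}}\tilde L_t^{(0)}$, so that
\[
\tilde L_t^{(\tau)}=\frac{g_{21}}{g_{11}}\,\tilde L_t^{(0)}+g_{22}\,W_t ,
\]
and the joint convergence $(\tilde L_t^{(0)}/g_{11},\,W_t)\to(Z_1,Z_2)$ shows that the second summand becomes asymptotically independent of $\tilde L_t^{(0)}$.

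\textbf{Identifying the constants and concluding.}
Substituting $x_1=d/2$, $x_2=x$, $a_2=a$ into the Cholesky formula of Corollary \ref{Satz: Cholesky-Zerleg. subkrit. Reg.} (case $c=0$), respectively into Example \ref{Bsp.Cholesky krit. Reg.} together with Theorem \ref{Satz: Cholesky-Zerleg. krit. Reg. Zsmhg. superkrit und krit. Reg.} (case $c>0$), a direct computation gives $g_{21}/g_{11}=d/a$ in all three cases and
\[
g_{22}=\frac{\tau\sqrt{V(W)d\kappa_d}}{a\sqrt{4x\max\{c,1\}}} ,
\]
the additional factor $\sqrt c$ in the denominators of $G$ for $c\in(1,\infty)$ being exactly what upgrades the value $\tau\sqrt{V(W)d\kappa_d}/(a\sqrt{4x})$ valid for $c\in[0,1]$ to this expression (the sign of $\tau$ is irrelevant, being absorbed into the symmetric law of $Z$). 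Passing to a probability space on which $(\tilde L_t^{(0)}/g_{11},W_t)\to(Z_1,Z_2)$ holds almost surely via the Skorokhod representation, one sets $Z:=Z_2$ and $D_t^{(\tau)}:=g_{22}(W_t-Z_2)$; then $D_t^{(\tau)}\to 0$ almost surely, hence in probability, $Z\sim\mathcal N(0,1)$ is independent of the limit of $\tilde L_t^{(0)}$, and the displayed identity becomes \eqref{eq:stochconv-subcritical}.

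I expect the main obstacle to be twofold. The first is the bookkeeping needed to verify the two coefficients \emph{simultaneously} across the subcritical and the two critical sub-regimes, which requires the correct specialisation of the general Cauchy-matrix Cholesky factor and the rank-one comparison of Theorem \ref{Satz: Cholesky-Zerleg. krit. Reg. Zsmhg. superkrit und krit. Reg.}. The second, and more conceptual, point is the last step: plain convergence in distribution of $W_t$ does not by itself produce a fixed $Z$ with $D_t^{(\tau)}\to 0$ in probability, so the coupling furnished by the Skorokhod representation (or an equivalent device) is what makes the statement rigorous.
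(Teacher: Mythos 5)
Your proposal follows essentially the same route as the paper's proof: reduce to the pair $(\tilde L_t^{(0)},\tilde L_t^{(\tau)})$, invoke its asymptotic normality from \cite[Theorem 5.2.]{Reitzner}, apply the inverse Cholesky factor of $\Sigma_2$ (Example \ref{Bsp: Cholesky-Zerleg. subkrit. Reg.} in the subcritical case, Example \ref{Bsp.Cholesky krit. Reg.} in the critical case), and read off $g_{21}/g_{11}=d/a$ and $g_{22}=\tau\sqrt{V(W)d\kappa_d}/\bigl(a\sqrt{4x\max\{c,1\}}\bigr)$. Your closing observation that plain distributional convergence of $W_t$ must be upgraded (e.g.\ via a Skorokhod coupling) to obtain a fixed $Z$ with $D_t^{(\tau)}\stackrel{\mathbb{P}}{\to}0$ is a point the paper's proof passes over silently, but it is a refinement of the same argument rather than a different approach.
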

\begin{proof}

In the subcritical regime and using Example \ref{Bsp: Cholesky-Zerleg. subkrit. Reg.} we see for the Cholesky factor $G$ of $\Sigma_2$, that
\[G^{-1}=\sqrt{\frac{2}{V(W)d\kappa_d}}\left( \begin{array}{cccc}   
\sqrt{d}& 0 \\
\frac{-d\sqrt{2x}}{\tau}& \frac{a\sqrt{2x}}{\tau} \\
\end{array}\right).\]
Thus, $G^{-1} Y = \sqrt{\frac{2}{V(W)d\kappa_d}}\lim_{t \rightarrow \infty} (\sqrt{d} \tilde L_t^{(0)}, \frac{-d\sqrt{2x}}{\tau} \tilde L_t^{(0)} + \frac{a\sqrt{2x}}{\tau} \tilde L_t^{(\tau)})^t
$, which implies the claim immediately.  In the critical regime we observe from Example \ref{Bsp.Cholesky krit. Reg.} for $c \in (0,1]$ that
\[G^{-1}= \sqrt{\frac{2}{V(W)d\kappa_d}}\left( \begin{array}{cccc}   
\sqrt{\frac{d}{2\kappa_d c+1}}& 0 \\
-\frac{d}{a}\sqrt{\frac{2xa^2}{\tau^2}}& \sqrt{\frac{2xa^2}{\tau^2}}\\
\end{array}\right).\] 
Hence, $G^{-1} Y=\sqrt{\frac{2}{V(W)d\kappa_d}} \lim_{t \rightarrow \infty}  (\sqrt{\frac{d}{2\kappa_d c+1}}\tilde L_t^{(0)}, -\frac{d}{a}\sqrt{\frac{2xa^2}{\tau^2}} \tilde L_t^{(0)} +  \sqrt{\frac{2xa^2}{\tau^2}}\tilde L_t^{(\tau)})^t.$

For $c  \in (1, \infty)$ according to Example \ref{Bsp.Cholesky krit. Reg.} a similar computation leads to the same results with an additional $c$ in the nominator of all entries of $2/V(W)d\kappa_d$. This shows the claim in the critical regime for both cases for $c$.
\end{proof}
It would be of high interest to determine the order of $D_t^{(\tau)}$ 
in Theorems \ref{thm:stochconv-dense} and \ref{thm:stochconv-(sub)crit} as $t \to \infty$, or even its asymptotic 
distribution. This seems to be out of reach at the moment.

\section{Outlook} \label{Kap: Resumee}

Future research activities start by studying open problems from Tables \ref{Tab: first results in all regimes} and \ref{Tab: decompositions in all regimes} for a complete analysis of $\Sigma_n$ with respect to  all algebraic aspects considered before.  For example, though we known in the  critical and subcritical regime the characteristic polynomial, in general, we have no formula for the eigenvalues. A detailed analysis of them would be of interest. For this, a possible strategy is to use matrix decompositions and related methods as discussed in this manuscript. Of course such decompositions are also interesting in their own right and we would like to know more of them like Singular value or QR decompositions. Another example for a research goal is related to the bounds for eigenvalues of $\Sigma_n$ described in Theorem \ref{Satz: Schranken EWe Sigma subkrit. Reg.} and Theorem \ref{Satz: Schranken EWe krit. Reg.}. There are not tight and should be improved.

Stochastic research objectives are related to generalizations
of random geometric graphs. As already observed before, they are a special case of random geometric (simplicial) complexes. Moreover, other Poisson functionals must be considered. In particular, volume power functionals (see \cite{AkinwandeReitzner}) and their covariance matrices will be studied in the future as well as $k$-simplex counting functionals.

\bigskip \medskip

\noindent
\footnotesize {\bf Authors' addresses:}

\smallskip



\noindent 
Institute of Mathematics,
Osnabr\"uck University,
49069 Osnabr\"uck,
Germany\\
{\tt mreitzne@uos.de},\hspace{5 pt}
{\tt troemer@uos.de},\hspace{5 pt}
{\tt mvonwestenho@uos.de}

\end{document}